\documentclass[twoside,a4paper,reqno,11pt]{amsart} 
\usepackage{amsfonts, amsbsy, amsmath, amssymb, latexsym,hyperref, mathtools, bold-extra, url}
\usepackage{mathrsfs,array}
\usepackage[top=24mm,right=28mm,bottom=24mm,left=28mm]{geometry}
\usepackage{stmaryrd}
\usepackage{bm}
\usepackage[pdftex]{color,graphicx}

\renewcommand{\a}{\alpha}
\renewcommand{\b}{\beta}
\newcommand{\normeq}{\trianglelefteqslant}

\newcommand{\e}{\epsilon}
 \renewcommand{\L}{\Lambda}
\renewcommand{\l}{\lambda} \renewcommand{\O}{\Omega}

 \renewcommand{\to}{\rightarrow}

 \newcommand{\C}{\mathcal{C}}

\newcommand{\la}{\langle}
\newcommand{\ra}{\rangle}

\newcommand{\leqs}{\leqslant}
\newcommand{\geqs}{\geqslant}
 
\newcommand{\what}{\widehat} 
 \newcommand{\vs}{\vspace{3mm}}

\makeatletter
\newcommand{\imod}[1]{\allowbreak\mkern4mu({\operator@font mod}\,\,#1)}
\makeatother

\newtheorem{theorem}{Theorem} 
\newtheorem*{theorem*}{Theorem} 
\newtheorem*{conj*}{Conjecture}

\newtheorem{corol}[theorem]{Corollary}

\newtheorem{thm}{Theorem}[section] 
\newtheorem{prop}[thm]{Proposition} 
\newtheorem{lem}[thm]{Lemma}

\theoremstyle{definition}
\newtheorem{rem}[thm]{Remark}
\newtheorem{remk}{Remark}

\newtheorem*{deff}{Definition}
\newtheorem{defn}[thm]{Definition}
\newtheorem{ex}[thm]{Example}

\begin{document}

\author{Timothy C. Burness}
\address{T.C. Burness, School of Mathematics, University of Bristol, Bristol BS8 1UG, UK}
\email{t.burness@bristol.ac.uk}

\author{Lei Wang}
\address{L. Wang, School of Mathematics and Statistics, Yunnan University, Kunming 650091, Yunnan, P.R. China}
\email{wanglei@ynu.edu.cn}

\title[Regularity of irreducible subgroups]{On the regularity of irreducible subgroups \\ of finite classical groups}

\begin{abstract}
Let $G$ be a finite group and let $\tau = (H_1, \ldots, H_t)$ be a $t$-tuple of core-free subgroups of $G$. We say that $\tau$ is regular if $G$ contains elements $g_1, \ldots, g_t$ such that $\bigcap_i H_i^{g_i} = 1$, which is equivalent to the existence of a regular $G$-orbit on the Cartesian product $G/H_1 \times \cdots \times G/H_t$. Regular tuples were first investigated by Anagnostopoulou-Merkouri and Burness in a paper from 2024, partly motivated by the aim of seeking a natural generalisation of the classical and widely studied concept of a base for a transitive permutation group, which aligns with the special case where the $H_i$ are pairwise conjugate subgroups. In this paper, we focus on the case where $G$ is a finite almost simple classical group and each $H_i$ is a maximal subgroup contained in Aschbacher's collection $\mathcal{S}$ of irreducibly embedded subgroups. Our main theorem determines all the non-regular $t$-tuples of this form with $t \geqs 2$, which extends earlier work by Burness, Guralnick and Saxl in the base size setting. In particular, we deduce that every pair of maximal subgroups in $\mathcal{S}$ is regular if $n \geqs 15$, where $n$ is the dimension of the natural module for the socle of $G$, and this lower bound is best possible.
\end{abstract}

\date{\today}

\maketitle

\section{Introduction}\label{s:intro}

Let $G$ be a finite group, let $t \geqs 2$ be an integer and let $H_1, \ldots, H_t$ be a collection of core-free subgroups of $G$, allowing repetitions. Consider the natural action of $G$ on the Cartesian product 
\[
\Gamma = G/H_1 \times \cdots \times G/H_t
\]
and observe that $G$ has a regular orbit on $\Gamma$ if and only if  
\[
\bigcap_{i=1}^t H_i^{g_i} = 1
\]
for some elements $g_i \in G$. Following \cite{AB}, we make the following definition.

\begin{deff}
A $t$-tuple $\tau = (H_1, \ldots, H_t)$ of core-free subgroups of $G$ is \emph{regular} if $G$ has a regular orbit on $\Gamma$, and \emph{non-regular} otherwise. 
\end{deff}

For $t = 2$ or $3$, we will refer to regular and non-regular pairs or triples, respectively. Clearly, the regularity of a given tuple does not depend on the ordering of the subgroups in the tuple. In addition, $\tau = (H_1, \ldots, H_t)$ is regular if and only if $(H_1^{g_1}, \ldots, H_t^{g_t})$ is regular for some $g_i \in G$, so we are free to replace each $H_i$ by any conjugate. It will be convenient to say that $\tau$ is a \emph{conjugate} tuple if each $H_i$ is conjugate to $H_1$. 

Recall that if $G \leqs {\rm Sym}(\O)$ is a transitive permutation group on a finite set $\O$ with point stabiliser $H = G_{\a}$, then a subset of $\O$ is a \emph{base} for $G$ if its pointwise stabiliser in $G$ is trivial. We write $b(G,H)$ for the minimal size of a base, which is called the \emph{base size} of $G$. Determining the base size of a given group is a fundamental problem in permutation group theory, and this classical invariant has been widely studied for many decades, finding an extensive range of applications and connections to other areas (for example, see the survey articles \cite{BC} and \cite[Section 5]{B180}, and the references therein). The connection with subgroup regularity is transparent since we have $b(G,H) \leqs t$ if and only if $G$ has a regular orbit on $(G/H)^t$. In other words, $b(G,H) \leqs t$ if and only if the conjugate $t$-tuple $\tau = (H, \ldots, H)$ is regular. 

Given the extensive literature on bases for finite permutation groups, it is natural to study the possibility of extending some of the main results to the more general regularity setting. Indeed, this was one of the main motivations for introducing the concept of subgroup regularity in \cite{AB}, where several results in this direction are established for almost simple groups.

In order to set the scene for the main results of this paper, first recall that a finite group $G$ is \emph{almost simple} if there exists a nonabelian simple group $G_0$ such that
\[
G_0 \normeq G \leqs {\rm Aut}(G_0).
\]
Here $G_0$ is the unique minimal normal subgroup of $G$, so it coincides with the socle of $G$ (and we may identify $G_0$ with its group of inner automorphisms). Now suppose $G \leqs {\rm Sym}(\O)$ is an almost simple primitive permutation group with point stabiliser $H$, so $H$ is a core-free maximal subgroup of $G$. Roughly speaking, we say that $G$ is \emph{non-standard} if one of the following holds:
\begin{itemize}\addtolength{\itemsep}{0.2\baselineskip}
\item[{\rm (a)}] $G_0 = A_n$ is an alternating group and $H \cap G_0$ acts primitively on $\{1, \ldots, n\}$;
\item[{\rm (b)}] $G_0$ is a classical group and $H \cap G_0$ acts irreducibly on the natural module for $G_0$;
\item[{\rm (c)}] $G_0$ is an exceptional group of Lie type or a sporadic simple group.
\end{itemize}
Otherwise, we say that $G$ is \emph{standard}, and we refer the reader to \cite[Definition 1.1]{B07} for the formal definition. 

It is easy to see that the base size of a standard group can be arbitrarily large. For example, we have $b(G,H) = n-1$ for the natural action of $G = S_n$ on $\O = \{1, \ldots, n\}$. However, 
a well known conjecture of Cameron from the 1990s (see \cite[p.122]{Cam}) asserts that $b(G,H) \leqs 7$ for every non-standard group, with equality if and only if $G$ is the Mathieu group ${\rm M}_{24}$ in its natural action on $24$ points. A weaker  version of Cameron's conjecture was proved by Liebeck and Shalev in \cite{LSh2}, where they established an upper bound $b(G,H) \leqs c$ for some undetermined absolute constant $c$. The precise form of the conjecture was established in later work by Burness et al. \cite{B07,BGS0,BLS,BOW}, working extensively with a powerful probabilistic approach for bounding the base size of a finite permutation group, which was originally introduced in \cite{LSh2}.

Given an almost simple group $G$, let us say that a core-free maximal subgroup $H$ of $G$ is \emph{non-standard} if the primitive action of $G$ on $G/H$ is non-standard as defined above. Then Cameron's conjecture asserts that every conjugate $7$-tuple of non-standard subgroups of $G$ is regular, and every conjugate $6$-tuple $\tau$ is regular, unless $G = {\rm M}_{24}$ and $\tau = (H, \ldots, H)$ with $H = {\rm M}_{23}$. Given the verification of Cameron's conjecture, it is natural to ask whether or not the ``conjugate" condition in the original formulation can be relaxed, which leads to \cite[Conjecture 1(ii)]{AB}:

\begin{conj*}[Anagnostopoulou-Merkouri \& Burness, 2024]
Let $G$ be a finite almost simple group with socle $G_0$. Then every $7$-tuple of non-standard subgroups of $G$ is regular.
\end{conj*}

In \cite{AB}, a strong version of this conjecture is proved in the special cases where $G_0$ is an alternating or a sporadic group (see \cite[Theorem 1(ii) and Theorem 2(i)]{AB}). And the problem for groups of Lie type is addressed in \cite{A}, where  Anagnostopoulou-Merkouri completes the proof of the conjecture. In particular, she shows that every $6$-tuple $\tau$ of non-standard subgroups is regular, unless $G = {\rm M}_{24}$ and $\tau = (H, \ldots, H)$ with $H = {\rm M}_{23}$. Moreover, if $G$ is a classical group, then \cite{A} shows that every $5$-tuple is regular, which provides a natural generalisation of the main theorem of \cite{B07} on base sizes for non-standard classical groups.

In this paper, we will focus on subgroup regularity for classical groups. In order to define the problem, let $G$ be a finite almost simple classical group over $\mathbb{F}_q$ with socle $G_0$. Write $q = p^f$, where $p$ is a prime, and let $V$ be the natural module for $G_0$. Set $n = \dim V$. In view of the existence of exceptional isomorphisms among certain low-dimensional classical groups (see \cite[Proposition 2.9.1]{KL}), we may (and will) assume throughout this paper that $G_0$ is one of the following:
\begin{equation}\label{e:groups}
\begin{array}{ll}
\mbox{Linear:} \; & \hspace{4.4mm} {\rm L}_n(q), \; \mbox{$n \geqs 2$} \\
\mbox{Unitary:} \; & \hspace{4.4mm} {\rm U}_n(q), \; \mbox{$n \geqs 3$} \\
\mbox{Symplectic:} \; & \hspace{4.4mm} {\rm PSp}_n(q), \; \mbox{$n \geqs 4$ even, $(n,q) \ne (4,2)$} \\
\mbox{Orthogonal:} \; & \left\{ \begin{array}{l}
\O_n(q),\; \mbox{$n \geqs 7$, $nq$ odd} \\
{\rm P\O}_{n}^{\pm}(q),\; \mbox{$n \geqs 8$ even,}
\end{array}\right.
\end{array}
\end{equation}
where we adopt the notation from \cite{KL}. For example, we may assume $(n,q) \ne (4,2)$ when $G_0$ is a symplectic group since there is an isomorphism ${\rm PSp}_4(2)' \cong {\rm L}_2(9)$.

For now, let us exclude the groups with $G_0 = {\rm Sp}_4(q)$ (for $q$ even) or $G_0 = {\rm P\O}_8^{+}(q)$, which require special attention. If $H$ is a core-free maximal subgroup of $G$, then a celebrated theorem of  Aschbacher \cite{asch} implies that $H$ either belongs to one of eight subgroup collections defined in terms of the geometry of $V$ (denoted by $\mathcal{C}_1, \ldots, \mathcal{C}_8$), or $H$ is almost simple and acts irreducibly on $V$. The latter collection of irreducibly embedded almost simple subgroups is denoted by $\mathcal{S}$, and each subgroup in $\mathcal{S}$ satisfies several additional properties (see Definition \ref{sdef}) to ensure that it is not contained in one of the geometric subgroup collections arising in the statement of Aschbacher's theorem. We refer the reader to Remarks \ref{r:S} and \ref{r:S2} for clarification on the precise definition of $\mathcal{S}$ we adopt in this paper, which includes the special cases where $G_0 = {\rm Sp}_4(q)$ (with $q$ even) or $G_0 = {\rm P\O}_8^{+}(q)$.

There is an extensive literature on the maximal subgroups of classical groups, stretching all the way back to the 19th century. In \cite{KL}, for example, Kleidman and Liebeck give detailed structural information on the subgroups in each $\C_i$ collection and they determine all the geometric maximal subgroups (up to conjugacy) under the assumption $n \geqs 13$. This is complemented by more recent work of Bray, Holt and Roney-Dougal \cite{BHR}, which completely determines all the maximal subgroups (up to conjugacy, and including the subgroups in $\mathcal{S}$) when $n \leqs 12$. The problem of determining the maximal subgroups in $\mathcal{S}$ for all $n \geqs 13$ is intractable, in general, but there are results for $13 \leqs n \leqs 17$ in the PhD theses of Rogers \cite{Rogers} and Schr\"{o}der \cite{Schroder}.
 
By the main theorem of \cite{B07}, if $G \leqs {\rm Sym}(\O)$ is a non-standard classical group with point stabiliser $H$, then $b(G,H) \leqs 5$, with equality if and only if $G = {\rm U}_6(2).2$ and $H = {\rm U}_4(3).2^2$. More refined versions of this result have been obtained in certain special cases. For example, the exact base size is computed in \cite{B21} in the special case where $H$ is soluble, which plays a key role in establishing a conjecture of Vdovin on the base sizes of primitive permutation groups with soluble point stabilisers. And in \cite{BGS}, Burness, Guralnick and Saxl determine the exact base size in the special case where $H$ is contained in Aschbacher's collection $\mathcal{S}$. 

Our goal in this paper is to extend the main theorem of \cite{BGS} by completely determining all the non-regular tuples of subgroups in $\mathcal{S}$. Our main result is the following (note that Tables \ref{tab:3} and \ref{tab:2} are presented at the end of the paper in Section \ref{s:tables}). 

\begin{theorem}\label{t:main}
Let $G$ be a finite almost simple classical group over $\mathbb{F}_q$ with socle $G_0$ and let $\tau$ be a $t$-tuple of maximal subgroups in $\mathcal{S}$ with $t \geqs 2$.
\begin{itemize}\addtolength{\itemsep}{0.2\baselineskip}
\item[{\rm (i)}] If $t \geqs 5$ then $\tau$ is regular.
\item[{\rm (ii)}] If $t=4$, then $\tau$ is non-regular if and only if $G = {\rm U}_6(2).2$ and $\tau = (H,H,H,H)$ with $H = {\rm U}_4(3).2^2$.
\item[{\rm (iii)}] If $t=3$, then $\tau$ is non-regular if and only if one of the following holds:

\vspace{1mm}

\begin{itemize}\addtolength{\itemsep}{0.3\baselineskip}
\item[{\rm (a)}] $G_0 = \O_7(q)$ and $\tau = (H_1,H_2,H_3)$, where each $H_i$ has socle $G_2(q)$.
\item[{\rm (b)}] $(G,\tau)$ is one of the cases recorded in Table \ref{tab:3}.
\end{itemize}

\vspace{1mm}

\item[{\rm (iv)}] If $t=2$ then $\tau$ is non-regular if and only if one of the following holds:

\vspace{1mm}

\begin{itemize}\addtolength{\itemsep}{0.3\baselineskip}
\item[{\rm (a)}] $G_0 = \O_7(q)$ and $\tau = (H_1,H_2)$, where each $H_i$ has socle $G_2(q)$.
\item[{\rm (b)}] $G_0 = {\rm Sp}_4(q)$, $q = 2^{f}$ with $f \geqs 3$ odd, and $\tau = (H,H)$, where $H_0 = {}^2B_2(q)$.
\item[{\rm (c)}] $(G,\tau)$ is one of the cases recorded in Table \ref{tab:2}.
\end{itemize}
\end{itemize}
\end{theorem}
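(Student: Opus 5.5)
The plan is to use the standard probabilistic method for bases, adapted to tuples. For a $t$-tuple $\tau=(H_1,\ldots,H_t)$, let $Q(G,\tau)$ be the probability that a random element of $\Gamma = G/H_1\times\cdots\times G/H_t$ is not in a regular $G$-orbit. A random point $(H_1g_1,\ldots,H_tg_t)$ is fixed by some nontrivial element if and only if it is fixed by an element of prime order. So we get
\[
Q(G,\tau) \leqs \sum_{x \in \mathcal{P}} \prod_{i=1}^t {\rm fpr}(x,G/H_i) = \sum_{x\in\mathcal{P}}\prod_{i=1}^t \frac{|x^G\cap H_i|}{|x^G|},
\]
where $\mathcal{P}$ is the set of elements of prime order in $G$. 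The tuple $\tau$ is regular whenever $Q(G,\tau)<1$.

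First we reduce parts (i) and (ii) to the case $t\leqs 4$. A regular $t$-tuple stays regular when we add further subgroups, and the conjugate case is governed by the base size theorem of \cite{BGS}. For general tuples, Hölder's inequality gives
\[
\sum_x\prod_i {\rm fpr}(x,G/H_i) \leqs \prod_i\Big(\sum_x {\rm fpr}(x,G/H_i)^t\Big)^{1/t}.
\]
The right-hand side is built from the quantities $\widehat{Q}(G,H_i,t)=\sum_x {\rm fpr}(x,G/H_i)^t$. These are exactly the quantities bounded in \cite{B07,BGS} to prove $b(G,H)\leqs t$ via $\widehat{Q}<1$. So whenever those papers prove $b(G,H_i)\leqs t$ with $\widehat{Q}(G,H_i,t)<1$ for every $i$, the mixed tuple is regular. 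This handles every case except the finitely many groups where $b(G,H)$ is large or where the estimate in \cite{BGS} is not strict. Those leftover cases are all low-dimensional with small $q$ (for example $G={\rm U}_6(2).2$), and we settle them with {\sc Magma}, using the maximal subgroups listed in \cite{BHR}.

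For $t=2,3$ in general dimension, we use the bound on $|H|$ for subgroups $H$ in $\mathcal{S}$ (Liebeck's bound $|H|<q^{3n}$, refined when $H$ is not a small-dimensional representation). We combine it with lower bounds $|x^G|\geqs c\,q^{ns}$, where $s$ depends on the support or fixed space of $x$, and with a count of the prime-order elements of $H_i$. For $n\geqs 15$ this gives $Q(G,(H_1,H_2))<1$ directly, using the Hölder step with $t=2$. This already shows that every pair is regular in that range. For $n\leqs 14$ we go through the explicit lists of $\mathcal{S}$-subgroups in \cite{BHR,Rogers,Schroder} case by case. For each possible socle of $H_1$ and $H_2$ we bound ${\rm fpr}$ using the known class lists. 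In the remaining small cases we compute the double coset decomposition $H_1\backslash G/H_2$, or do a random search for $g$ with $H_1\cap H_2^g=1$. Any tuples for which the computation proves non-regularity go into Tables \ref{tab:3} and \ref{tab:2}.

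The infinite families need structural arguments. In case (a), with $G_0=\O_7(q)$ and each $H_i$ having socle $G_2(q)$, non-regularity should follow from the geometry of $G_2(q)$ as the stabiliser of a nondegenerate $1$-space in the $8$-dimensional spin module for ${\rm Spin}_7$, or equivalently from the ${\rm P\O}_8^+$ triality picture. Three such subgroups always intersect in a nontrivial subgroup: intersecting two stabilisers of nonsingular vectors gives at least ${\rm SL}_2$ or ${\rm SU}_3$-type structure, and a third stabiliser still leaves a nontrivial torus or unipotent element. I expect this to follow from a dimension and order count, $|G|<|H|^3/|\text{intersection}|$, together with the explicit stabilisers. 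In case (b), with $G_0={\rm Sp}_4(q)$ and $H_0={}^2B_2(q)$, non-regularity follows from $|H|^2>|G|$ when $G$ contains field automorphisms. Conversely, regularity of all the other pairs in (iv)(b), and of the triples there, comes from exhibiting elements $g$ for which the intersection is trivial, using order counting. The main obstacle, I expect, is the middle range $7\leqs n\leqs 14$ with small $q$, and especially mixed pairs where $H_1\neq H_2$. There the crude fpr bounds fail, and we must use precise conjugacy class fusion from $H_i$ into $G$, which is a lengthy case-by-case computation. I would attack it first with character-theoretic fixed point ratios, ${\rm fpr}(x)=1_{H}^G(x)/1_H^G(1)$, and fall back on {\sc Magma} where that is not enough.
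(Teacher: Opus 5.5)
The H\"older step is correct. Since the geometric mean is at most the arithmetic mean, it is even slightly sharper than the paper's Lemma~\ref{l:key}. The overall plan is also the paper's: reduce to the single-subgroup quantities $\widehat{Q}(G,H,t)$, then treat the exceptions by hand and with {\sc Magma}.

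\textbf{The gap.} You claim that the cases where this reduction fails are finitely many small groups, and this is false. Take $G=\O_7(p)$ with $p\geqs 5$ prime, $H=G_2(p)$ and $K={\rm Sp}_6(2)$.
\begin{itemize}
\item Since $b(G,H)=4$, we have $\widehat{Q}(G,H)\geqs 1$, and in fact it is at least of order $p^6$. For $p\geqs 7$, the roughly $p^{12}$ regular unipotent elements of $H$ act on $V$ as $J_7$, and the $G$-class of such an element has size of order $p^{18}$.
\item By contrast, $\widehat{Q}(G,K)$ is only of order $p^{-6}$; its leading term comes from the $63$ involutions acting as $(-I_6,I_1)$.
\end{itemize}
So neither your H\"older bound nor the arithmetic-mean bound is ever below $1$. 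Yet Theorem~\ref{t:main}(iv) requires showing that $(H,K)$ is regular for every prime $p\geqs 11$. This is an infinite family that {\sc Magma} cannot settle.

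The missing idea is to estimate $\widehat{Q}(G,\tau)$ for the mixed pair directly. Only classes meeting both $H$ and $K$ contribute, so only elements of order $2,3,5,7$ matter. The mean inequalities, on the other hand, charge you for every class meeting $H$. Lemma~\ref{l:n7} does exactly this for $p\geqs 13$:
\begin{itemize}
\item it bounds $i_r(H)$ and reads $|x^G|$ off the character table of $K$;
\item it computes the order-$3$ contribution exactly, from the fusion of the classes \texttt{3B}, \texttt{3C} of $K$ and the classes of $G_2(p)$ with centralisers ${\rm SL}_3^{\e}(p)$ and ${\rm GL}_2^{\e}(p)$. The paper says a crude bound is not good enough here.
\end{itemize}
The case $p=11$ is checked by computer. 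For $p\in\{5,7\}$ the pair is in fact non-regular, since $|H||K|>|G|$.

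\textbf{The leftover list must be derived, not read off from \cite{BGS}.} Pinning it down is the content of Theorem~\ref{t:hat}, and that is new work.
\begin{itemize}
\item In \cite{BGS}, several base-two cases were settled by random search, by Breuer's computations, or with details omitted. Examples are ${\rm PSp}_6(q)$ in ${\rm P\O}_{14}^{\e}(q)$ and $\O_{13}(q)$, ${\rm Suz}$ in ${\rm U}_{12}(2)$ and ${\rm PSp}_{12}(3)$, ${\rm J}_3$ in ${\rm U}_9(2)$, ${\rm U}_5(2)$ in ${\rm PSp}_{10}(p)$, and $A_{10}$ in $\O_9(3)$.
\item There are ten pairs with $b(G,H)=2$ but $\widehat{Q}(G,H)\geqs 1$ (Table~\ref{tab:new}), such as $A_{16}<\O_{14}^{+}(2)$, $S_{14}<{\rm Sp}_{12}(2)$ and $\O_8^{+}(2).S_3<{\rm P\O}_8^{+}(5).S_3$. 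Their mixed pairs then have to be examined separately.
\item Your direct treatment of $n\geqs 15$ via $|H|<q^{3n}$ ignores the exceptions to Liebeck's bound, namely $H\in\mathcal{A}$ (for example $S_{n+2}<{\rm Sp}_n(2)$). There $|H|$ is far larger and transpositions have $\nu=1$, so you must reuse the specific fixed point ratio bounds of \cite{BGS}.
\end{itemize}

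\textbf{Smaller points.}
\begin{itemize}
\item In (iv)(b), the count $|H|^2>|G|$ fails for $G=G_0$, because $|H_0|^2/|G_0|=(q^2+1)/(q+1)^2<1$. You need $b(G,H)=3$ from \cite{BGS} there. Also, there are no ``other pairs'' in this case, since $\mathcal{S}$ is a single class.
\item In (a), no geometry is needed for $q$ odd: $|G:H_i|=\frac{1}{2}q^3(q^4-1)$ gives $|G:H_1||G:H_2||G:H_3|<|G|$ directly. For $q$ even, quote $b(G,H)=4$.
\item The conjugate case cannot be taken from \cite{BGS} verbatim. For $G={\rm L}_3(4).2^2$ and $H=A_6.2^2$ one has $b(G,H)=4$, not $3$.
\end{itemize}
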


\begin{corol}\label{c:main}
If $n \geqs 15$, then every pair of subgroups in $\mathcal{S}$ is regular.
\end{corol}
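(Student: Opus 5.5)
The corollary is a direct consequence of Theorem \ref{t:main}(iv), so the plan is to check that none of the non-regular pairs listed there occurs with $n \geqs 15$. There are three families to rule out, and I would take them in the order they appear in part (iv).

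First, case (a) has $G_0 = \O_7(q)$, and case (b) has $G_0 = {\rm Sp}_4(q)$. In both, $n \leqs 7$, so neither arises when $n \geqs 15$.

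Second, case (c) refers to Table \ref{tab:2}. I would go through its rows and record the dimension $n$ of the natural module in each. These rows are the finitely many small-dimensional configurations coming from the explicit maximal-subgroup lists in \cite{BHR} for $n \leqs 12$, together with the few further cases with $13 \leqs n \leqs 14$. The claim to verify is that every row has $n \leqs 14$.

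Third, with that check done, any pair $(H_1,H_2)$ of maximal subgroups in $\mathcal{S}$ with $n \geqs 15$ falls outside (iv)(a), (b) and (c). Theorem \ref{t:main}(iv) then says the pair is regular.

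The only step that needs any care is the inspection of Table \ref{tab:2}. In particular, one must confirm that the largest dimension appearing there is at most $14$. The abstract states that the bound $15$ is best possible, which indicates a table entry with $n = 14$, so this check is consistent with the sharpness claim.

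One further point concerns the word ``subgroups'' in the corollary. If it is meant to include non-maximal members of $\mathcal{S}$, the reduction is immediate: when $H_i \leqs M_i$ with $(M_1,M_2)$ regular, we have $H_1^{g_1}\cap H_2^{g_2} \leqs M_1^{g_1}\cap M_2^{g_2} = 1$. So regularity of all maximal pairs gives regularity of all such pairs.
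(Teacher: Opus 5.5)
Your main argument is correct, and it is how the paper obtains the corollary: read it off from Theorem \ref{t:main}(iv). Cases (a) and (b) have $n \in \{6,7\}$ and $n=4$. The largest dimension in Table \ref{tab:2} is $n=14$, attained by $G={\rm O}_{14}^{+}(2)$ with $\tau=(S_{16},S_{16})$, which is also the sharpness example in Remark \ref{r:main}(c).

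Your final paragraph should be dropped, because the reduction it calls immediate is not valid. The corollary, like Theorem \ref{t:main}, is about maximal subgroups in $\mathcal{S}$. Definition \ref{sdef} does not require maximality. A non-maximal subgroup satisfying its conditions need not lie in any maximal member of $\mathcal{S}$; it may be contained only in geometric maximal subgroups. For example, an absolutely irreducible embedding that is tensor-decomposable over $\mathbb{F}_q$ lands inside a geometric tensor-product subgroup. Theorem \ref{t:main} says nothing about pairs involving such subgroups. So the containment $H_1^{g_1}\cap H_2^{g_2} \leqs M_1^{g_1}\cap M_2^{g_2}$ only helps once you know that suitable maximal $M_i \in \mathcal{S}$ containing $H_i$ exist, and that is exactly what is unjustified.
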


\begin{remk}\label{r:main}
Some remarks on the statement of Theorem \ref{t:main} are in order.
\begin{itemize}\addtolength{\itemsep}{0.2\baselineskip}
\item[{\rm (a)}] The non-regular $t$-tuples arising in the statement of Theorem \ref{t:main} are described up to conjugacy and re-ordering. We refer the reader to Remark \ref{r:tables} for infomation on the notation we adopt in Tables \ref{tab:3} and \ref{tab:2}.

\item[{\rm (b)}] We allow $q$ to be even in parts (iii)(a) and (iv)(a), in which case $\O_7(q)$ should be viewed as ${\rm Sp}_6(q)$. Also note that if $q$ is odd, then $G$ has two conjugacy classes of maximal subgroups in $\mathcal{S}$ with socle $G_2(q)$ (see \cite[Table 8.40]{BHR}).

\item[{\rm (c)}] The existence of the non-regular pair $\tau = (S_{16},S_{16})$ for $G = {\rm O}_{14}^{+}(2)$ shows that the lower bound on $n$ in Corollary \ref{c:main} is optimal.

\item[{\rm (d)}] In the proof of Theorem \ref{t:main}, we identified an error in the statement of \cite[Theorem 1]{BGS}, which we take the opportunity to correct here. If $G = {\rm L}_3(4).2^2$ and $H = A_6.2^2$ then $b(G,H) = 4$ and thus $(H,H,H)$ is a non-regular triple (in \cite[Table 1]{BGS}, it is incorrectly stated that $b(G,H) = 3$).

\item[{\rm (e)}] Note that Theorem \ref{t:main} extends a special case of \cite[Theorem 3(iii)]{A}, which implies that every $5$-tuple of subgroups in $\mathcal{S}$ is regular.
\end{itemize}
\end{remk}

As commented above, probabilistic methods based on fixed point ratio estimates played a central role in the proof of Cameron's base size conjecture in \cite{B07,BGS0,BLS,BOW}, as well as some of the subsequent refinements in \cite{B18, B21,BGS}. As observed in \cite{AB}, a suitable modification of the probabilistic approach also applies in the more general regularity setting. Given a tuple $\tau = (H_1, \ldots, H_t)$  of core-free subgroups of $G$, the goal is to estimate the probability $Q(G,\tau)$ that a uniformly random element in the Cartesian product $\Gamma = G/H_1 \times \cdots \times G/H_t$  is not contained in a regular $G$-orbit, so $\tau$ is regular if and only if $Q(G,\tau)<1$. In order to estimate this probability, we have \cite[Lemma 2.1]{AB}, which states that
\[
Q(G,\tau) \leqs \sum_{i=1}^{\ell} |x_i^G| \cdot \left(\prod_{j=1}^t {\rm fpr}(x_i,G/H_j)\right) =: \what{Q}(G,\tau),
\]
where $x_1, \ldots, x_{\ell}$ is a complete set of representatives of the conjugacy classes in $G$ of elements of prime order, and 
\[
{\rm fpr}(x,G/H) = \frac{|x^G \cap H|}{|x^G|}
\]
is the \emph{fixed point ratio} of $x \in G$ with respect to the natural transitive action of $G$ on $\O = G/H$ (this is simply the proportion of points in $\O$ fixed by $x$).

Let us consider a pair of core-free subgroups $\tau = (H_1,H_2)$. Firstly, it is worth noting that there are examples where $b(G,H_1) = b(G,H_2) = 2$ and $\tau$ is non-regular (so in such a situation, there exists $x_i \in G$ such that $H_i \cap H_i^{x_i} = 1$ for $i=1,2$, but $H_1 \cap H_2^y \ne 1$ for all $y \in G$). Similarly, there are examples where $\tau$ is regular and $b(G,H_i) \geqs 3$ for $i=1,2$; we refer the reader to Remark \ref{r:key} for more details. However, if we set $\what{Q}(G,H_i) = \what{Q}(G,\a_i)$ for $\a_i = (H_i,H_i)$, then it is straightforward to show that
\[
\what{Q}(G,\tau) \leqs \frac{1}{2}\left(\what{Q}(G,H_1) + \what{Q}(G,H_2)\right),
\]
which means that $\tau$ is regular if $\what{Q}(G,H_i)<1$ for $i=1,2$ (see Lemma \ref{l:key} for a more general statement). 

This key observation allows us to exploit the proof of the main theorem of \cite{BGS}, further underlining the power and applicability of the probabilistic approach for studying base sizes. Indeed, by suitably extending the proof of \cite[Theorem 1]{BGS}, we can establish Theorem \ref{t:hat} below, which effectively reduces the proof of Theorem \ref{t:main} to the low-dimensional classical groups appearing in \cite[Table 1]{BGS} and Table \ref{tab:new}. Fortunately, most of the remaining cases are amenable to direct computation using {\sc Magma} \cite{magma}, allowing us to work with some of the computational techniques for studying subgroup regularity introduced in \cite{AB_comp}, which played a key role in \cite{AB}. We will say more about our application of computational methods in Section \ref{ss:comp}. Recall that $\what{Q}(G,H) = \what{Q}(G,\a)$, where $\a = (H,H)$.

\begin{theorem}\label{t:hat}
Let $G$ be a finite almost simple classical group with socle $G_0$ and let $H \in \mathcal{S}$ be a maximal subgroup of $G$. Then $\widehat{Q}(G,H) \geqs 1$ if and only if one of the following holds:
\begin{itemize}\addtolength{\itemsep}{0.2\baselineskip}
\item[(i)] $b(G,H) \geqs 3$ and $(G,H)$ is recorded in \cite[Table 1]{BGS}.
\item[(ii)] $b(G,H) = 2$ and $(G,H)$ is recorded in Table \ref{tab:new}.
\end{itemize}
\end{theorem}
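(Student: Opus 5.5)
The plan is to reuse the probabilistic argument behind \cite[Theorem 1]{BGS}, keeping track of the quantity $\what{Q}(G,H)$ itself rather than only of the conclusion $b(G,H) \leqs 2$. One implication is immediate. If $b(G,H) \geqs 3$, then $\tau = (H,H)$ is non-regular, so $Q(G,\tau) = 1$, and therefore
\[
\what{Q}(G,H) \geqs Q(G,\tau) = 1 .
\]
Hence every case in \cite[Table 1]{BGS} with $b(G,H) \geqs 3$ satisfies $\what{Q}(G,H) \geqs 1$. This includes the corrected case ${\rm L}_3(4).2^2$ with $H = A_6.2^2$ from Remark \ref{r:main}(d). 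The real content is therefore twofold. First, we must show that $\what{Q}(G,H) < 1$ whenever $b(G,H) = 2$ and $(G,H)$ is not in Table \ref{tab:new}. Second, we must verify that $\what{Q}(G,H) \geqs 1$ for every entry of Table \ref{tab:new}.

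For the first part, I would split the argument according to $n$, following \cite{BGS}. In the large-dimensional case, I would apply the main fixed point ratio bound $\mathrm{fpr}(x,G/H) < |x^G|^{-1/2+1/n+\epsilon}$ from \cite{B07} (or its refinement used in \cite{BGS}). I would combine this with the standard estimate $\sum_{x} |x^G|^{-s}$ over classes of prime order elements, which is small once $s$ exceeds roughly $1/2 \cdot \dim$ ratio, together with the bound $|H| < q^{3n}$ or the Liebeck-type bound $|H| \leqs q^{2n+4}$ for $H \in \mathcal{S}$. These give
\[
\what{Q}(G,H) \leqs \sum_{i} |x_i^G|\, \mathrm{fpr}(x_i,G/H)^2 \leqs \sum_{i} |x_i^G \cap H|^2/|x_i^G| ,
\]
and the argument in \cite{BGS} already bounds this sum by $1$ explicitly for $n$ beyond a small threshold, apart from the listed exceptions. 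The point to check is that \cite{BGS} proved $b(G,H)=2$ in these ranges precisely by establishing $\what{Q}(G,H)<1$. Wherever \cite{BGS} instead used a different argument, I would go back and redo the estimate for $\what{Q}$ directly. Typical instances are a direct construction of a regular orbit, a {\sc Magma} base computation, or a bound on $Q$ that is sharper than $\what{Q}$.

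Those residual cases are all low-dimensional and appear in \cite[Table 1]{BGS} or in \cite{BHR} with $n \leqs 12$ (or small $q$). For each of them I would compute $\what{Q}(G,H)$ exactly. This requires the fusion of $H$-classes of prime order elements into $G$, which is available in {\sc Magma} via explicit matrix representations for small $q$. For general $q$, for example in the families with $H_0 = G_2(q)$, ${}^2B_2(q)$ or ${\rm L}_2(q)$, I would use known class sizes and write $\what{Q}$ as a rational function of $q$. The cases with $\what{Q}(G,H)\geqs 1$ but $b(G,H)=2$ are then exactly those collected in Table \ref{tab:new}.

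I expect the main obstacle to be this borderline bookkeeping. We need the exact set of pairs where $\what{Q}(G,H)$ crosses $1$, and that requires precise fixed point ratio data rather than upper bounds, particularly for infinite families such as ${\rm Sp}_4(q)$ with $H_0 = {}^2B_2(q)$ and for small $q$ where the asymptotic estimates fail. I would handle this first by explicit {\sc Magma} computation of class fusion for small $(n,q)$, and second by exact character-theoretic counts of $|x^G\cap H|$ for the parametrised families, checking monotonicity in $q$ beyond the computed range.
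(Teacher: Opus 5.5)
Your plan is essentially the paper's proof. The implication $b(G,H)\geqs 3 \Rightarrow \what{Q}(G,H)\geqs Q(G,H)=1$ is immediate; the rest re-examines the proof of \cite[Theorem 1]{BGS} dimension by dimension (relying on it wholesale for $n\geqs 15$) and redoes, via {\sc Magma} or class-by-class estimates in the style of Lemma \ref{l:favbound}, exactly the cases that \cite{BGS} settled by random search, by Breuer's computations \cite{Breuer}, or with details omitted, which is how Table \ref{tab:new} arises. One caution: the bound ${\rm fpr}(x,G/H)<|x^G|^{-1/2+1/n+\epsilon}$ from \cite{B07} is useless for pairs, since it only gives $|x^G|\,{\rm fpr}(x,G/H)^2<|x^G|^{2/n+2\epsilon}$, so the actual engine is the Guralnick--Saxl bound on $\nu(H)$ (Theorem \ref{gursax} with Proposition \ref{p:xgbd}) played against $|H|$ from \cite{BHR,Schroder}; also, the families ${}^2B_2(q)$ and $G_2(q)$ that you single out need no computation because $b(G,H)\geqs 3$ there, whereas the infinite families needing new work are ones such as $H_0={\rm PSp}_6(q)$ with $n\in\{13,14\}$ (Lemma \ref{l:sp6case}).
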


\renewcommand{\arraystretch}{1.1}

\begin{table} 
$$\begin{array}{llc} \hline
G & H & \what{Q}(G,H) \\ \hline 
\O_{14}^{+}(2) & A_{16} & 6.10 \\
{\rm Sp}_{12}(2) & S_{14} & 8.32 \\ 
{\rm P\O}_{8}^{+}(5).S_3 & \O_8^{+}(2).S_3 & 2.20 \\
{\rm U}_5(2).2 & {\rm PGL}_2(11) & 1.62 \\
{\rm U}_5(2) & {\rm L}_2(11) & 1.39 \\
{\rm U}_4(5).2_1 & {\rm U}_4(2).2 & 2.10 \\
{\rm PGSp}_4(7) & S_7 & 1.49 \\
{\rm PGSp}_4(5) & S_6 & 1.29 \\
{\rm L}_2(31) & A_5 & 1.18 \\ 
{\rm L}_2(29) & A_5 & 1.36 \\\hline
\end{array}$$
\caption{The pairs $(G,H)$ in Theorem \ref{t:hat}(ii)}
\label{tab:new}
\end{table}

\renewcommand{\arraystretch}{1}

Let us observe that the remarkably short list of special cases recorded in Table \ref{tab:new} coincides with the complete collection of base-two groups in this setting for which the probabilistic method, via fixed point ratio estimates, is not able to detect the base-two property.

\begin{remk}\label{r:2}
In the final column of Table \ref{tab:new}, we record $\what{Q}(G,H)$ to $3$ significant figures. In addition, we adopt the standard notation for the group $G = {\rm U}_4(5).2_1$ (this is consistent with \cite{GAPCTL}, for example), which indicates that $G$ contains an involutory graph automorphism $x$ with $C_{G_0}(x) = {\rm PGSp}_4(5)$.
\end{remk}

\vs

\noindent \textbf{Notation.} Our group-theoretic notation is all fairly standard. As noted above, we use the notation from \cite{KL} for simple groups of Lie type. And if $G$ is a finite group and $r$ is a positive integer, then we write $i_r(G)$ for the number of elements in $G$ of order $r$. Given a pair of positive integers $a,b$, we use the familiar Kronecker delta notation $\delta_{a,b}$, which means that $\delta_{a,b} = 1$ if $a=b$, otherwise $\delta_{a,b} = 0$. We also write $(a,b)$ for the greatest common divisor of $a$ and $b$, and if $q$ is a prime power then we set $Q = q/(q+1)$.

\vs

\noindent \textbf{Organisation.} In Section \ref{s:prel} we present a number of preliminary results, which will be needed in the proofs of Theorems \ref{t:main} and \ref{t:hat}. For example, in Section \ref{ss:class} we give a formal definition of Aschbacher's collection $\mathcal{S}$ (see Definition \ref{sdef}) and we recall a key result of Guralnick and Saxl (see Theorem \ref{gursax}), which imposes strong restrictions on the elements contained in a maximal subgroup in 
$\mathcal{S}$. In Sections \ref{ss:prob} and \ref{ss:comp}, we discuss the main probabilistic and computational methods we will use to prove our main theorems. We then focus on the proof of Theorem \ref{t:hat} in Section \ref{s:hat}; by working closely with the proof of \cite[Theorem 1]{BGS}, the main part of the argument requires a detailed analysis of the subgroups in $\mathcal{S}$ for the low-dimensional classical groups with $n \leqs 14$, which allows us to work closely with \cite{BHR,Schroder}. The proof of Theorem \ref{t:main} is presented in Section \ref{s:main}, which combines Theorem \ref{t:hat} and \cite[Theorem 1]{BGS} to reduce the proof to a small number of concrete cases, many of which are amenable to direct computation, using the methods discussed in Section \ref{ss:comp}. Finally, the two main tables referred to in the statement of Theorem \ref{t:main} are presented in Section \ref{s:tables}.

\vs

\noindent \textbf{Acknowledgements.} Both authors thank Marina Anagnostopoulou-Merkouri for helpful comments on an earlier version of this paper. Lei Wang thanks the China Scholarship Council for their financial support, and he thanks the School of Mathematics at the University of Bristol for hosting a research visit in 2026. He also acknowledges the support of NSFC Grants no. 12571022 and 12061083.

\section{Preliminaries}\label{s:prel}

In this section, we present several preliminary results, which we will need in the proofs of Theorems \ref{t:main} and \ref{t:hat}. Let $G$ be an almost simple classical group with natural module $V$ and let $H \in \mathcal{S}$ be a maximal subgroup of $G$.

In Section \ref{ss:class}, we begin by recalling the definition of Aschbacher's collection $\mathcal{S}$ of maximal subgroups from \cite{asch}. We also present a deep theorem of Guralnick and Saxl concerning the action of elements in $H$ on $V$ (see Theorem \ref{gursax}), which we can use to derive lower bounds on $|x^G|$ for each element $x \in H$ of prime order. The latter bounds are needed in order to effectively apply our probabilistic approach for determining the regularity status of a given tuple of core-free subgroups, as explained in Section \ref{ss:prob}. Here we also present Lemma \ref{l:key}, which is a new observation that plays a key role in the proof of Theorem \ref{t:main}, and it explains why we are interested in establishing Theorem \ref{t:hat}. Finally, in Section \ref{ss:comp} we outline some of the main computational methods we use in this paper.

\subsection{Classical groups and the collection $\mathcal{S}$}\label{ss:class}

Let $G$ be an almost simple classical group over $\mathbb{F}_{q}$ with socle $G_0$ and natural module $V$. Set $n = \dim V$ and write $q = p^f$, where $p$ is a prime. As pointed out in Section \ref{s:intro}, we may (and will) assume throughout that $G_0$ is one of the groups recorded in \eqref{e:groups}.

Assume for now that $G_0 \not\in \mathcal{G}$, where 
\[
\mathcal{G} = \{ {\rm Sp}_4(q) \, \mbox{(with $q$ even)},\; {\rm P\O}_8^{+}(q) \},
\]
and let $H$ be a core-free maximal subgroup of $G$. By a well known theorem of Aschbacher \cite{asch}, either $H$ belongs to one of eight subgroup collections $\C_1, \ldots, \C_8$ that are defined in terms of the geometry of $V$ (for example, the subgroups comprising $\C_1$ are the stabilisers of certain subspaces, or pairs of subspaces, of $V$), or $H$ is almost simple and acts irreducibly on $V$. The latter collection is denoted by $\mathcal{S}$ and following \cite[p.3]{KL} we give the following formal definition. Notice that the various conditions arising in this definition are designed to ensure that a subgroup in $\mathcal{S}$ is not contained in a geometric subgroup.

\begin{defn}\label{sdef}
Assume $G_0 \not\in \mathcal{G}$. Then a subgroup $H$ of $G$ belongs to the collection $\mathcal{S}$ if and only if it satisfies the following conditions:
\begin{itemize}\addtolength{\itemsep}{0.2\baselineskip}
\item[(i)] The socle $H_0$ of $H$ is a nonabelian simple group and $H_0 \not\cong G_0$.
\item[(ii)] If $\widehat{H}_0$ is the full covering group of $H_0$, and if $\rho : \widehat{H}_0 \to {\rm GL}(V)$ is a representation of $\widehat{H}_0$ such that $\rho(\widehat{H}_0)=H_0$ (modulo scalars), then $\rho$ is absolutely irreducible.
\item[(iii)] $\rho(\widehat{H}_0)$ cannot be realised over a proper subfield of $\mathbb{F}$, where 
$\mathbb{F} = \mathbb{F}_{q^2}$ if $G_0={\rm U}_{n}(q)$ is a unitary group, otherwise $\mathbb{F} = \mathbb{F}_{q}$.
\item[(iv)] If $\rho(\widehat{H}_0)$ fixes a nondegenerate quadratic form on $V$, then $G_0={\rm P\Omega}_{n}^{\e}(q)$.
\item[(v)] If $\rho(\widehat{H}_0)$ fixes a nondegenerate alternating form on $V$, but no nondegenerate quadratic form, then $G_0={\rm PSp}_{n}(q)$.
\item[(vi)] If $\rho(\widehat{H}_0)$ fixes a nondegenerate hermitian form on $V$, then $G_0={\rm U}_{n}(q)$.
\item[(vii)] If $\rho(\widehat{H}_0)$ does not fix a form as in (iv), (v) or (vi), then $G_0={\rm L}_{n}(q)$.
\end{itemize}
\end{defn}

As we highlighted in Section \ref{s:intro}, the maximal subgroups in $\mathcal{S}$ are determined in \cite{BHR} (up to conjugacy in $G$) for all of the classical groups with $n \leqs 12$. This classification has been extended to the groups with $n \in \{13,14,15\}$ by Schr\"{o}der \cite{Schroder}, with an almost complete result for $n \in \{16,17\}$ determined by Rogers in \cite{Rogers}. It remains an open problem to completely determine the subgroups in $\mathcal{S}$ for $n \geqs 18$, which is an intractable problem, in general.

\begin{rem}\label{r:S}
There is a version of Aschbacher's theorem for the groups with $G_0 \in \mathcal{G}$. However, the precise definition of the relevant subgroup collections is complicated by the existence of exceptional graph automorphisms that arise in these special cases. Indeed, a version for $G_0 = {\rm Sp}_4(q)$ (with $q$ even) is presented in \cite[Section 14]{asch}, while the groups with socle $G_0 = {\rm P\O}_8^{+}(q)$ were handled in a later paper by Kleidman \cite{K}. 
Here we explain how this impacts our definition of the collection $\mathcal{S}$ in these two special cases. 

\begin{itemize}\addtolength{\itemsep}{0.2\baselineskip}
\item[{\rm (i)}] Following \cite[Table 8.14]{BHR}, if $G_0 = {\rm Sp}_4(q)$ with $q = 2^f$, then $\mathcal{S}$ is non-empty if and only if $f \geqs 3$ is odd, in which case $\mathcal{S}$ comprises a unique conjugacy class of subgroups of the form $H = N_G(L)$, where $L = {}^2B_2(q)$.

\item[{\rm (ii)}] For $G_0 = {\rm P\O}_8^{+}(q)$ we refer to \cite[Table 8.50]{BHR} and we say that a maximal subgroup $H$ of $G$ is in $\mathcal{S}$ if it is of the form $H = N_G(L)$, where $L$ is one of the following:
\[
\hspace{15mm} {\rm L}_3^{\e}(q) \, (q \equiv \e \imod{3}), \, {}^3D_4(q_0) \, (q = q_0^3), \, \O_8^{+}(2) \, (q=p \geqs 3),
 \]
 \[
\hspace{15mm}  {}^2B_2(8) \, (q = 5), A_{10} \, (q=5), \, A_9 \, (q=2). 
\]
In particular, we exclude the following possibilities for $L$:
\begin{equation}\label{e:exc}
\hspace{15mm} \O_7(q) \, (p \geqs 3), \, {\rm Sp}_6(q) \, (p=2), \, {\rm P\O}_8^{-}(q_0) \, (q = q_0^2).
\end{equation}
This is justified because in each of the latter cases, the action of $G$ on $G/H$ is permutation isomorphic to the action of $G$ on $G/K$, where $K$ is a maximal subgroup in one of the geometric subgroup collections. For example, if $H = N_G(L)$ with $L = \O_7(q)$ an irreducibly embedded subgroup of $G_0$, then the action of $G$ on $G/H$ is permutation isomorphic to the action of $G$ on $G/K$, where $K \in \mathcal{C}_1$ is the stabiliser of a non-singular $1$-dimensional subspace of $V$.

\item[{\rm (iii)}] It is worth noting that the subgroups in $\mathcal{S}$ are not necessarily almost simple in these special cases. For example, if $G_0 = {\rm Sp}_4(q)$ and $G = G_0.\la \gamma \ra$, where $q = 2^f$ with $f \geqs 3$ odd and $\gamma$ is an involutory graph automorphism, then each $H \in \mathcal{S}$ is of the form $H = {}^2B_2(q) \times 2$. Similarly, if $G_0 = {\rm P\O}_8^{+}(q)$, $p \ne 3$ and $G = G_0.\la \theta \ra$, where $\theta$ is a triality graph automorphism, then there are subgroups in $\mathcal{S}$ of the form $H = {\rm PGL}_3^{\e}(q) \times 3$, where $q \equiv \e \imod{3}$. 
\end{itemize}
\end{rem}

\begin{rem}\label{r:S2}
In addition to the subgroups in \eqref{e:exc} that we have chosen to exclude when defining the collection $\mathcal{S}$ for $G_0 = {\rm P\O}_8^{+}(q)$, we make two further exclusions, as detailed below: 
\begin{itemize}\addtolength{\itemsep}{0.2\baselineskip}
\item[{\rm (i)}] If $G_0 = {\rm L}_4(2)$ then \cite[Table 8.9]{BHR} records a subgroup $H \in \mathcal{S}$ with socle $A_7$. Here  $G_0 \cong A_8$ and the action of $G$ on $G/H$ is permutation isomorphic to the natural action of $S_8$ or $A_8$ on $\{1, \ldots, 8\}$. Therefore, we exclude $(G_0,H_0) = ({\rm L}_4(2),A_7)$ since we are only interested in non-standard groups (in order to be consistent with \cite{BGS}, for example).
\item[{\rm (ii)}] Similarly, if $G_0 = {\rm L}_2(9)$ then \cite[Table 8.2]{BHR} lists a subgroup $H \in \mathcal{S}$ with socle $A_5$. We exclude this case because $G_0 \cong A_6$ and the action of $G$ on $G/H$ is permutation isomorphic to the standard action of $S_6$ or $A_6$ on $\{1, \ldots, 6\}$.
\end{itemize}
Notice that by excluding the pairs $(G_0,H_0) = ({\rm L}_4(2),A_7), ({\rm L}_2(9),A_5)$, together with the cases listed in \eqref{e:exc} for $G_0 = {\rm P\O}_8^{+}(q)$, our definition of the collection $\mathcal{S}$ agrees with the definition adopted in \cite{BGS} (see \cite[Table 2]{BGS}, noting that the case listed in the third row of this table, with $G_0 = {\rm L}_6^{\e}(q)$, does not arise).
\end{rem}

Next we turn to the elements contained in a maximal subgroup $H \in \mathcal{S}$. Here we are mainly interested in prime order elements $x \in H \cap {\rm PGL}(V)$ and we seek restrictions on the action of $x$ on $V$, which we can then use to determine a lower bound on $|x^G|$. Here the main result is a theorem of Guralnick and Saxl, stated as Theorem \ref{gursax} below, which requires us to introduce some additional notation and terminology.  

Suppose $x \in G \cap {\rm PGL}(V)$ is nontrivial and write $x = \hat{x}Z$, where $\hat{x} \in {\rm GL}(V)$ and $Z$ is the centre of ${\rm GL}(V)$. Set $\bar{V} = V \otimes k$, where $k$ is the algebraic closure of $\mathbb{F}_q$, and define 
\[
\nu(x) = \min\{\dim [\bar{V},\l\hat{x}] \,:\, \l \in k^{\times}\}
\]
where $[\bar{V},\l\hat{x}]$ is the subspace $\la v-v^{\l\hat{x}} \,:\, v \in \bar{V} \ra$. 
Notice that $\nu(x)$ is the codimension of the largest eigenspace of $\hat{x}$ on $\bar{V}$, and this does not depend on the choice of coset representative $\hat{x}$. In addition, if $H$ is a subgroup of $G$ then it will be convenient to set 
\begin{equation}\label{e:nuH}
\nu(H) = \min\{ \nu(x) \,:\, \mbox{$x \in H \cap {\rm PGL}(V)$ has prime order} \}.
\end{equation}

In order to state Theorem \ref{gursax} below, we need to define two important subcollections of $\mathcal{S}$, which are usually denoted by the symbols $\mathcal{A}$ and $\mathcal{C}$.

To define the collection $\mathcal{A}$, let $H_0 = A_m$ be an alternating group with $m \geqs 5$ and let $p$ be a prime. Consider the natural action of $S_m$ on the permutation module $\mathbb{F}_{p}^m$ and define the following subspaces  
\[
U=\{(a_1, \ldots, a_m) \,:\, \sum_{i=1}^{m}a_i=0\},\;\; W=\{(a,\ldots, a) \,:\, a \in \mathbb{F}_{p}\}.
\]
Notice that $U$ and $W$ are the only nonzero proper $H_0$-invariant submodules of $\mathbb{F}_{p}^m$. Then
$V=U/(U \cap W)$ is an absolutely irreducible $H_0$-module over $\mathbb{F}_{p}$, which we refer to as the \emph{fully deleted permutation module} for $H_0$. Note that if $n = \dim V$ then 
$n = m-2$ if $p$ divides $m$, otherwise $n = m-1$. If $m \geqs 10$ then $V$ has the smallest dimension of all nontrivial irreducible $H_0$-modules over $\mathbb{F}_{p}$ (see \cite[Proposition 5.3.5]{KL}). Let us also observe that $H_0$ preserves the symmetric bilinear form $\b:U \times U \to \mathbb{F}_{p}$ defined by
\[
\b((a_1, \ldots, a_m),(b_1, \ldots, b_m)) = \sum_{i=1}^{m}a_ib_i,
\]
which induces a symmetric bilinear form on $V$. In this way, the action of $H_0$ on $V$ yields an embedding of $H_0$ in an appropriate symplectic or orthogonal group defined over $\mathbb{F}_p$, and the possibilities that arise are recorded in Table \ref{atab}. In particular, the collection $\mathcal{A}$ is empty if $G$ is a linear or unitary group, or if $q \ne p$.

\renewcommand{\arraystretch}{1.1}
\begin{table}
$$\begin{array}{lll} \hline
G_0 & m & \mbox{Conditions} \\ \hline
{\rm Sp}_n(2) & n+2 & n \geqs 8,\, n \equiv 0 \imod{4} \\
\O_{n}^{+}(2) & n+2 & n \geqs 14,\, n \equiv 6 \imod{8} \\
& n+1 & n \geqs 8,\, n \equiv 0 \imod{8} \\
\O_{n}^{-}(2) & n+2 & n \geqs 10,\, n \equiv 2 \imod{8} \\
 & n+1 & n \geqs 12,\, n \equiv 4 \imod{8} \\
{\rm P\Omega}_n^{\e}(p) & n+2 & n \geqs 7, \, p \geqs 3, \, \mbox{$p$ divides $n+2$} \\
& n+1 & n \geqs 7, \, p \geqs 3, \, (p,n+2) = (p,n+1) = 1 \\ \hline
\end{array}$$
\caption{The collection $\mathcal{A}$, $H_0=A_{m}$}
\label{atab}
\end{table}
\renewcommand{\arraystretch}{1}

\begin{rem}\label{r:max0}
Notice that in Table \ref{atab} we have excluded the cases where $H_0 = A_{n+1}$ and $G_0 = \O_n^{\e}(2)$ with $n \equiv 2\e \imod{4}$. This is due to the fact that $H_0 < A_{n+2} < G_0$ in both cases, so $H$ is non-maximal.
\end{rem}

The collection $\mathcal{C}$ is defined in Table \ref{ctab}, where $H_0$ denotes the socle of $H$. In each case, we have $n \leqs 10$ and so the precise structure of $H$, as well as the exact  conditions on $q$ required for the maximality of $H$, can be read off from the relevant tables in \cite[Chapter 8]{BHR}. 

\renewcommand{\arraystretch}{1.1}
\begin{table} 
$$\begin{array}{lll} \hline
G_0 & H_0 & \mbox{Conditions} \\ \hline
{\rm PSp}_{10}(q) & {\rm U}_{5}(2) &  q = p \geqs 3 \\ 
{\rm P\Omega}_{8}^{+}(q) & {}^{3}D_{4}(q_{0}) & q=q_{0}^{3} \\
 & \Omega_{8}^{+}(2) & q = p \geqs 3 \\
{\rm L}_{7}^{\e}(q) & {\rm U}_{3}(3) & q = p\equiv \e\imod{4}, \, p \geqs 5 \\
\Omega_{7}(q) & G_2(q) & p \geqs 3 \\
& {\rm Sp}_{6}(2) & q = p \geqs 3 \\
{\rm L}_{6}^{\e}(q) &  {\rm L}_{3}^{\e}(q) &  p \geqs 3 \\
&   {\rm U}_{4}(3) &  q = p\equiv \e\imod{6} \mbox{ or } (\e,q) = (-,2) \\
&  {\rm L}_{3}(4) & q = p\equiv \e\imod{6} \\
& A_{7} & q \in \{p,p^2\}, \, p \geqs 5 \\
&  A_{6} & q \in \{p,p^2\}, \, p \geqs 5 \\
&  {\rm M}_{22} & (\e,q) = (-,2) \\
&  {\rm M}_{12} & (\e,q) = (+,3) \\
{\rm PSp}_{6}(q) &  G_2(q) & p = 2, \, q \geqs 4 \\
&  {\rm J}_{2} & q \in \{p,p^2\}, \, p \geqs 3  \\
&  {\rm U}_{3}(3) & q = p \geqs 7 \mbox{ or } q = 2 \\ \hline
\end{array}$$
\caption{The collection $\mathcal{C}$}
\label{ctab}
\end{table}
\renewcommand{\arraystretch}{1}

We are now ready to present the main result on $\nu(H)$ for subgroups $H \in \mathcal{S}$. In the statement, we continue (as always) to assume that $G_0$ is one of the groups defined in \eqref{e:groups} and we work (as always) with the definition of $\mathcal{S}$ discussed above (see Remarks \ref{r:S} and \ref{r:S2}).

\begin{thm}\label{gursax}
Let $G$ be a finite almost simple classical group with socle $G_0$ and let $H \in \mathcal{S}$ be a maximal subgroup of $G$. If $n \geqs 6$, where $n = \dim V$ is the dimension of the natural module for $G_0$, then either $H \in \mathcal{A} \cup \mathcal{C}$ or $\nu(H) > \max\{2,\sqrt{n}/2\}$.
\end{thm}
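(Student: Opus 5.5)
This is essentially the Guralnick–Saxl theorem, so the plan is to deduce it from their main result rather than reprove it. In \cite{GS}, Guralnick and Saxl study $\nu(x)$ for prime order elements $x$ in an irreducible almost simple subgroup $H$ of a classical group. Their main theorem says that if $H_0$ acts absolutely irreducibly on $V$ and cannot be realised over a proper subfield, then either $\nu(H) > \max\{2,\sqrt{n}/2\}$ or $(H_0,V)$ lies in an explicit list of exceptions. These exceptions are:
\begin{itemize}
\item[(a)] $H_0$ is of Lie type in the defining characteristic and $V$ is a "small" module (natural, adjoint-type, spin, or a few others);
\item[(b)] $H_0 = A_m$ and $V$ is the fully deleted permutation module;
\item[(c)] a finite list of small-dimensional cases with $n \leqs 10$.
\end{itemize}
The first step is to check that the hypotheses of that theorem follow from conditions (i)--(iii) of Definition \ref{sdef}, together with Remarks \ref{r:S} and \ref{r:S2} for the special families. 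One also has to note that $\nu$ is computed on projective prime order elements of $H \cap {\rm PGL}(V)$, which matches the quantity $\nu(H)$ in \eqref{e:nuH}.

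The second step is to go through each exceptional configuration in \cite{GS} and show that, for a \emph{maximal} subgroup $H \in \mathcal{S}$ of $G$ with $n \geqs 6$, it either lies in $\mathcal{A} \cup \mathcal{C}$ or does not occur at all.

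\begin{itemize}
\item[(1)] \emph{Fully deleted permutation modules.} These give precisely the cases in Table \ref{atab}. The excluded cases of Remark \ref{r:max0} are non-maximal, and the linear and unitary cases are excluded by Definition \ref{sdef}(iii)--(vii), because the module carries a symmetric form over $\mathbb{F}_p$.
\item[(2)] \emph{Defining-characteristic modules.} Here I use the standard fact (Seitz and Liebeck's results on low-dimensional modules, as recorded in \cite[Chapter 5]{KL}) that such $H$ is either non-maximal or lies in a geometric collection. Examples are the natural module (then $H_0 \cong G_0$, which is excluded by Definition \ref{sdef}(i)), tensor-decomposable modules (these are $\mathcal{C}_4$ or $\mathcal{C}_7$), and modules realised over subfields (these are $\mathcal{C}_5$). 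What survives are the listed ones: $G_2(q)<\O_7(q)$ or ${\rm Sp}_6(q)$, ${}^3D_4(q_0)<{\rm P\O}_8^+(q)$, ${\rm L}_3^{\e}(q)<{\rm L}_6^{\e}(q)$ via the symmetric square, and the spin-type embeddings that have been absorbed into $\mathcal{C}_1$ by Remark \ref{r:S}(ii).
\item[(3)] \emph{Small cross-characteristic exceptions.} For each of the finitely many families with $6 \leqs n \leqs 10$, I compare against the tables in \cite[Chapter 8]{BHR} to confirm maximality and membership in Table \ref{ctab}. Examples are ${\rm U}_5(2)$ in dimension $10$, ${\rm U}_3(3)$ in dimensions $6$ and $7$, $J_2$ in dimension $6$, ${\rm M}_{12}$, ${\rm M}_{22}$, and $6.A_7$, $6.A_6$, $6.{\rm L}_3(4)$, $6_1.{\rm U}_4(3)$ in dimension $6$. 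Any remaining low-dimensional case not in Table \ref{ctab} is either non-maximal according to \cite{BHR} or, after a direct check of the Brauer character values on prime order elements, actually satisfies $\nu(x) \geqs 3 > \sqrt{n}/2$ for all $x$.
\end{itemize}

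I expect the main obstacle to be this bookkeeping in step two. The Guralnick–Saxl exception list is phrased in terms of pairs $(H_0,V)$ rather than maximal subgroups of specific $G$. So one must carefully match conditions on $q$, handle the cases where $H$ contains field or graph automorphisms outside ${\rm PGL}(V)$ (these do not affect $\nu(H)$ by definition), and treat the special groups ${\rm Sp}_4(q)'$ and ${\rm P\O}_8^+(q)$. For $n=8$ the bound $\max\{2,\sqrt{2}\}=2$ has to be verified directly for $\O_8^+(2)$, $A_9$, $A_{10}$ and ${}^2B_2(8)$, using the character tables in \cite{GAPCTL} to compute eigenspace dimensions of prime order elements. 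Where the eigenvalue multiplicities are not immediate, I would fall back on a {\sc Magma} computation with the explicit representations.
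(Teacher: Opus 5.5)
Your route is the paper's route. The proof there is the single sentence that the statement is a special case of \cite[Theorem 7.1]{GS}. The collections $\mathcal{A}$ and $\mathcal{C}$ in Tables \ref{atab} and \ref{ctab} are set up precisely to record the exceptional configurations of that theorem for maximal subgroups in $\mathcal{S}$ with $n \geqs 6$. So the reconciliation in your second step is not needed: you do not have to rederive the defining-characteristic survivors or recheck maximality against \cite{BHR}.

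One concrete point in that bookkeeping is wrong, though. For $n=8$ you propose to verify $\nu(H)>2$ directly for $H_0 = \O_8^{+}(2)$, $A_9$, $A_{10}$ and ${}^2B_2(8)$. For the first three this bound is false.
\begin{itemize}
\item The cover $2.\O_8^{+}(2) = W(E_8)'$ contains products of two orthogonal reflections, which act on $V$ as $(-I_2,I_6)$.
\item A $3$-cycle in $A_9$ or $A_{10}$ acts on the fully deleted permutation module with eigenvalues $(I_6,\omega,\omega^2)$.
\end{itemize}
So $\nu(H)=2$ in all three cases. These groups satisfy the theorem only through its first alternative. The pair $({\rm P\O}_8^{+}(q),\O_8^{+}(2))$ is in Table \ref{ctab}, while $A_9 < \O_8^{+}(2)$ and $A_{10} < {\rm P\O}_8^{+}(5)$ lie in $\mathcal{A}$ (Table \ref{atab}). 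Any direct check must therefore first test membership in $\mathcal{A} \cup \mathcal{C}$, and only then verify $\nu(H) > \max\{2,\sqrt{n}/2\}$ for the groups that remain, such as ${}^2B_2(8) < {\rm P\O}_8^{+}(5)$.
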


\begin{proof}
This is a special case of \cite[Theorem 7.1]{GS}.
\end{proof}

Detailed information on the conjugacy classes of prime order elements in the almost simple classical groups is presented in \cite[Chapter 3]{BG} and we will freely refer to the notation and terminology therein. In particular, if $q$ is even and $G_0$ is a symplectic or orthogonal group, then we will frequently refer to unipotent involutions of type $a_i$, $b_i$ and $c_i$ (see \cite[Sections 3.4.4, 3.5.4]{BG}), which is consistent with the original notation introduced by Aschbacher and Seitz in \cite{AS}.

Given such an element $x \in G \cap {\rm PGL}(V)$, bounds on $|x^G|$ in terms of $\nu(x)$ are derived in \cite[Section 3]{FPR2}. In this way, if $n \geqs 6$ and $H \in \mathcal{S} \setminus \mathcal{C}$, then we can often use the lower bound on $\nu(H)$ in Theorem \ref{gursax} to derive a lower bound on $|x^G|$, which is valid for all prime order elements $x \in H \cap {\rm PGL}(V)$. With this goal in mind, we present the following proposition, which will be useful in the proof of Theorem \ref{t:hat}. In the statement of the proposition, and for the remainder of the paper, we set $Q = q/(q+1)$.

\begin{prop}\label{p:xgbd}
Suppose $G_0 = {\rm L}_n^{\e}(q)$ $(n \geqs 9)$, ${\rm PSp}_n(q)$ $(n \geqs 6)$, or ${\rm P\O}_n^{\e}(q)$ $(n \geqs 7)$ and let $H$ be a subgroup of $G$ with $\nu(H) \geqs 3$. Then 
\[
|x^G|>\frac{1}{a}Qq^{3b(n-3-c)}
\]
for all $x \in H$ of prime order, where
\[
(a,b,c) = \left\{ \begin{array}{ll}
(2,2,0) & \mbox{if $G_0 = {\rm L}_n^{\e}(q)$} \\
(4,1,0) & \mbox{if $G_0 = {\rm PSp}_n(q)$} \\
(4,1,1) & \mbox{if $G_0 = {\rm P\O}_n^{\e}(q)$.}
\end{array}\right.
\] 
\end{prop}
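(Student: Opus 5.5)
The plan is to reduce the statement to a lower bound on $|x^G|$ for a prime order element $x \in G$ with $\nu(x) = s \geqs 3$, and then minimise over $s$ and over the possible types of $x$. We may assume $x \in H$ has prime order $r$. If $x \notin {\rm PGL}(V)$, then $x$ is a field, graph or graph-field automorphism. Here we use the known bounds on $|x^G|$ for such outer elements; for example, $|x^G| > \frac{1}{2}Qq^{\dim G_0/2}$ in most cases, via the standard centraliser structure in \cite[Chapter 3]{BG}. Since $\dim G_0$ is roughly $n^2$ for the linear and unitary groups and $n^2/2$ for the others, these bounds comfortably exceed $q^{3b(n-3-c)}$ in the stated ranges. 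The real work is therefore for $x \in H \cap {\rm PGL}(V)$, where the hypothesis $\nu(H) \geqs 3$ gives $\nu(x) \geqs 3$.

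For such inner-diagonal $x$, I would use the bounds from \cite[Section 3]{FPR2}. These say that $|x^G|$ is at least the size of the smallest class of elements with $\nu(x) = s$. That class is realised by a unipotent element with $s$ Jordan blocks of size $2$ (or a transvection-like analogue), or by a semisimple element with a large eigenspace of codimension $s$. Concretely, the bounds take the following forms:
\begin{itemize}
\item for ${\rm L}_n^{\e}(q)$: $|x^G| > \frac{1}{2}Qq^{2s(n-s)}$;
\item for ${\rm PSp}_n(q)$: $|x^G| > \frac{1}{4}Qq^{s(n-s)}$, with suitable care when $q$ is even and $x$ is an involution of type $a_s$, $b_s$ or $c_s$;
\item for orthogonal groups: $|x^G| > \frac{1}{4}Qq^{s(n-s-1)}$.
\end{itemize}
Each is obtained by bounding $|C_{G}(x)|$ above through the centraliser orders in \cite{BG}, and the constant $Q = q/(q+1)$ absorbs the products $\prod(1 - \e^i q^{-i})$.

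Next I would note that $s(n-s)$, respectively $s(n-s-1)$, is a concave function of $s$. Since $\nu(x) \leqs n/2$ always, its minimum over $3 \leqs s \leqs n/2$ occurs at $s = 3$. This gives exactly the exponents $6(n-3)$ for linear and unitary groups, $3(n-3)$ for symplectic groups, and $3(n-4)$ for orthogonal groups. These match $3b(n-3-c)$ with $(a,b,c)$ as stated. The dimension restrictions ($n \geqs 9$ for linear/unitary, $n \geqs 6$ or $7$ otherwise) ensure that $s = 3$ is admissible and that the range $3 \leqs s \leqs n/2$ is nonempty. In the linear case I would check that the largest-eigenspace interpretation of $\nu$ still gives $s \leqs n/2$ up to the symmetric range; if $s > n/2$, the bound only improves.

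The main obstacle I expect is making the constants $1/a$ and the factor $Q$ uniformly valid. This is delicate for small $q$, especially $q = 2$ or $3$, where the products over cyclotomic factors are smallest. It is also delicate for unipotent involutions in even characteristic, whose centralisers have an extra $2$-group factor, and for semisimple elements whose centralisers are of the form ${\rm GL}_{n-s}^{\e}(q) \times {\rm GU}_1(q^s)$. My first approach would be to quote the explicit lemmas of \cite[Section 3]{FPR2} directly, since they are stated in essentially this form. I would then verify only the minimisation in $s$ and the handling of outer automorphisms by hand.
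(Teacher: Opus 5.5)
Your route is the paper's. For $x \in H\cap{\rm PGL}(V)$ the hypothesis gives $\nu(x)\geqslant 3$ and one quotes \cite[Corollary 3.38]{FPR2}. Field, graph and graph-field automorphisms are handled by \cite[Corollary 3.49]{FPR2}. However, two of your supporting claims are false, and the second hides the one step the paper actually has to work for.

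First, it is not true that $\nu(x)\leqslant n/2$ for elements of prime order. A regular semisimple element of prime order, or the unipotent element $J_n$ when $p\geqslant n$, has $\nu(x)=n-1$. Your fallback, that ``if $s>n/2$ the bound only improves'', is also false for the formula you wrote. Since $2s(n-s)$ is symmetric about $n/2$, the bound $\frac{1}{2}Qq^{2s(n-s)}$ gets worse for $s>n/2$: at $s=n-1$ it is only $\frac{1}{2}Qq^{2(n-1)}$, far below $q^{6(n-3)}$. So the concavity-and-minimise step does not go through as stated. What is true is that such classes really are large. If every eigenspace of $\hat{x}$ on $\bar{V}$ has dimension at most $n-s<n/2$, then $\dim C_{{\rm GL}(\bar V)}(\hat x)\leqslant n(n-s)$, so in the linear case the class has dimension at least $ns>n^2/2\geqslant 6(n-3)$. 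Alternatively, apply \cite[Corollary 3.38]{FPR2} directly under the hypothesis $\nu(x)\geqslant 3$, which is exactly what the paper does.

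Second, the outer automorphisms are not ``comfortable'' for linear and unitary groups. Your heuristic $|x^G|>\frac{1}{2}Qq^{\dim G_0/2}$ fails for involutory graph automorphisms with centraliser of type ${\rm Sp}_n(q)$ ($n$ even). For these the class has size roughly $q^{(n^2-n-2)/2}$, and the uniform bound from \cite[Corollary 3.49]{FPR2} is $|x^G|>\frac{1}{2}Qq^{\frac{1}{2}(n^2-n-4)}$. Beating $\frac{1}{2}Qq^{6(n-3)}$ requires $n^2-n-4\geqslant 12(n-3)$. This holds for $n\geqslant 10$, but with only one power of $q$ to spare at $n=10$ ($43$ versus $42$), and it fails at $n=9$ ($34<36$). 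The paper therefore treats $n=9$ separately: $n$ is odd, so there are no symplectic-type graph automorphisms, and inspecting \cite[Table 3.11]{FPR2} gives $|x^G|>\frac{1}{2}Qq^{39}$. You identify small $q$ and even-characteristic unipotent involutions as the main obstacle, but those are absorbed into the cited corollaries. The genuinely tight case is this graph-automorphism comparison at $n\in\{9,10\}$, and your argument needs to check it explicitly rather than assert it.
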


\begin{proof}
Suppose $x \in H$ has prime order and first assume $G_0 = {\rm L}_n^{\e}(q)$ with $n \geqs 9$. Now, if $x \in H \cap {\rm PGL}(V)$ then $\nu(x) \geqs 3$ and the required lower bound on $|x^G|$ follows from \cite[Corollary 3.38]{FPR2}. Similarly, if $x \not\in H \cap {\rm PGL}(V)$ then \cite[Corollary 3.49]{FPR2} gives
\[
|x^G|>\frac{1}{2}Qq^{\frac{1}{2}(n^2-n-4)}
\]
(minimal if $n$ is even and $x$ is an involutory graph automorphism with $C_G(x)$ of type ${\rm Sp}_n(q)$) 
and the result follows for $n \geqs 10$ since $n^2-n-4 \geqs 12(n-3)$. Finally, if $n=9$ and $x \not\in H \cap {\rm PGL}(V)$ then by inspecting \cite[Table 3.11]{FPR2} we deduce that $|x^G|>\frac{1}{2}Qq^{39}$ and the result follows.

Similarly, if $G_0 = {\rm PSp}_n(q)$ or ${\rm P\O}_n^{\e}(q)$, then it is routine to check that the desired result follows by combining the bounds presented in \cite[Corollary 3.38]{FPR2} and \cite[Corollary 3.49]{FPR2}. 
\end{proof}

\begin{rem}\label{r:pso}
Suppose $G_0 = {\rm P\O}_n^{\e}(q)$, where $n \geqs 12$ is even, and let $H$ be a subgroup of $G$ with $\nu(H) \geqs 3$. We claim that
\[
|x^G|>\frac{1}{2d}q^{3(n-3)}
\]
for all $x \in H$ of prime order, where $d = (2,q-1)$. This lower bound follows immediately from \cite[Corollary 3.49]{FPR2} if $x \not\in {\rm PGL}(V)$, so let us assume $x \in H \cap {\rm PGL}(V)$ has prime order $r$. If $\nu(x) \geqs 4$ then \cite[Corollary 3.38]{FPR2} gives $|x^G|>\frac{1}{4}Qq^{4(n-5)}$ and the result follows unless $(n,q) = (12,2)$. In the latter case, it is easy to check that the condition $\nu(x) \geqs 4$ implies that $|x^G|>2^{27}$, minimal if $x$ is an involution of type $a_4$ in the notation of \cite{AS}. And if $\nu(x) = 3$ then $r=2$ and either $q$ is even and $|x^G|>\frac{1}{2}q^{3(n-3)}$ (see \cite[Proposition 3.22]{FPR2}), or $q$ is odd, $x$ is an involution of type $(-I_3,I_{n-3})$ and  
\[
|x^G| \geqs \frac{|{\rm SO}_{n}^{\e}(q)|}{2|{\rm SO}_3(q)||{\rm SO}_{n-3}(q)|}>\frac{1}{4}q^{3(n-3)}.
\]
This justifies the claim. 
\end{rem}

\subsection{Probabilistic methods}\label{ss:prob}

Let $G$ be a finite group, let $\tau = (H_1, \ldots, H_t)$ be a $t$-tuple of core-free subgroups and consider the natural action of $G$ on $\Gamma = G/H_1 \times \cdots \times G/H_t$. Let   
\[
Q(G,\tau) = \frac{|\{(\alpha_1, \ldots, \alpha_t) \in \Gamma \,:\, \bigcap_{i = 1}^{t}G_{\alpha_i}\neq 1\}|}{|\Gamma|}
\]
be the probability that a uniformly random point in $\Gamma$ is not contained in a regular orbit of $G$ and observe that $\tau$ is regular if and only if $Q(G, \tau) < 1$. 

Recall that if $x \in G$ and $H$ is a core-free subgroup of $G$, then we write
\[
{\rm fpr}(x,G/H) = \frac{|x^G \cap H|}{|x^G|}
\]
for the \emph{fixed point ratio} of $x$ in its action on $G/H$, which is simply the proportion of points in $G/H$ fixed by $x$. In particular, ${\rm fpr}(x,G/H)$ is the probability that a uniformly random point in $G/H$ is fixed by $x$. 

The following key result gives a natural extension of a powerful  probabilistic approach for studying base sizes, which was originally introduced by Liebeck and Shalev in \cite{LSh2}. In particular, it  allows us to bring upper bounds on fixed point ratios into play in order to derive an upper bound on the probability $Q(G,\tau)$.

\begin{lem}\label{l:fpr}
Let $G$ be a finite group and let $\tau = (H_1, \ldots, H_t)$ be a $t$-tuple of core-free subgroups of $G$. Then $\tau$ is regular if 
\[
\widehat{Q}(G,\tau) := \sum_{i=1}^{\ell}|x_i^G| \cdot \left(\prod_{j=1}^t {\rm fpr}(x_i, G/H_j)\right) < 1,
\]
where $x_1, \ldots, x_{\ell}$ is a set of representatives of the conjugacy classes in $G$ of elements of prime order.
\end{lem}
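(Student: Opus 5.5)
We prove Lemma \ref{l:fpr} by a union bound over the ``bad'' points of $\Gamma$. Fix the uniform probability measure on $\Gamma = G/H_1 \times \cdots \times G/H_t$. A point $\gamma = (\a_1, \ldots, \a_t) \in \Gamma$ lies outside every regular $G$-orbit precisely when its stabiliser $G_\gamma = \bigcap_j G_{\a_j}$ is nontrivial. Any nontrivial group contains an element of prime order. So $\gamma$ is bad if and only if some prime order element $x \in G$ fixes every coordinate $\a_j$. This gives
\[
Q(G,\tau) \leqs \sum_{x \in \mathcal{P}} \mathbb{P}(x \mbox{ fixes } \gamma),
\]
where $\mathcal{P}$ is the set of prime order elements of $G$ and $\gamma$ is chosen uniformly from $\Gamma$.

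Next we compute each summand. The coordinates of a uniform point of $\Gamma$ are independent and uniform on the factors $G/H_j$. Hence the probability that $x$ fixes $\gamma$ is the product over $j$ of the probability that $x$ fixes a uniform point of $G/H_j$, namely $\prod_{j=1}^t {\rm fpr}(x,G/H_j)$. To check the fixed point ratio formula, count fixed points. The coset $H_jg$ is fixed by $x$ if and only if $gxg^{-1} \in H_j$. So the number of fixed points is $|C_G(x)|\cdot|x^G \cap H_j|/|H_j|$. Dividing by $|G:H_j|$ gives $|x^G\cap H_j|/|x^G|$, as defined in the paper.

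Finally, the fixed point ratio is a class function, so we group the sum by conjugacy classes. The class of $x_i$ contributes $|x_i^G|$ equal terms, each equal to $\prod_j {\rm fpr}(x_i,G/H_j)$. Therefore $Q(G,\tau) \leqs \what{Q}(G,\tau)$. If $\what{Q}(G,\tau)<1$, then $Q(G,\tau)<1$, so some point of $\Gamma$ has trivial stabiliser. Its orbit is regular, and hence $\tau$ is regular.

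The only step needing any care is the independence of the coordinates together with the fixed point count. Both are routine, so I expect no real obstacle.
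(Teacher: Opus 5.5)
Your proof is correct and is essentially the paper's argument. The paper simply cites \cite[Lemma 2.1]{AB} for the inequality $Q(G,\tau) \leqs \what{Q}(G,\tau)$, and your argument (a union bound over prime order elements, independence of the coordinates, and the fixed point count giving $|x^G \cap H_j|/|x^G|$) is the standard proof of that cited inequality, written out in full.
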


\begin{proof}
By \cite[Lemma 2.1]{AB} we have $Q(G,\tau) \leqs \widehat{Q}(G,\tau)$ and the result follows immediately.
\end{proof}

\begin{rem}\label{r:conj}
In the special case where each component subgroup $H_j$ in $\tau$ is conjugate to $H_1$, we write 
\[
Q(G,H_1,t) = Q(G,\tau),\;\; \widehat{Q}(G,H_1,t) = \widehat{Q}(G,\tau),
\]
so $Q(G,H_1,t)<1$ if and only if $b(G,H_1) \leqs t$, where we recall that $b(G,H_1)$ denotes the base size with respect to the action of $G$ on $G/H_1$. And in the special case $t=2$, we set 
\[
Q(G,H_1) = Q(G,H_1,2), \;\; \widehat{Q}(G,H_1) = \widehat{Q}(G,H_1,2).
\]
\end{rem}

The next lemma, which is a new observation, will play a key role in the proof of Theorem \ref{t:main}. In the special case $t=2$, this explains why we are interested in determining the pairs $G$ and $H$ in Theorem \ref{t:hat} with $\what{Q}(G,H) \geqs 1$.

\begin{lem}\label{l:key}
Let $G$ be a finite group and let $\tau = (H_1, \ldots, H_t)$ be a $t$-tuple of core-free subgroups of $G$. Then
\[
\widehat{Q}(G,\tau) \leqs \frac{1}{t}\sum_{j=1}^t\widehat{Q}(G,H_j,t).
\]
In particular, $\tau$ is regular if $\widehat{Q}(G,H_j,t)<1$ for all $j$.
\end{lem}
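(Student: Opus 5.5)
The inequality should follow termwise from the arithmetic--geometric mean inequality. For each prime order representative $x_i$, set $a_{ij} = {\rm fpr}(x_i,G/H_j) \geqs 0$. The $i$-th summand of $\widehat{Q}(G,\tau)$ is then $|x_i^G|\prod_{j=1}^t a_{ij}$. The $i$-th summand of $\widehat{Q}(G,H_j,t)$ is $|x_i^G|\,a_{ij}^t$, since in the conjugate tuple $(H_j,\ldots,H_j)$ every factor in the product equals $a_{ij}$.

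We apply AM--GM to the nonnegative reals $a_{i1}^t,\ldots,a_{it}^t$:
\[
\prod_{j=1}^t a_{ij} = \left(\prod_{j=1}^t a_{ij}^t\right)^{1/t} \leqs \frac{1}{t}\sum_{j=1}^t a_{ij}^t .
\]
Multiplying by $|x_i^G|$ and summing over $i = 1,\ldots,\ell$ (the same set of class representatives is used in every $\widehat{Q}$ expression), then interchanging the finite sums, gives
\[
\widehat{Q}(G,\tau) \leqs \frac{1}{t}\sum_{j=1}^t \sum_{i=1}^{\ell}|x_i^G|\,a_{ij}^t = \frac{1}{t}\sum_{j=1}^t \widehat{Q}(G,H_j,t).
\]

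For the final assertion, suppose $\widehat{Q}(G,H_j,t)<1$ for all $j$. Then the right-hand side is an average of numbers less than $1$, so $\widehat{Q}(G,\tau)<1$, and Lemma \ref{l:fpr} shows that $\tau$ is regular.

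There is no real obstacle here. The only point needing care is to check that fixed point ratios are nonnegative, so that AM--GM applies, and that the definitions of $\widehat{Q}(G,H_j,t)$ in Remark \ref{r:conj} use the same set of class representatives as $\widehat{Q}(G,\tau)$. Both are immediate from the definitions.
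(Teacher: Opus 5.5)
Your proof is correct and is essentially the paper's argument: apply the AM--GM inequality termwise to the fixed point ratios ${\rm fpr}(x_i,G/H_j)$, multiply by $|x_i^G|$, sum over $i$ and interchange the finite sums. Your explicit appeal to Lemma \ref{l:fpr} for the final assertion spells out a step the paper leaves implicit.
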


\begin{proof}
Let $x \in G$ be an element of prime order. Then the inequality of arithmetic and geometric means implies that
\[
\prod_{j=1}^t {\rm fpr}(x,G/H_j) \leqs \frac{1}{t}\sum_{j=1}^t {\rm fpr}(x,G/H_j)^t
\]
and thus
\begin{align*}
\widehat{Q}(G,\tau) = \sum_{i=1}^{\ell}|x_i^G| \cdot \left(\prod_{j=1}^t {\rm fpr}(x_i, G/H_j)\right) & \leqs \frac{1}{t} \sum_{i=1}^{\ell} |x_i^G| \cdot \sum_{j=1}^{t} {\rm fpr}(x_i,G/H_j)^t \\
& = \frac{1}{t} \sum_{j=1}^t \left(\sum_{i=1}^{\ell} |x_i^G| \cdot {\rm fpr}(x_i,G/H_j)^t\right) \\
& = \frac{1}{t} \sum_{j=1}^t \what{Q}(G,H_j,t)
\end{align*}
as required.
\end{proof}

\begin{rem}\label{r:key}
Let us observe that we cannot replace $\what{Q}$ by $Q$ in Lemma \ref{l:key}. 
\begin{itemize}\addtolength{\itemsep}{0.2\baselineskip}
\item[{\rm (i)}] Indeed, there exist non-regular tuples $\tau = (H_1, \ldots, H_t)$ such that $b(G,H_j) \leqs t$ for all $j$. For instance, $G = {\rm L}_2(7)$ has two non-conjugate subgroups $H_1$ and $H_2$ such that $H_i \cong A_4$ and $b(G,H_i) = 2$ for $i = 1,2$. Now $H_i$ has a unique regular orbit on $G/H_i$, which means that $Q(G,H_i) = \frac{1}{7}$ for $i = 1,2$. However, we  compute $\what{Q}(G,H_i) = \frac{11}{7}$ for $i = 1,2$ and one can check that $\tau = (H_1,H_2)$ is non-regular. 

\item[{\rm (ii)}] Conversely, there also exist regular tuples $\tau = (H_1, \ldots, H_t)$ such that $b(G,H_j) > t$ for all $j$. For example, $G = A_8$ has subgroups $H_1 = (8{:}7).3$ and $H_2 = 2^2.S_4$ such that $b(G,H_i) = 3$ for $i=1,2$ and $\tau = (H_1,H_2)$ is regular. Here $H_1$ and $H_2$ are both contained in a maximal subgroup ${\rm AGL}_3(2)$ of $G$. 
\end{itemize}
\end{rem}

Our final result is \cite[Lemma 2.2]{AB}, which provides a useful way to produce an upper bound on $\what{Q}(G,\tau)$. We will use this extensively in the proof of Theorem \ref{t:hat}.

\begin{lem}\label{l:favbound}
Let $G$ be a finite group and let $\tau = (H_1, \ldots, H_t)$ be a 
$t$-tuple of core-free subgroups of $G$. Suppose $x_1, \ldots, x_m$ are elements in $G$ such that $|x_i^G| \geqs B$ for all $i$ and $\sum_{i = 1}^m |x_i^G\cap H_j| \leqs A_j$ for all $j$. Then 
\[
\sum_{i = 1}^m |x_i^G| \cdot \left( \prod_{j = 1}^t {\rm fpr}(x_i, G/H_j)\right) \leqs B^{1 - k}\cdot \prod_{j = 1}^kA_j
\]
\end{lem}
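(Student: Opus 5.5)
The statement to prove is Lemma \ref{l:favbound}: for each $i$ we must bound $|x_i^G|\prod_j {\rm fpr}(x_i,G/H_j)$ and then sum. Here $k$ should be read as $t$; the bound is $B^{1-t}\prod_j A_j$. The plan is to rewrite each summand in terms of the class sizes and the intersection counts, and then use the lower bound $B$ on the class sizes.

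First substitute ${\rm fpr}(x_i,G/H_j)=|x_i^G\cap H_j|/|x_i^G|$. Writing $a_{ij}=|x_i^G\cap H_j|$, the sum becomes
\[
\sum_{i=1}^m |x_i^G|^{1-t}\prod_{j=1}^t a_{ij}.
\]
Since $t\geqs 1$ and $|x_i^G|\geqs B$, each factor satisfies $|x_i^G|^{1-t}\leqs B^{1-t}$. This reduces the claim to showing $\sum_i \prod_j a_{ij} \leqs \prod_j A_j$.

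The reduced inequality follows by expanding the right-hand side. We have
\[
\prod_{j=1}^t\Bigl(\sum_{i=1}^m a_{ij}\Bigr)=\sum_{(i_1,\ldots,i_t)}\prod_{j} a_{i_j j},
\]
a sum of nonnegative terms that contains the diagonal terms $i_1=\cdots=i_t=i$. Therefore $\sum_i\prod_j a_{ij}\leqs\prod_j\sum_i a_{ij}\leqs\prod_j A_j$. Multiplying by $B^{1-t}$ gives the stated bound.

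There is no real obstacle here. The only points to check are the direction of the inequality $|x_i^G|^{1-t}\leqs B^{1-t}$, which needs the exponent to be nonpositive, i.e.\ $t\geqs 1$, and the nonnegativity of all terms in the expansion.
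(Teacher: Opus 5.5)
Your proof is correct. The paper gives no proof of its own: it quotes the lemma from \cite[Lemma 2.2]{AB}. Your argument is the standard one behind that lemma: write the summands as $|x_i^G|^{1-t}\prod_j a_{ij}$, bound $|x_i^G|^{1-t}$ by $B^{1-t}$, and then use $\sum_i\prod_j a_{ij}\leqs\prod_j\sum_i a_{ij}\leqs \prod_j A_j$. You are also right that the $k$ in the statement is a typo for $t$, and that the argument implicitly needs $B>0$.
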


\subsection{Computational methods}\label{ss:comp}

In this final preliminary section, we briefly discuss some of the computational methods we use in this paper. All of our computations are performed using {\sc Magma} \cite{magma} (version V2.28-21). Throughout this section, let $G$ be an almost simple classical group with socle $G_0$ and let $H \in \mathcal{S}$ be a maximal subgroup of $G$. 

To begin with, let us assume that we have constructed $G$ and a collection of subgroups $H_i \in \mathcal{S}$ for $i = 1, \ldots, t$, 
working with an appropriate permutation or matrix representation of $G$ in {\sc Magma}. Several methods for making such  constructions will be discussed in Example \ref{ex:con} below.

\begin{ex}\label{ex:qhat}
Setting $H = H_i$, we can use the function \texttt{QHat} presented below in order to calculate $\what{Q}(G,H)$. To do this, we first construct a set of representatives of the conjugacy classes in $H$ comprising elements of prime order and we then determine the fusion of the relevant $H$-classes in $G$. This allows us to compute $|x^G \cap H|$ for every element $x \in G$ of prime order, and we can compute $|x^G|$ by constructing the centraliser $C_G(x)$. Therefore, we can calculate
\[
{\rm fpr}(x,G/H) = \frac{|x^G \cap H|}{|x^G|}
\]
for each $x \in H$ of prime order and then $\what{Q}(G,H)$ is returned. Notice that our function avoids constructing a set of representatives of the conjugacy classes in $G$.

{\small
\begin{verbatim}
QHat:=function(G,H)
  cl:=Classes(H);
  a:=[i : i in [1..#cl] | IsPrime(cl[i][1])];
  b:=[];
  while #a ge 1 do
    c:=[i : i in a | IsConjugate(G,cl[i][3],cl[a[1]][3])];
    Append(~b,&+[cl[i][2]:i in c]^2*#Centraliser(G,cl[a[1]][3])/#G);
    a:=[i : i in a | i in c eq false];
  end while;
  return &+b;
end function;
\end{verbatim}}
\end{ex}

\begin{ex}\label{ex:random}
Given $G$ and a regular tuple $\tau = (H_1, \ldots, H_t)$, we can often verify that $\tau$ is regular by randomly searching for  elements $x_i \in G$ for $i = 1, \ldots, t-1$ such that
\[
\left(\bigcap_{i=1}^{t-1} H_i^{x_i}\right) \cap H_t = 1.
\]
To do this, we can use the following function, where the input is $G$ and the given tuple $S = [H_1, \ldots, H_t]$. This returns the text ``\texttt{S is regular}" if the random search is successful.
{\small
\begin{verbatim}
RandomReg:=function(G,S)
  repeat
    L:=S[#S];
    for i in [1..#S-1] do
      L:=L meet S[i]^Random(G);
    end for;
  until
    #L eq 1;
   return "S is regular";
end function;    
\end{verbatim}}
\end{ex}

\begin{ex}\label{ex:reg}
In some cases, we will need to show that a given tuple $\tau = (H_1, \ldots, H_t)$ is non-regular. Of course, this conclusion is immediate if 
\[
|G| > \prod_{i=1}^t |G:H_i|.
\]
If this inequality is not satisfied, then it may be feasible to use the function \texttt{RegOrbits} from \cite[Section 1.2]{AB_comp} to calculate the number $r$ of regular orbits of $H_t$ on $G/H_1 \times \cdots \times G/H_{t-1}$, noting that $\tau$ is non-regular if and only if $r=0$. As before, the input is $G$ and the tuple $S = [H_1, \ldots, H_t]$, and the function returns the integer $r$. 
\end{ex}

Finally, let us consider how to construct $G$ and a set of representatives of the relevant conjugacy classes of subgroups in $\mathcal{S}$, which we can use as input in any one of the functions discussed above.

\begin{ex}\label{ex:con}
We consider several concrete examples that will arise later in the paper, in order to illustrate a range of different construction methods.
\begin{itemize}\addtolength{\itemsep}{0.2\baselineskip}
\item[{\rm (i)}] If $G = \O_{14}^{+}(2)$ and $H = A_{16}$, then we can work with the natural matrix representation of $G$ and we can construct $H$ using the \texttt{ClassicalMaximals} function:
{\small
\begin{verbatim}
G:=OmegaPlus(14,2);
H:=ClassicalMaximals("O+",14,2:classes:={9})[1];
\end{verbatim}}

\vspace{1mm}

\noindent We can then use the function \texttt{QHat} defined above to compute 
\[
\hspace{15mm} \what{Q}(G,H) = \frac{26753167521}{4386258944},
\]
which only takes a few seconds. We can also verify that $\tau = (H,H)$ is regular (and thus $b(G,H) = 2$) via \texttt{RandomReg}, which may take several minutes to conclude. Notice that the prohibitively large index $|G:H| = 161695049711616$ means that  the function \texttt{RegOrbits} is not effective in this case (this is not a problem because we do not need to know the precise number of regular orbits of $H$ on $G/H$).

\vspace{1mm}

\item[{\rm (ii)}] Next assume $G = {\rm O}_{10}^{-}(2)$ and $H = {\rm M}_{12}.2$, in which case $H_0 < A_{12} < G_0$ is a non-maximal subgroup of $G_0$. Since $H = N_G(H_0)$ and $H_0$ is a maximal subgroup subgroup of $A_{12}$, which in turn is maximal in $G_0$, we can construct $G$ and $H$ as follows:
{\small
\begin{verbatim}
G:=SOMinus(10,2);
C:=ClassicalMaximals("O-",10,2:classes:={9});
M:=MaximalSubgroups(C[1]:OrderEqual:=95040);
H:=Normaliser(G,M[1]`subgroup);
\end{verbatim}}

\vspace{1mm}

\item[{\rm (iii)}] Suppose $G = {\rm PGSp}_{10}(3)$ and $H = {\rm U}_5(2).2$. Here we can use a variation of the previous method to construct $G$ and $H$ as permutation groups of degree $29524$:
{\small
\begin{verbatim}
f,G,K:=PermutationRepresentation(CSp(10,3):ModScalars:=true);
H:=f(ClassicalMaximals("S",10,3:classes:={9},normaliser:=true)[2]);
\end{verbatim}}

\vspace{1mm}

\item[{\rm (iv)}] Now assume $G = {\rm L}_9(2).2$ and $H = {\rm L}_3(4).D_{12}$. Here we can construct $G$ and $H$ as permutation groups of degree $1022$ as follows, using the fact that $G$ has a unique conjugacy class of maximal subgroups of order $|H|$ (see \cite[Table 8.55]{BHR}):
{\small
\begin{verbatim} 
G:=AutomorphismGroupSimpleGroup("L",9,2);
H:=MaximalSubgroups(G:OrderEqual:=12*#PSL(3,4))[1]`subgroup;
\end{verbatim}}

\vspace{1mm}

\item[{\rm (v)}] For our final example, suppose $G_0 = {\rm P\O}_8^{+}(3)$ and $G = G_0.\la \gamma\ra = G_0.2$, where $\gamma$ is an involutory graph automorphism. Now $\mathcal{S}$ contains two conjugacy classes of maximal subgroups isomorphic to ${\rm O}_8^{+}(2) = \O_{8}^{+}(2).2$, with representatives $H$ and $K$, and we can construct $G,H$ and $K$ as permutation groups of degree $3360$ as follows:

\vspace{1mm}

{\small
\begin{verbatim}
G:=LowIndexSubgroups(AutomorphismGroupSimpleGroup("O+",8,3),24)[2];
C:=[MaximalSubgroups(G)[i]`subgroup : i in [8,9]];
\end{verbatim}}

\vspace{1mm}

\noindent We can then use the \texttt{RegOrbits} function to verify that every triple (and therefore every pair) of subgroups with components in $\{H,K\}$ is non-regular. And we can use \texttt{RandomReg} to check that every $4$-tuple is regular.
\end{itemize}
\end{ex}

\section{Proof of Theorem \ref{t:hat}}\label{s:hat}

Our goal in this section is to prove Theorem \ref{t:hat}, which provides a major reduction for the proof of Theorem \ref{t:main}. As usual, let $G$ be a finite almost simple classical group over $\mathbb{F}_q$ with socle $G_0$ and natural module $V$. Write $q = p^f$ with $p$ a prime and set $n = \dim V$ and $Q = q/(q+1)$. Recall that we assume $G_0$ is one of the groups in \eqref{e:groups}. In addition, let $H$ be a core-free maximal subgroup of $G$ contained in the collection $\mathcal{S}$ (see Section \ref{ss:class}) and write $H_0$ for the socle of $H$. Recall that we define $\nu(H)$ as in \eqref{e:nuH} and we set
\[
\what{Q}(G,H) = \sum_{i=1}^{\ell} |x_i^G| \cdot {\rm fpr}(x_i,G/H)^2 = \sum_{i=1}^{\ell} \frac{|x_i^G \cap H|^2}{|x_i^G|}, 
\]
where $x_1, \ldots, x_{\ell}$ is a complete set of representatives of the conjugacy classes in $G$ of elements of prime order. It will also be helpful to recall Lemma \ref{l:favbound}, which we will use repeatedly.

We will need the following technical lemma. In parts (ii) and (iii), we write $J_k$ for a standard unipotent Jordan block of size $k$, and the notation $x = (J_n^{a_n}, \ldots, J_1^{a_1})$ indicates that the Jordan form of $x$ on $V$ comprises $a_i$ unipotent Jordan blocks of size $i$, for $i = 1, \ldots, n$.

\begin{lem}\label{l:sp6case}
Let $G_0 = {\rm P\O}_n^{\e}(q)$ with $n = 14-\delta_{3,p}$ and let $H \in \mathcal{S}$ with socle $H_0 = {\rm PSp}_6(q)$. Let $x \in H$ be an element of prime order $r$ and let $W$ be the natural module for $H_0$.
\begin{itemize}\addtolength{\itemsep}{0.2\baselineskip}
\item[{\rm (i)}] We have $\nu(H) \geqs 4$.
\item[{\rm (ii)}] If $n = 14$ then either $|x^G|>q^{44}$, or $x \in H \cap {\rm PGL}(V)$, $r=p$,  $x = (J_2,J_1^4)$ on $W$ and $x = (J_2^4,J_1^6)$ on $V$.

\item[{\rm (iii)}] If $n = 13$ then either $|x^G|>q^{44}$, or one of the following holds:

\vspace{1mm}

\begin{itemize}\addtolength{\itemsep}{0.2\baselineskip}
\item[(a)] $r=2$ and $|x^G|>\frac{1}{4}q^{39}$.
\item[(b)] $r=p$ and either $x = (J_2,J_1^4)$ on $W$ and $x = (J_2^4,J_1^5)$ on $V$, or $\nu(x) \geqs 6$ and $|x^G|>\frac{1}{4}Qq^{42}$.
\end{itemize}
\end{itemize} 
\end{lem}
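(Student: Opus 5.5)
The embedding in question is $V = \Lambda^3(W)/W$, where $W$ is the natural $6$-dimensional symplectic module for $H_0$. This is the $14$-dimensional irreducible quotient (the spin-type "third fundamental" module), and it drops to dimension $13$ when $p=3$. The module $V$ is orthogonal because $H_0$ preserves a quadratic form on it. We also note that $H \leqs N_G(H_0)$ with $H/H_0$ of order dividing $2f$ (diagonal and field automorphisms only, since ${\rm PSp}_6(q)$ has no graph automorphism for $q$ odd). My plan is to compute $\nu(x)$ for each prime order $x \in H$ by explicit weight and Jordan block computations on $\Lambda^3 W$, and then convert these into lower bounds on $|x^G|$ using \cite[Section 3]{FPR2} (Corollaries 3.38 and 3.49, and Proposition 3.22).

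\textbf{Semisimple elements in $H\cap {\rm PGL}(V)$.} Take $x$ with eigenvalues $\lambda_1^{\pm1},\lambda_2^{\pm1},\lambda_3^{\pm1}$ on $W \otimes k$. The weights of $\Lambda^3 W$ are the products $\lambda_1^{\pm1}\lambda_2^{\pm1}\lambda_3^{\pm1}$, each with multiplicity one, together with $\lambda_i^{\pm1}$, each with multiplicity two. Removing one copy of the $W$-weights leaves the $14$ weights on $V$ (and $13$ weights when $p=3$). I will bound the largest eigenspace multiplicity by a short case analysis. For example, an involution of type $(-I_2,I_4)$ gives eigenvalue multiplicities $(8,6)$ on the $14$-dimensional module, so $\nu \geqs 6$; the extreme case is $\lambda_1 = \omega$, $\lambda_2=\lambda_3=1$. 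For outer elements (field automorphisms) I use the standard bound $|x^G| \gtrsim |G_0|^{1/2}$ coming from \cite[Corollary 3.49]{FPR2} and its analogue, which gives roughly $q^{(n^2-n)/4}$ and far exceeds $q^{44}$.

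\textbf{Unipotent elements ($r=p$).} The key observation is that the minimal unipotent case $x = (J_2,J_1^4)$ on $W$ (a transvection) acts on $\Lambda^3 W$ with Jordan form $(J_2^{5},J_1^{10})$. After removing the $W$-part this gives $(J_2^4,J_1^6)$ on $V$, or $(J_2^4,J_1^5)$ when $p=3$. So $\nu(x)=4$, and every other unipotent class has strictly larger $\nu$. In particular:
\begin{itemize}
\item the next class, $(J_2^2,J_1^2)$, gives $\nu \geqs 6$;
\item the class $(J_2^3)$ gives $\nu \geqs 6$;
\item the classes with a $J_3$ give $\nu \geqs 6$.
\end{itemize}
Together with the semisimple analysis this proves (i). For (ii) and (iii) I apply \cite[Corollary 3.38]{FPR2}, which gives $|x^G| > \frac14 Q q^{\nu(x)(n-\nu(x)-1)}$ or similar. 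With $\nu \geqs 6$ and $n=14$ this yields an exponent of about $6\cdot 7=42$. To push this beyond $q^{44}$ I will use the exact Jordan forms and the centraliser orders in \cite[Chapter 3]{BG}. For $n=13$, the bound $\frac14 Qq^{42}$ is exactly what part (b) allows.

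\textbf{Involutions for $p$ odd.} An involution in $H$ lifts to an element of ${\rm Sp}_6(q)$ of type $(-I_{2k},I_{6-2k})$, or else squares to $-1$ and has eigenvalues $\pm i$. For $k=1,2,3$ I compute the resulting $\pm1$-eigenspace dimensions on $V$, which are $(8,6)$, $(6,8)$, or $(0,14)$ up to scalar. From these I read off $|x^G|$ exactly as $|{\rm SO}_n|/|{\rm SO}_a\times {\rm SO}_b|$. This gives $|x^G|>q^{44}$ for $n=14$, and $|x^G|>\frac14q^{39}$ (from $a=6$, $b=7$) for $n=13$, which is case (a).

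\textbf{Main obstacle.} I expect the hardest part to be the bookkeeping that confirms every non-transvection class genuinely exceeds $q^{44}$ when $n=14$, especially the unipotent classes with $\nu=6$ and the cases with small $q$. For these I will compute exact class sizes rather than rely on the generic bounds.
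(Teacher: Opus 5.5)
\textbf{There is a genuine gap: you are working with the wrong module.} Your strategy is the paper's: compute eigenvalue multiplicities and Jordan forms of each prime order element on $V$, then turn $\nu(x)$ and the precise form into bounds on $|x^G|$ via \cite{FPR2}. But you have applied it to the wrong representation.

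Here $V$ is the unique nontrivial composition factor of the $15$-dimensional module $\Lambda^2(W)$. One removes the trivial factor coming from the symplectic form, plus a second trivial factor when $p=3$; this is where $n=14-\delta_{3,p}$ comes from. On this module $-I_6$ acts trivially and there is an invariant quadratic form, so ${\rm PSp}_6(q)<{\rm P\Omega}_n^{\epsilon}(q)$.

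Your module $\Lambda^3(W)/W$ is a different object:
\begin{itemize}
\item The wedge pairing $\Lambda^3(W)\times\Lambda^3(W)\to\Lambda^6(W)$ is alternating. So for $p$ odd it gives ${\rm Sp}_6(q)<{\rm Sp}_{14}(q)$, the symplectic case $(\mathcal{D}6)$ treated separately in the proof of Lemma~\ref{l:t2_14}.
\item The drop to dimension $13$ at $p=3$ is a feature of $\Lambda^2(W)$ (where $p$ divides $\dim W/2$), not of $\Lambda^3(W)/W$.
\item For $p=2$, $\Lambda^3(W)/W$ is not even irreducible.
\end{itemize}

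Your numbers also do not follow from the module you name. The transvection $f_1\mapsto f_1+e_1$ moves exactly the six basis vectors $f_1\wedge a\wedge b$ with $a,b\in\{e_2,e_3,f_2,f_3\}$. So it acts on $\Lambda^3(W)$ as $(J_2^6,J_1^8)$, not $(J_2^5,J_1^{10})$, and on the $14$-dimensional quotient as $(J_2^5,J_1^4)$, with $\nu=5$. Likewise, from your own weight list, $(-I_2,I_4)$ acts as $(-I_{10},I_4)$, not with multiplicities $(8,6)$. The forms you quote, $(J_2^4,J_1^6)$, $(J_2^4,J_1^5)$ and $(-I_8,I_6)$, are exactly those of $\Lambda^2(W)$. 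There the eigenvalues on $V$ are the $\lambda_i^{\pm1}\lambda_j^{\pm1}$ with $i<j$, together with the eigenvalue $1$ of multiplicity $2-\delta_{3,p}$. The whole weight and Jordan block analysis has to be redone on that module.

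\textbf{Two further points need fixing even after you switch modules.}

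First, for $n=13$ an involutory field automorphism only satisfies $|x^G|>\frac{1}{4}q^{39}$. This is the bound of order $q^{\frac14 n(n-1)}$ that you quote, and it is below $q^{44}$ when $n=13$. So your claim that outer elements always far exceed $q^{44}$ is false there. These elements are exactly why (iii)(a) carries exponent $39$. By contrast, the involutions in $H\cap{\rm PGL}(V)$ have $\nu(x)\geqs 5$ and satisfy $|x^G|>\frac{1}{4}q^{40}$.

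Second, for (i) you only analyse lifts to ${\rm Sp}_6(q)$. A priori, $H\cap{\rm PGL}(V)$ could also contain images of elements of ${\rm PGSp}_6(q)\setminus{\rm PSp}_6(q)$, and you do not treat these. For $n=14$ the paper avoids any element-by-element work here. It reads off $G\cap{\rm PGL}(V)=G_0$ from Schr\"{o}der's tables. It then combines $\nu(H)\geqs 3$ from Theorem~\ref{gursax} with the fact that $G_0$ has no prime order elements with $\nu(x)=3$, which gives $\nu(H)\geqs 4$.
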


\begin{proof}
Let $W$ be the natural module for $H_0$ and observe that $V$ is the unique nontrivial composition factor of the $15$-dimensional module $\L^2(W)$. By Theorem \ref{gursax} we have $\nu(H) \geqs 3$. 

First assume $n = 14$. Here the conditions recorded in \cite[Tables 11.0.14, 11.0.16]{Schroder} imply that $G \cap {\rm PGL}(V) = G_0$ and thus $\nu(H) \geqs 4$ since there are no prime order elements $x \in G_0$ with $\nu(x) = 3$. In addition, if $x \not\in H \cap {\rm PGL}(V)$, then $x$ is a field or graph-field automorphism, so $q \geqs 4$ and \cite[Corollary 3.49]{FPR2} gives
\[
|x^G| > \frac{1}{8}q^{\frac{1}{4}n(n-1)} = \frac{1}{8}q^{91/2} \geqs q^{44}
\]
as required. 

Next assume $x \in H \cap {\rm PGL}(V)$ has order $r=p$. By calculating with the exterior-square $\Lambda^2(W)$, it is straightforward to show that the transvections in $H$ have Jordan form $(J_2^4,J_1^6)$ on $V$ (moreover, if $p=2$ then $x$ is an $a_4$-type involution, in the terminology of \cite{AS}; also see \cite[Section 3.5.4]{BG}). And in all other cases, either $p=2$ and $x$ has Jordan form $(J_2^6,J_1^2)$, or $p \geqs 3$ and $|x^G|$ is minimal when $x$ has Jordan form $(J_2^2,J_1^2)$ on $W$, in which case $x$ acts on $V$ as $(J_3,J_2^4,J_1^3)$. In the latter case, we can use \cite[Lemma 3.18]{FPR2} to show that $|x^G|>q^{44}$. And for $p=2$ we find that $x$ is a $c_6$-type involution, so $|x^G|>\frac{1}{2}q^{48}$ by \cite[Proposition 3.22]{FPR2}.

Similarly, if $x \in H \cap {\rm PGL}(V)$ is semisimple of prime order, then it is straightforward to show that either $\nu(x) = 6$ and thus $|x^G|>\frac{1}{4}q^{48}$ (minimal if $p\ne 2$ and $x \in H$ is an involution of type $(-I_2,I_4)$, which acts on $V$ as $(-I_8,I_6)$), or $\nu(x) \geqs 8$ and we have $|x^G|>q^{44}$.

Now assume $n = 13$, so $p=3$. If $x$ is a field automorphism of prime order $r$, then \cite[Lemma 3.48]{FPR2} gives $|x^G|>\frac{1}{4}q^{52}$ if $r \geqs 3$ and we get $|x^G|>\frac{1}{4}q^{39}$ if $r=2$. Now suppose $x \in H$ is unipotent of order $3$. Here we calculate that $x = (J_2,J_1^4)$ has Jordan form $(J_2^4,J_1^5)$ on $V$. And for all other unipotent elements, we find that $|x^G|$ is minimal when $x$ has Jordan form $(J_2^2,J_1^2)$, in which case $x$ acts on $V$ as $(J_3,J_2^4,J_1^3)$ and we get $|x^G|>\frac{1}{4}Qq^{42}$. Finally, suppose $x \in H$ is semisimple of prime order $r$. If $r=2$ then $\nu(x) \geqs 5$ and thus $|x^G|>\frac{1}{4}q^{40}$. And if $r \geqs 5$ is odd, then $\nu(x) \geqs 6$ and $|x^G|>\frac{1}{2}Qq^{48}$.
\end{proof}

\begin{lem}\label{l:t2_14}
The conclusion to Theorem \ref{t:hat} holds if $n \geqs 14$.
\end{lem}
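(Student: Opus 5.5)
We have to show that if $n \geqs 14$, then $\what{Q}(G,H) \geqs 1$ holds only for the two entries of Table \ref{tab:new} with $n = 14$, namely $(G,H) = (\O_{14}^{+}(2),A_{16})$ and $({\rm Sp}_{12}(2)\ldots)$. The latter has $n=12$, so it is not relevant here. We also have to confirm that none of the cases in \cite[Table 1]{BGS} with $n \geqs 14$ arise. The plan is to follow the proof of \cite[Theorem 1]{BGS} for $n \geqs 14$, where the upper bound on $Q(G,H)$ was in fact obtained by bounding $\what{Q}(G,H)$. The main work is to repackage those estimates so that they give $\what{Q}(G,H)<1$ directly.

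\textbf{Generic case.} We split according to whether $H \in \mathcal{A}$ or $H \not\in \mathcal{A}$; note $\mathcal{C}$ is empty for $n \geqs 14$. First suppose $H \not\in \mathcal{A}$. Theorem \ref{gursax} gives $\nu(H) > \max\{2,\sqrt{n}/2\}$, so $\nu(H) \geqs 3$, and Proposition \ref{p:xgbd} yields a lower bound $|x^G| > B := \frac{1}{a}Qq^{3b(n-3-c)}$ for every $x \in H$ of prime order. Since $\sum_x |x^G \cap H| < |H|$, Lemma \ref{l:favbound} with $t=2$ gives
\[
\what{Q}(G,H) < |H|^2/B.
\]
We then need an upper bound on $|H|$. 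By \cite{Lie} type results (as used in \cite{BGS}), either $|H| < q^{2n+4}$, or $H_0$ is a group of Lie type in defining characteristic with a small-rank module. For large $n$, this already suffices once we use $\nu(H) > \sqrt{n}/2$ to sharpen $B$ to roughly $q^{n\sqrt{n}/2}$ via \cite[Section 3]{FPR2}. For $14 \leqs n$ of moderate size, one must instead use the explicit lists of Schr\"{o}der and Rogers for $n \leqs 17$. In the latter case, we treat each candidate $H_0$ individually, bounding $i_r(H)$ and the relevant class sizes.

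\textbf{Special cases.} The awkward small-$n$ cases are those where $|H|$ is large relative to $q^{3(n-3)}$. Typical examples are $H_0 = {\rm PSp}_6(q)$ in $G_0 = {\rm P\O}_{14-\delta_{3,p}}^{\e}(q)$, ${\rm L}_2$- or ${\rm L}_3$-type adjoint or symmetric-square embeddings, and spin modules. For $H_0={\rm PSp}_6(q)$ we will use Lemma \ref{l:sp6case}. We separate off the transvection class of $H$, which contributes at most $i_p$ of order $q^{6}$ elements with $|x^G| \approx q^{24}$. We apply Lemma \ref{l:favbound} separately to that class, with $A \approx q^6$ and $B \approx q^{24}$, and to the remaining elements, with $B = q^{44}$ (or the refined bounds in part (iii) when $n=13$, though $n=13$ is outside this lemma) and $A = |H| \approx q^{21}$. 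Summing the two contributions gives something like $q^{-12}+q^{-2}$, which is less than $1$. Analogous splittings will handle the other Lie-type candidates.

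\textbf{$\mathcal{A}$ and the main obstacle.} For $H \in \mathcal{A}$ with $H_0 = A_m$, $m \in \{n+1,n+2\}$, the difficulty is elements of small support such as transpositions and $3$-cycles, which have $\nu(x) = 1$ or $2$. Following \cite{BGS}, we partition the prime order elements of $H$ by support size $s$. The number of such elements is at most roughly $m^s$, while $|x^G|$ grows like $q^{s(n-s)/2}$ or similar, using the explicit embedding. Summing over $s$ with Lemma \ref{l:favbound} gives $\what{Q}<1$ for $n \geqs 15$ or $q \geqs 3$. The remaining case $(G,H)=(\O_{14}^{+}(2),A_{16})$, and its ${\rm O}_{14}^{+}(2)$ analogue $S_{16}$, will be computed exactly with \texttt{QHat} as in Example \ref{ex:con}(i), giving $\what{Q} \approx 6.10$; here $b(G,H)=2$, so the case belongs in Table \ref{tab:new}. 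I expect the main obstacle to be the $\mathcal{A}$ and small-$n$ Lie-type cases near $n=14$, where the generic estimates are tight and exact fixed point counts may be needed.
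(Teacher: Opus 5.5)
Your overall architecture matches the paper's. For $n\geqs 15$ you re-read the proof of \cite[Theorem 1]{BGS} and check that each case is closed by a bound on $\what{Q}(G,H)$. For $n=14$ you work through Schr\"{o}der's tables using Theorem~\ref{gursax}, Proposition~\ref{p:xgbd} or Remark~\ref{r:pso}, and Lemma~\ref{l:favbound}, split off the transvections via Lemma~\ref{l:sp6case}(ii) when $H_0={\rm PSp}_6(q)$, and compute the $A_{16}$ case in {\sc Magma}.

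\textbf{The exceptional case is misclassified.} You lump $S_{16}<{\rm O}_{14}^{+}(2)$ in with $A_{16}<\O_{14}^{+}(2)$ and assert $b(G,H)=2$ and $\what{Q}\approx 6.10$ for both. In fact $b({\rm O}_{14}^{+}(2),S_{16})=3$. This pair is an entry of \cite[Table 1]{BGS}; it is the non-regular pair showing that $n\geqs 15$ is needed in Corollary~\ref{c:main}. So it falls under part (i) of Theorem~\ref{t:hat} and must not be put into Table~\ref{tab:new}. Your opening claim that no case of \cite[Table 1]{BGS} with $n\geqs 14$ arises is therefore false. There is also nothing to ``confirm'' for such cases: $b(G,H)\geqs 3$ forces $Q(G,H)=1\leqs\what{Q}(G,H)$ automatically, so no computation is needed there. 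The value $\what{Q}(G,H)\approx 6.10$ and the property $b(G,H)=2$ belong only to $G=\O_{14}^{+}(2)$, $H=A_{16}$. That is the unique entry of Table~\ref{tab:new} with $n\geqs 14$. As written, your conclusion would add $({\rm O}_{14}^{+}(2),S_{16})$ to Table~\ref{tab:new} and so contradict the statement being proved.

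\textbf{The generic argument does not cover $n\geqs 15$.} It cannot stand in for \cite{BGS}. With only $|H|<q^{2n+4}$ and $\nu(H)=s>\sqrt{n}/2$, the estimate $|H|^2/B<1$ needs roughly $s(n-s)>4n+8$ for ${\rm PSp}_n(q)$, and $s(n-s-1)>4n+8$ in the orthogonal case. The symplectic inequality fails for every even $n\leqs 62$: for example $n=18$ gives $s\geqs 3$ and $3\cdot 15=45<80$. Meanwhile explicit lists of maximal $\mathcal{S}$-subgroups exist only for $n\leqs 17$, and for $n\in\{16,17\}$ they are only almost complete. So the intermediate range, and really all $n\geqs 15$, must come from the finer case-by-case estimates inside \cite{BGS}, exactly as the paper does. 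Some of those cases, such as ${\rm Sp}_n(2)$ with $H=S_{n+2}$ and $n\in\{16,20,24\}$, are settled there by direct computation rather than by a support-size summation.

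\textbf{Minor points.}
\begin{itemize}
\item The transvections of ${\rm PSp}_6(q)$ act on $V$ as $(J_2^4,J_1^6)$ with $|x^G|>\frac{1}{4}Qq^{36}$, not $|x^G|\approx q^{24}$. Your figure still works as an underestimate.
\item You should keep the factor $\log_2q$ in $|H|<\log_2q\cdot q^{21}$; this is where the condition $q\geqs 4$ enters.
\end{itemize}
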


\begin{proof}
First assume $n \geqs 15$. By the main theorem of \cite{BGS} we have $b(G,H) = 2$ and by carefully inspecting the proof, one can check that the argument gives $\what{Q}(G,H) < 1$ in all cases. 

For example, suppose $G = {\rm Sp}_n(2)$ and $H = S_{n+2}$, where $n \geqs 16$ and $n \equiv 0 \imod{4}$. Here we can identify the natural module $V$ for $G$ with the fully deleted permutation module for $H$ over $\mathbb{F}_2$ (see the first row of Table \ref{atab}). For $n \geqs 28$, the proof of \cite[Lemma 3.2]{BGS} shows that
\[
{\rm fpr}(x,G/H) < |x^{G}|^{-\frac{2}{3}}
\]
for all $x \in H$ of prime order. Therefore, 
\[
\what{Q}(G,H) = \sum_{i=1}^{\ell} |x_i^G| \cdot {\rm fpr}(x_i,G/H)^2 < \sum_{i=1}^{\ell} |x_i^G|^{-\frac{1}{3}}
\]
and thus \cite[Proposition 2.2]{B07} implies that $\what{Q}(G,H) <1$ as required. And the claim for $n \in \{16,20,24\}$ can be checked directly, as explained in the proof of \cite[Lemma 3.2]{BGS}.

So to complete the proof of the lemma, we may assume $n=14$, noting that the subgroups in $\mathcal{S}$ are determined (up to conjugacy) in \cite{Schroder}, with convenient tables presented in \cite[Chapter 11]{Schroder}.

Suppose $G_0 = {\rm L}_{14}^{\e}(q)$ is a linear or unitary group. We have $\nu(H) \geqs 3$ by Theorem \ref{gursax}, so $|x^G|>\frac{1}{2}Qq^{66}$ for all $x \in H$ of prime order by Proposition \ref{p:xgbd}. In addition, by inspecting \cite[Tables 11.0.8, 11.0.10]{Schroder} we see that $q \geqs 3$ and $|H| \leqs |{\rm Sp}_6(3)|$. Therefore, Lemma \ref{l:favbound} implies that
\[
\what{Q}(G,H) < 2|{\rm Sp}_6(3)|^2Q^{-1}q^{-66} < 1
\]
for all $q \geqs 3$. The result follows.

The case $G_0 = {\rm PSp}_{14}(q)$ is very similar. By combining Theorem \ref{gursax} with Proposition \ref{p:xgbd}, we see that $|x^G|>\frac{1}{4}Qq^{33}$ for all $x \in H$ of prime order. So if $|H|<q^{15}$, then 
\[
\what{Q}(G,H) < 4Q^{-1}q^{-3} < 1
\]
for all $q \geqs 2$. By inspecting \cite[Table 11.0.12]{Schroder}, we may assume $H_0 = {\rm PSp}_6(q)$ with $q$ odd. This is the case labelled $(\mathcal{D}6)$ in \cite[Table 11]{BGS} and the proof of \cite[Lemma 11]{BGS} shows that $\what{Q}(G,H) <1$.

To complete the proof of the lemma, we may assume $G_0 = {\rm P\O}_{14}^{\e}(q)$. If $G_0 = \O_{14}^{+}(2)$ and $H_0 = A_{16}$, then $b(G,H) = 2+\a$, where $\a=1$ if $G = {\rm O}_{14}^{+}(2)$ and $H = S_{16}$, otherwise $\a = 0$. And for $G = G_0$ we can use {\sc Magma} \cite{magma} (see Example \ref{ex:con}(i)) to show that
\[
\what{Q}(G,H) = \frac{26753167521}{4386258944} > 1,
\]
which explains why the pair $(G,H) = (\O_{14}^{+}(2),A_{16})$ is recorded in Table \ref{tab:new}. In the same way, we can check that $\what{Q}(G,H)<1$ for all the remaining cases with $q=2$. 

Now assume $q \geqs 3$. If $H_0 = A_{15}$ then $\e = -$, $q \geqs 7$ and the proof of \cite[Lemma 3.5]{BGS} gives $\what{Q}(G,H)<1$. Therefore, we may assume $H \not\in \mathcal{A}$ (see Table \ref{atab}), so $\nu(H) \geqs 3$ by Theorem \ref{gursax} and thus the bound $|x^G|>\frac{1}{2d}q^{33}$ in Remark \ref{r:pso} holds for all $x \in H$ of prime order, where $d = (2,q-1)$. So if 
$|H|<2\log_pq \cdot q^{15}$ then
\[
\what{Q}(G,H) < 8d(\log_pq)^2q^{-3} < 1
\]
for all $q \geqs 3$. 

Finally, suppose $q \geqs 3$ and $|H| \geqs 2\log_pq \cdot q^{15}$. By inspecting \cite[Tables 11.0.14, 11.0.16]{Schroder} we deduce that $H_0 = {\rm PSp}_6(q)$ and the conditions in the relevant tables indicate that $q \geqs 4$. This case is listed as $(\mathcal{D}7)$ in \cite[Table 11]{BGS} and the main theorem of \cite{BGS} gives $b(G,H) = 2$. However, the details for this case were omitted in the proof of \cite[Lemma 6.6]{BGS} and so we need to address this omission here in order to verify the bound $\what{Q}(G,H)<1$. 

Now $|H|<\log_2q\cdot q^{21}$ and so Lemma \ref{l:favbound} implies that the contribution to $\widehat{Q}(G,H)$ from the elements $x \in G$ with $|x^G|>q^{44}$ is less than $\a = (\log_2q)^2q^{-2}$. Now assume $x \in H$ has prime order and $|x^G| \leqs q^{44}$. Then Lemma \ref{l:sp6case}(ii) implies that $x \in H \cap {\rm PGL}(V)$ is unipotent with Jordan form $(J_2,J_1^4)$ on the natural module for $H_0$. Here $|x^G|>\frac{1}{4}Qq^{36}$ and there are fewer than $q^6$ such elements in $H$, so their contribution to $\what{Q}(G,H)$ is less than
\[
q^{12} \cdot 4Q^{-1}q^{-36} = 4Q^{-1}q^{-24} = \b.
\]
Bringing these estimates together, we conclude that 
$\what{Q}(G,H) < \a+\b<1$ for all $q \geqs 4$ and the result follows.
\end{proof}

\begin{lem}\label{l:t2_13}
The conclusion to Theorem \ref{t:hat} holds if $n =13$.
\end{lem}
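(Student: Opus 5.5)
For $n=13$ the natural module is odd-dimensional, so $G_0 = {\rm L}_{13}^{\e}(q)$ or $G_0 = \O_{13}(q)$ with $q$ odd. The maximal subgroups in $\mathcal{S}$ are listed, up to conjugacy, in \cite[Chapter 11]{Schroder}. We need to show that $\what{Q}(G,H)<1$ in every case, since no pair with $n=13$ appears in Table \ref{tab:new} or \cite[Table 1]{BGS}. The plan follows the proof of Lemma \ref{l:t2_14}.

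\textbf{Generic cases.} Start with $G_0 = {\rm L}_{13}^{\e}(q)$. Theorem \ref{gursax} gives $\nu(H) \geqs 3$, since $\mathcal{A}$ and $\mathcal{C}$ contain no linear or unitary groups in dimension $13$. Proposition \ref{p:xgbd} then gives $|x^G| > \frac{1}{2}Qq^{60}$ for every $x \in H$ of prime order. Inspecting the relevant tables in \cite{Schroder}, $|H|$ is tiny in comparison; for example, $H_0$ could be ${\rm L}_3(3)$, ${\rm L}_2(25)$, ${\rm Sz}(8)$ or ${\rm L}_2(27)$, all far below $q^{30}$. So Lemma \ref{l:favbound}, applied with $B = \frac{1}{2}Qq^{60}$ and $A = |H|$, gives $\what{Q}(G,H) < 2|H|^2Q^{-1}q^{-60} < 1$ immediately.

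For $G_0 = \O_{13}(q)$, first treat $H \in \mathcal{A}$, i.e. $H_0 = A_{14}$ or $A_{15}$ with $q=p$ as in Table \ref{atab}. Here I will quote the fixed point ratio bounds from the proof of \cite[Lemma 3.5]{BGS}, which are of the form ${\rm fpr}(x,G/H)<|x^G|^{-c}$ with $c>1/2$, together with \cite[Proposition 2.2]{B07}. For the smallest primes I would instead check directly, following \cite{BGS}, possibly with {\sc Magma}. For $H \notin \mathcal{A}$, Theorem \ref{gursax} gives $\nu(H)\geqs 3$, and Proposition \ref{p:xgbd} gives $|x^G|>\frac14 Qq^{27}$. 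Hence $\what{Q}(G,H) < 4|H|^2Q^{-1}q^{-27}<1$ whenever $|H|$ is at most roughly $q^{13}/\log_p q$. By the tables in \cite{Schroder}, this covers everything except the large subgroup with socle $H_0={\rm PSp}_6(q)$, $p=3$, acting on the $13$-dimensional composition factor of $\Lambda^2(W)$.

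\textbf{The case $H_0 = {\rm PSp}_6(q)$, $p=3$.} This is the main obstacle, and it is exactly what Lemma \ref{l:sp6case}(iii) was written for. We have $|H| < 2\log_3 q\cdot q^{21}$. I will split the prime order classes into four groups and bound each contribution separately with Lemma \ref{l:favbound}.

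\emph{Classes with $|x^G|>q^{44}$.} These contribute less than $4(\log_3q)^2 q^{42-44}$.

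\emph{Involutions with $|x^G|>\frac14 q^{39}$.} The number of involutions in $H$ is at most about $q^{12}$, the dominant class being of type $(-I_2,I_4)$ on $W$ (plus field automorphisms). So the contribution is at most about $4q^{24-39}$.

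\emph{Transvection-type unipotent elements.} These are the elements with $x=(J_2,J_1^4)$ on $W$. There are fewer than $q^6$ of them in $H$, and they act on $V$ as $(J_2^4,J_1^5)$. For such an element $|x^G|$ is roughly $q^{34}$; I would pin this down as $|x^G|>\frac14 Qq^{34}$ via \cite{FPR2}. The contribution is then below $4Q^{-1}q^{12-34}$.

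\emph{Remaining unipotent elements.} These have $|x^G|>\frac14Qq^{42}$ and number at most $|H|$. The contribution is below $4Q^{-1}(\log_3 q)^2 q^{42-42}$, which is too weak as stated. To fix this, I would refine the count: the unipotent elements of $H$ number at most $q^{18}$ (by Steinberg), giving a contribution of about $4Q^{-1}q^{36-42}$.

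Summing the four terms, the total is less than $1$ for all $q \geqs 3$. For $q=3$, if the estimates are tight, I would simply compute $\what{Q}(G,H)$ in {\sc Magma} using \texttt{QHat}, as in Example \ref{ex:qhat}.
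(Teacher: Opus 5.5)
There is a genuine gap in the linear/unitary case. Your list of possible socles omits the largest one, $H_0 = {\rm PSp}_6(3)$, which comes from the $13$-dimensional Weil module; the paper records that $|H| \leqs |{\rm Sp}_6(3)|$ in this case. For $q \geqs 3$ the omission is harmless, since $2|{\rm Sp}_6(3)|^2Q^{-1}q^{-60}<1$. But for $q=2$ we have $G_0 = {\rm U}_{13}(2)$ and $H_0 = {\rm PSp}_6(3)$ is the only possibility. Here $|H_0| = 4585351680 > 2^{32}$, so your bound $2|H|^2Q^{-1}q^{-60}$ exceeds $3 \cdot 2^{4} = 48$. So your claim that $|H|$ is far below $q^{30}$, and hence that Lemma \ref{l:favbound} applies immediately, is false for this pair. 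The pair needs its own class-by-class analysis: determine how the prime order elements of $H$ act on $V$ and bound each contribution separately, as the paper does for ${\rm J}_3 < {\rm U}_9(2)$ and ${\rm Suz} < {\rm U}_{12}(2)$. The paper handles it by appealing to the proof of \cite[Lemma 4.4]{BGS}.

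The rest of your proposal follows the paper's route: the $\mathcal{A}$ cases via \cite{BGS}, the generic bound for small $|H|$, and Lemma \ref{l:sp6case}(iii) for $H_0 = {\rm PSp}_6(q)$ with $p=3$. Your finer split of the classes with $|x^G| \leqs q^{44}$ (involutions separately, non-transvection unipotents via Steinberg's count $q^{18}$) is fine. The paper simply lumps these together using $i_2(H)+i_p(H) < 2q^{18}$ and the common bound $\frac{1}{4}q^{39}$.

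Two smaller corrections. First, an element with Jordan form $(J_2^4,J_1^5)$ on $V$ has a $46$-dimensional centraliser in ${\rm SO}_{13}$, so $|x^G|$ is about $q^{32}$, not $q^{34}$; your claimed bound $\frac{1}{4}Qq^{34}$ is false. With the correct bound $|x^G| > \frac{1}{4}q^{32}$ (as in the paper), the contribution is still at most $4q^{-20}$. Second, your cutoff of roughly $|H| \leqs q^{13}/\log_p q$ does not give $\what{Q}(G,H)<1$ at $q=3$. The paper avoids this by checking $q=3$ directly in {\sc Magma} and using the more comfortable cutoff $|H| < \log_3 q \cdot q^{10}$ for $q \geqs 5$.
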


\begin{proof}
First assume $G_0 = {\rm L}_{13}^{\e}(q)$ and let $H \in \mathcal{S}$. Then $|x^G|>\frac{1}{2}Qq^{60}$ for all $x \in H$ of prime order (via Theorem \ref{gursax} and Proposition \ref{p:xgbd}) and by inspecting \cite[Tables 11.0.2, 11.0.4]{Schroder} we see that $|H| \leqs |{\rm Sp}_6(3)|$. Therefore, Lemma \ref{l:favbound} yields
\[
\what{Q}(G,H) < 2|{\rm Sp}_6(3)|^2Q^{-1}q^{-60} < 1
\]
for all $q \geqs 3$. Finally, if $q = 2$ then $G_0 = {\rm U}_{13}(2)$,  $H_0 = {\rm PSp}_6(3)$ and the proof of \cite[Lemma 4.4]{BGS} shows that $\what{Q}(G,H)<1$.

For the remainder, we may assume $G_0 = \O_{13}(q)$. For $q=3$ we can use {\sc Magma} (as in Example \ref{ex:con}(i)) to verify the bound $\widehat{Q}(G,H)<1$ for all $H \in \mathcal{S}$. Now assume $q \geqs 5$. If $H \in \mathcal{A}$, then $H_0 = A_{15}$ or $A_{14}$ and the proof of \cite[Lemma 3.5]{BGS} gives $\what{Q}(G,H)<1$. In the remaining cases, Theorem \ref{gursax} gives $\nu(H) \geqs 3$ and thus $|x^G|>\frac{1}{4}Qq^{27}$ for all $x \in H$ of prime order by Proposition \ref{p:xgbd}. If $|H| < \log_3q\cdot q^{10}$, we deduce that 
\[
\what{Q}(G,\tau) < 4Q^{-1}(\log_3q)^2\cdot q^{-7} < 1
\]
for all $q \geqs 5$. So by inspecting \cite[Table 11.0.6]{Schroder}, we see that we may assume $H_0 = {\rm PSp}_6(q)$ and $p=3$. This corresponds to the case labelled $(\mathcal{D}7)$ in \cite[Table 11]{BGS}, but the details for this case were omitted in the proof of \cite[Lemma 6.6]{BGS}.

As in the proof of Lemma \ref{l:t2_14}, the contribution to $\what{Q}(G,H)$ from the prime order elements $x \in G$ with $|x^G|>q^{44}$ is less than $\a = (\log_3q)^2q^{-2}$. In view of Lemma \ref{l:sp6case}(iii), using the fact that $i_2(H) +i_p(H) < 2q^{18}$, 
we see that the remaining elements with $|x^G|>\frac{1}{4}q^{39}$ contribute less than $\b = (2q^{18})^2 \cdot 4q^{-39} = 16q^{-3}$. Finally, if $x$ has Jordan form $(J_2^4,J_1^5)$ on $V$ then $|x^G|>\frac{1}{4}q^{32}$ and Lemma \ref{l:sp6case}(iii) implies that there are no more than $q^6$ such elements in $H$, so the final contribution is no more than $\gamma = 4q^{-20}$. It is now a routine exercise to check that $\what{Q}(G,H)<\a+\b+\gamma<1$ for all $q \geqs 5$.
\end{proof}

\begin{lem}\label{l:t2_12}
The conclusion to Theorem \ref{t:hat} holds if $n =12$.
\end{lem}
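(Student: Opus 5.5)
We follow the template of Lemmas \ref{l:t2_14} and \ref{l:t2_13}, now using the list of maximal subgroups in $\mathcal{S}$ for $n=12$ from \cite[Chapter 8]{BHR}. The cases split according to $G_0$. Table \ref{tab:new} has no entry with $n=12$, so in each case the goal is to show that either $(G,H)$ appears in \cite[Table 1]{BGS} with $b(G,H) \geqs 3$, in which case $\what{Q}(G,H) \geqs 1$ automatically since $Q \leqs \what{Q}$, or else $\what{Q}(G,H)<1$.

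\textbf{Generic step.} For $H \not\in \mathcal{A}$ (note $\mathcal{C}$ is empty for $n=12$), Theorem \ref{gursax} gives $\nu(H) \geqs 3$. Proposition \ref{p:xgbd} then gives
\begin{itemize}
\item $|x^G|>\frac{1}{2}Qq^{54}$ for $G_0={\rm L}_{12}^{\e}(q)$;
\item $|x^G|>\frac{1}{4}Qq^{27}$ for ${\rm PSp}_{12}(q)$;
\item $|x^G|>\frac{1}{2d}q^{27}$ for ${\rm P\O}_{12}^{\pm}(q)$, using Remark \ref{r:pso}.
\end{itemize}
Lemma \ref{l:favbound}, with $A=|H|$ (times a factor $\log_pq$ for field automorphisms where needed), yields $\what{Q}(G,H)<1$ whenever $|H|$ is below roughly $q^{13}$ in the symplectic and orthogonal cases, and for essentially every $H$ in the linear and unitary cases. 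This leaves three families of remaining cases.
\begin{itemize}
\item The subgroups in $\mathcal{A}$: $S_{14}$ in ${\rm Sp}_{12}(2)$, $A_{13}$ in $\O_{12}^{-}(2)$, and the odd-$p$ cases $A_{13},A_{14}$ in ${\rm P\O}_{12}^{\e}(p)$.
\item The large subgroups with $|H| \geqs q^{13}$, such as $H_0={\rm L}_2(q)$ acting on a symmetric power, ${\rm Sp}_4(q)$ or ${\rm L}_3(q)$ embeddings, and ${\rm U}_4(q)$ or ${\rm Sp}_6(q)$-type representations if present in \cite{BHR}.
\item The small values $q \in \{2,3\}$, where the generic inequality fails.
\end{itemize}

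\textbf{Alternating and small-$q$ cases.} For the $\mathcal{A}$ cases with $p$ large, we quote the fixed point ratio bound ${\rm fpr}(x,G/H)<|x^G|^{-2/3}$ (or its analogue) from the proofs of \cite[Lemmas 3.2, 3.5]{BGS}. Combined with \cite[Proposition 2.2]{B07} as in Lemma \ref{l:t2_14}, this gives $\what{Q}<1$. For small $p$ and small $q$ (in particular ${\rm Sp}_{12}(2)$ with $S_{14}$, ${\rm O}_{12}^{-}(2)$, and the $q=3$ orthogonal groups), we compute $\what{Q}(G,H)$ directly in {\sc Magma} via \texttt{QHat} and the constructions in Example \ref{ex:con}. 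Here we expect ${\rm Sp}_{12}(2)$ with $H=S_{14}$ to give $\what{Q}>1$ while $b=2$, which is precisely a Table \ref{tab:new} entry. So a $\what{Q}>1$ output is consistent with the theorem, provided we check via \texttt{RandomReg} that it is base-two, or else that it is already in \cite[Table 1]{BGS}.

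\textbf{Large-subgroup cases (main obstacle).} For the remaining subgroups of large order we mimic Lemma \ref{l:sp6case}. We compute the Jordan forms or eigenspace dimensions of prime order elements of $H_0$ on $V$ from the relevant module: for example the symmetric power for ${\rm L}_2(q)$, or $W \otimes W^{*}$ and tensor-type modules for the rank-2 groups. This lets us partition the elements of $H$ into a bulk class with $|x^G|$ large, contributing $O((\log q)^2q^{-c})$, and a few small classes. The small classes are typically long root or transvection images with $\nu(x)$ about $4$ to $6$; there are at most about $q^{\dim}$ such elements, and their contribution is bounded separately. Summing these contributions should give $\what{Q}<1$ for all but a handful of $q$, which are then handled computationally.

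The hardest step is this element-by-element analysis for the largest irreducible subgroups. For these, the margin between $|H|^2$ and the minimal $|x^G|$ is thin, and the exact multiplicities of the small classes in $H$ must be estimated carefully.
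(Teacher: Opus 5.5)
Your skeleton matches the paper. For $H\notin\mathcal{A}$ you combine $\nu(H)\geqs 3$ (Theorem \ref{gursax}; $\mathcal{C}$ is empty for $n=12$) with Proposition \ref{p:xgbd}, Remark \ref{r:pso} and Lemma \ref{l:favbound}. You quote \cite{BGS} for the $\mathcal{A}$-cases in odd characteristic and for $A_{13}<\O_{12}^{-}(2)$, where $b=3$. You use {\sc Magma} at $q=2$, which yields the entry $({\rm Sp}_{12}(2),S_{14})$. However, your opening claim that Table \ref{tab:new} has no entry with $n=12$ is false, and it contradicts your own later remark. Taken literally, it would commit you to proving $\what{Q}(G,H)<1$ for $S_{14}<{\rm Sp}_{12}(2)$, where in fact $\what{Q}(G,H)\approx 8.32$.

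The substantive gap is that you never determine which subgroups actually occur. As a result, your ``main obstacle'' targets cases that do not exist, while the cases that need real work are covered only by an unsupported appeal to computation. The paper reads the following bounds off \cite[Chapter 8]{BHR}:
\begin{itemize}
\item $|H|\leqs 2|{\rm Suz}|$ in the linear and unitary cases;
\item $|H|<q^{13}$ for ${\rm PSp}_{12}(q)$ with $q\geqs 4$;
\item $|H|<q^{8}$ for the non-$\mathcal{A}$ orthogonal subgroups, which only arise for $q\geqs 5$ odd.
\end{itemize}
So the generic estimate disposes of all linear and unitary groups with $q\geqs 3$, all symplectic groups with $q\geqs 4$, and all non-$\mathcal{A}$ orthogonal cases. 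There are no large defining-characteristic subgroups (symmetric powers, ${\rm Sp}_4(q)$, ${\rm L}_3(q)$, \dots) to analyse.

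Besides the $q=2$ computations (${\rm Sp}_{12}(2)$ with $S_{14}$ or ${\rm L}_2(25).2$, and $\O_{12}^{-}(2)$ with ${\rm L}_3(3)$), what remains is $H_0={\rm Suz}$ in $G_0={\rm U}_{12}(2)$ and in $G_0={\rm PSp}_{12}(3)$. There the generic bound fails by many orders of magnitude and \cite{BGS} omits the details, so a genuine argument is needed. You propose running \texttt{QHat} directly on $G$, but for $G={\rm U}_{12}(2).2$ the outer involution acts semilinearly on $V$, and you give no reason to think this is feasible. The paper instead does the following:
\begin{itemize}
\item It constructs $3.{\rm Suz}<{\rm SU}_{12}(2)$ and $2.{\rm Suz}<{\rm Sp}_{12}(3)$ with \texttt{ClassicalMaximals}.
\item It reads off the action on $V$ of each prime order class. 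The class \texttt{2A} acts as $(J_2^4,J_1^4)$, resp. $(-I_4,I_8)$; elements of order $3$ have $\nu(x)\geqs 6$; elements of order at least $5$ have $\nu(x)\geqs 8$.
\item It bounds the outer involutions separately and checks $\sum_{i=1}^4 a_i^2b_i^{-1}<1$.
\end{itemize}
Your bulk-versus-small-classes partition is exactly the right tool. It just has to be applied to ${\rm Suz}$ at $q\in\{2,3\}$, not to hypothetical large Lie-type subgroups.
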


\begin{proof}
First assume $G_0 = {\rm L}_{12}^{\e}(q)$. Here $\nu(H) \geqs 3$ and $|H| \leqs 2|{\rm Suz}|$ (see \cite[Tables 8.77, 8.79]{BHR}), so 
$|x^G|>\frac{1}{2}Qq^{54}$ for all $x \in H$ of prime order and it follows that 
\[
\what{Q}(G,\tau) < 8|{\rm Suz}|^2Q^{-1}q^{-54} < 1
\]
for all $q \geqs 3$. Now assume $q = 2$, so $G_0 = {\rm U}_{12}(2)$ and $H_0 = {\rm Suz}$ is the only possibility (more precisely, we have $G = G_0.c$ and $H = H_0.c$ with $c \in \{1,2\}$). This is the case labelled $(\mathcal{B}9)$ in \cite[Table 4]{BGS} and we have $b(G,H) = 2$ by \cite[Lemma 4.4]{BGS}. However, the proof of the latter lemma omits the details for this case, so some additional work is required in order to verify the bound $\what{Q}(G,H)<1$.

If $x \in G$ is an involutory graph automorphism, then $|x^G|>2^{63}=b_1$ and we note that $i_2(H \setminus H_0) < 2^{22}=a_1$. Now assume $x \in H_0$ has prime order $r$. Using the {\sc Magma} function \texttt{ClassicalMaximals}, we can construct the covering group $\widehat{H}_0 = 3.{\rm Suz}$ as a subgroup of ${\rm SU}_{12}(2)$ and this allows us to calculate the action of $x$ on $V$. If $r = 2$ then $|x^G| > 2^{63} = b_2$ (minimal if $x$ is in the \texttt{2A} class of $H_0$, which has Jordan form $(J_2^4,J_1^4)$ on $V$) and we note that $i_2(H_0) = 2915055 < 2^{22}=a_2$. Similarly, if $r=3$ then $\nu(x) \geqs 6$, so $|x^G|> 2^{71}=b_3$ and we note that $i_3(H) < 2^{28}=a_3$. And if $r \geqs 5$ then $\nu(x) \geqs 8$, so
\[
|x^G| \geqs \frac{|{\rm GU}_{12}(2)|}{|{\rm GU}_{4}(2)|^3} > 2^{95} = b_4
\]
and we observe that $|H_0|<2^{39} = a_4$. Bringing these estimates together, we conclude that
\begin{equation}\label{e:bdnew}
\what{Q}(G,H) < \sum_{i=1}^4 a_i^2b_{i}^{-1} < 1
\end{equation}
as required. 

Next assume $G_0 = {\rm PSp}_{12}(q)$. If $q = 2$, then $G = {\rm Sp}_{12}(2)$ and $H = S_{14}$ or ${\rm L}_2(25).2$. In both cases, the main theorem of \cite{BGS} gives $b(G,H) = 2$ and we can use {\sc Magma} (as in Example \ref{ex:con}(i)) to compute $\widehat{Q}(G,H)$. For $H = {\rm L}_2(25).2$ we get $\widehat{Q}(G,H)< 1$, whereas 
\[
\what{Q}(G,H) = \frac{12927675791}{1554186240}
\]
for $H = S_{14}$, so the latter case is recorded in Table \ref{tab:new}. 

Next assume $q = 3$. Here $H_0 = {\rm Suz}$ is the only possibility, which is case $(\mathcal{E}3)$ in \cite[Table 12]{BGS}. By \cite[Proposition 6.8]{BGS} we see that $b(G,H) = 2$, but once again the details are omitted in the proof and so some additional work is required.

If $x \in G \setminus G_0$ is an involution, then 
\[
|x^G| \geqs \frac{|{\rm Sp}_{12}(3)|}{2|{\rm Sp}_6(3^2)|} > 3^{35} = b_1 
\]
and we note that $i_2(H \setminus H_0) < 3^{14}=a_1$. Now assume $x \in H_0$ has prime order $r$. As above, we can use {\sc Magma} to construct $\widehat{H}_0 = 2.{\rm Suz}$ as a subgroup of ${\rm Sp}_{12}(3)$ and this allows us to determine the action of $x$ on $V$. If $r=2$ then $|x^G| > 3^{32} = b_2$ (minimal if $x$ is in the \texttt{2A} class, which acts on $V$ as $(-I_4,I_8)$) and we note that $i_2(H_0) = 2915055 < 3^{14}=a_2$. Similarly, if $r=3$ then $\nu(x) \geqs 6$, so $|x^G|> 3^{41}=b_3$ and we have $i_3(H) < 3^{18}=a_3$. And if $r \geqs 5$ then $\nu(x) \geqs 8$, so
\[
|x^G| \geqs \frac{|{\rm Sp}_{12}(3)|}{|{\rm Sp}_{4}(2)||{\rm GU}_4(3)|} > 3^{51} = b_4
\]
and we note that $|H_0|<3^{25} = a_4$. It is now routine to check that the upper bound in \eqref{e:bdnew} is satisfied.

Finally, if $q \geqs 4$ then by inspecting \cite[Table 8.81]{BHR}, one can check that $|H|<q^{13}$ and the result follows since $\nu(H) \geqs 3$ and thus $|x^G|>\frac{1}{2}Qq^{27}$ for all $x \in H$ of prime order by Proposition \ref{p:xgbd}.

To complete the proof of the lemma, let $G_0 = {\rm P\O}_{12}^{\e}(q)$. First assume $q = 2$, so $\e = -$ and $H_0 = A_{13}$ or ${\rm L}_3(3)$. If $H_0 = A_{13}$ then $b(G,H) = 3$, as recorded in \cite[Table 1]{BGS}, and for $H_0 = {\rm L}_3(3)$ we can use {\sc Magma} (as in Example \ref{ex:con}(i)) to check that $\what{Q}(G,H)<1$. Now assume $q \geqs 3$. If $H \in \mathcal{A}$, then $H_0 = A_{14}$ or $A_{13}$, $q$ is odd and the proof of \cite[Lemma 3.5]{BGS} gives $\what{Q}(G,H)<1$. Finally, if $H \not\in \mathcal{A}$ then $q \geqs 5$ is odd and Theorem \ref{gursax} gives $\nu(H) \geqs 3$, so $|x^G|>\frac{1}{4}q^{27}$ for all $x \in H$ of prime order (see Remark \ref{r:pso}). In addition, we have $|H|<q^8$ and the result quickly follows.
\end{proof}

\begin{lem}\label{l:t2_11}
The conclusion to Theorem \ref{t:hat} holds if $n =11$.
\end{lem}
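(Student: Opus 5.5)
The plan follows the pattern of Lemmas \ref{l:t2_12}--\ref{l:t2_14}. For $n=11$ the possible socles are $G_0 = {\rm L}_{11}^{\e}(q)$ and $G_0 = \O_{11}(q)$ with $q$ odd. The subgroups in $\mathcal{S}$ can be read off from the relevant tables in \cite[Chapter 8]{BHR}. For each candidate $H$ I will combine Theorem \ref{gursax}, which gives $\nu(H)\geqs 3$ unless $H \in \mathcal{A}$ (note that $\mathcal{C}$ contains no case with $n=11$), with Proposition \ref{p:xgbd} to obtain a uniform lower bound $B$ on $|x^G|$. I will then apply Lemma \ref{l:favbound} with $A=|H|$, so that $\what{Q}(G,H) < |H|^2/B$. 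The only cases I expect to need more than this are small $q$ and the $\mathcal{A}$-type subgroups.

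First, the linear and unitary groups. Proposition \ref{p:xgbd} gives $|x^G|>\frac{1}{2}Qq^{48}$ for all prime order $x\in H$. By \cite[Tables 8.70--8.73]{BHR}, every $H$ is small: the socles are things like ${\rm L}_2(23)$, ${\rm M}_{24}$, ${\rm M}_{23}$, ${\rm U}_5(2)$ or ${\rm L}_2(q_0)$-type groups, so $|H|\leqs 2|{\rm M}_{24}|<2^{12}\cdot 10^{8}$, say. Hence
\[
\what{Q}(G,H)<2|H|^2Q^{-1}q^{-48}<1
\]
for all $q\geqs 2$, because $|{\rm M}_{24}|^2\approx 6\cdot 10^{16}$ is far below $2^{48}$. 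I would double-check ${\rm M}_{24}<{\rm L}_{11}(2)$ directly, and it is comfortably fine.

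Next, the orthogonal groups $G_0=\O_{11}(q)$. If $H\in\mathcal{A}$, then $H_0=A_{13}$ or $A_{12}$ with $q=p$, and I will appeal to the proof of \cite[Lemma 3.5]{BGS} as in the earlier lemmas. For $p$ small, say $p\in\{3,5\}$ or $(p=13,\,H_0=A_{13})$, where that proof might fall back on $b(G,H)$ rather than $\what{Q}$, I would verify $\what{Q}(G,H)<1$ directly in {\sc Magma} via \texttt{QHat}, following Example \ref{ex:con}. Otherwise $\nu(H)\geqs 3$ and $|x^G|>\frac{1}{4}Qq^{21}$. The remaining members of $\mathcal{S}$ in \cite[Table 8.75]{BHR}, such as ${\rm L}_2(11)$, ${\rm L}_2(23)$, ${\rm L}_3(3)$, ${\rm U}_3(3)$, ${\rm M}_{12}$ and ${\rm Sp}_4(q)$-type groups arising from symmetric or exterior-square modules, will be handled by size. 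If $|H|<\log_3 q\cdot q^{10}$, then $\what{Q}<4Q^{-1}(\log_3q)^2q^{-1}<1$ once $q$ is moderately large, and the finitely many small $q$, essentially $q=3$, are checked in {\sc Magma}.

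The main obstacle is any algebraic family whose order is comparable to $q^{10}$. The concern is the socle ${\rm PSp}_4(q)$ (dimension $10$) acting on an $11$-dimensional module, if such a case occurs for $p=5$; this is analogous to the $({\rm PSp}_6(q),\Lambda^2)$ treatment via Lemma \ref{l:sp6case}. If it arises, I will prove an analogue of Lemma \ref{l:sp6case}: compute the Jordan forms of long-root elements and involutions on $V$, show that only transvection-type unipotents have small classes, about $q^{2\cdot 9}$, and that there are at most $q^4$ of them in $H$. I will then split $\what{Q}$ into the contribution from these elements, of order $q^{8}\cdot q^{-18}$, and from the rest, bounded by $|H|^2$ divided by a larger class bound, which gives $\what{Q}<1$. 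Finally, I expect all pairs in dimension $11$ to have $\what{Q}<1$ except those already listed in \cite[Table 1]{BGS}, so none will need to be added to Table \ref{tab:new}.
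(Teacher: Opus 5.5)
\textbf{The gap is in the linear case, at $({\rm L}_{11}(2),{\rm M}_{24})$.} Your overall route is the paper's: Theorem \ref{gursax} and Proposition \ref{p:xgbd} feed Lemma \ref{l:favbound}, and the $\mathcal{A}$-type subgroups of $\O_{11}(q)$ are deferred to the proof of \cite[Lemma 3.5]{BGS}. But the linear case rests on an arithmetic error, exactly where the paper has to do extra work.

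You claim $2|H|^2Q^{-1}q^{-48}<1$ for all $q \geqs 2$ because $|{\rm M}_{24}|^2 \approx 6\cdot 10^{16}$ is ``far below'' $2^{48}$. In fact $2^{48} \approx 2.8\cdot 10^{14}$. So for $(G,H) = ({\rm L}_{11}(2),{\rm M}_{24})$ your bound equals $3|{\rm M}_{24}|^2/2^{48} \approx 640$. The uniform lower bound $|x^G|>\frac{1}{2}Qq^{48}$ coming from $\nu(H)\geqs 3$ is therefore far too weak here, and your remark that this case is ``comfortably fine'' is unjustified.

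The paper handles this case by computing $\what{Q}(G,H)$ directly in {\sc Magma} with \texttt{QHat}, following Example \ref{ex:con}(i), and checking that it is less than $1$. A hand proof would instead need class-by-class information: the action on $V$ of each class of prime order elements of ${\rm M}_{24}$, the resulting sharper lower bounds on $|x^G|$, and a combination of these as in \eqref{e:bdnew}. In every other linear or unitary case one has $q \geqs 3$ and $|H| \leqs 2|{\rm U}_5(2)|$, and there your crude estimate does suffice.

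\textbf{The orthogonal case is fine in outline but rests on a guessed list of subgroups.} As the paper observes, once the $\mathcal{A}$-type subgroups with socle $A_{12}$ or $A_{13}$ are removed, every remaining $H \in \mathcal{S}$ satisfies $q \geqs 13$ and $|H|<q^4$. In particular no ${\rm PSp}_4(q)$-type family occurs, so your proposed analogue of Lemma \ref{l:sp6case} is unnecessary. The bound $|x^G|>\frac{1}{4}Qq^{21}$ then immediately gives $\what{Q}(G,H)<4Q^{-1}q^{-13}<1$, with no computer checks.

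Your own threshold would not have sufficed as written. The quantity $4Q^{-1}(\log_3 q)^2q^{-1}$ exceeds $1$ for every $q \leqs 53$. So ``essentially $q=3$'' badly understates what would have been left to {\sc Magma}, had a family of order close to $q^{10}$ actually existed.
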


\begin{proof}
First assume $G_0 = {\rm L}_{11}^{\e}(q)$. If $G = {\rm L}_{11}(2)$ and $H = {\rm M}_{24}$, then we can use {\sc Magma} (following Example \ref{ex:con}(i)) to verify the bound $\what{Q}(G,H)<1$. In the remaining cases, we have $|x^G|>\frac{1}{2}Qq^{48}$ for all $x \in H$ of prime order (by combining the bound $\nu(H) \geqs 3$ from Theorem \ref{gursax} with the lower bound on $|x^G|$ in Proposition \ref{p:xgbd}) and we note that $|H| \leqs 2|{\rm U}_5(2)|$. We immediately deduce that $\what{Q}(G,H)<1$, as required.

Now assume $G_0 = \O_{11}(q)$. If $H \in \mathcal{A}$ then $H_0 = A_{13}$ or $A_{12}$ and the proof of \cite[Lemma 3.5]{BGS} gives $\what{Q}(G,H)<1$. In the remaining cases, we observe that $q \geqs 13$, $|H|<q^4$ and $\nu(H) \geqs 3$, so $|x^G|>\frac{1}{4}Qq^{21}$ by Proposition \ref{p:xgbd} and the result follows.
\end{proof}

\begin{lem}\label{l:t2_10}
The conclusion to Theorem \ref{t:hat} holds if $n =10$.
\end{lem}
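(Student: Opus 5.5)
We follow the pattern of Lemmas \ref{l:t2_14}--\ref{l:t2_11} and treat each type of $G_0$ in dimension $10$ in turn: ${\rm L}_{10}^{\e}(q)$, ${\rm PSp}_{10}(q)$ and ${\rm P\O}_{10}^{\e}(q)$. The subgroups $H \in \mathcal{S}$ are read off from the relevant tables in \cite[Chapter 8]{BHR}.

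For each type, the first step disposes of the generic cases. If $H \notin \mathcal{A}\cup\mathcal{C}$, then Theorem \ref{gursax} gives $\nu(H) \geqs 3$. Proposition \ref{p:xgbd} then gives, for all $x \in H$ of prime order,
\[
|x^G|>\tfrac{1}{2}Qq^{42} \mbox{ (linear/unitary)}, \quad |x^G|>\tfrac{1}{4}Qq^{21} \mbox{ (symplectic)}, \quad |x^G|>\tfrac{1}{2d}q^{21} \mbox{ (orthogonal, via Remark \ref{r:pso})}.
\]
Since the subgroups in $\mathcal{S}$ in dimension $10$ have bounded order, or order at most a small power of $q$ (for example $H_0 = {\rm L}_2(q)$ or ${\rm L}_3^{\e}(q)$ in small-degree symmetric-power embeddings), Lemma \ref{l:favbound} with $B$ as above and $A=|H|$ gives $\what{Q}(G,H) < |H|^2B^{-1}$. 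This is less than $1$ except for finitely many small $q$. Those small cases, together with the remaining small groups with $q \in \{2,3,4,5\}$, are handled with {\sc Magma} by computing $\what{Q}(G,H)$ directly via \texttt{QHat} (Example \ref{ex:qhat}), using constructions as in Example \ref{ex:con}.

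Next we treat $H \in \mathcal{A}$. These are $H_0 = A_{12}$ or $A_{11}$ in ${\rm P\O}^{\e}_{10}(p)$, and $A_{12}$ in $\O_{10}^-(2)$. For odd $p$ we cite the proof of \cite[Lemma 3.5]{BGS}, which yields $\what{Q}(G,H)<1$. The case $G_0=\O_{10}^{-}(2)$ with $H_0 = A_{12}$ has $b(G,H)\geqs 3$ and is recorded in \cite[Table 1]{BGS}. We also compute the non-maximal-looking cases such as $H = {\rm M}_{12}.2$ (Example \ref{ex:con}(ii)) directly in {\sc Magma}.

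For $H \in \mathcal{C}$, the only case is $G_0 = {\rm PSp}_{10}(p)$ with $H_0 = {\rm U}_5(2)$. For $p=3$ we compute $\what{Q}$ directly, as in Example \ref{ex:con}(iii). For $p \geqs 5$ we use the character table of ${\rm U}_5(2).2$ to get $\nu(x)$ for each class. This gives class-wise lower bounds on $|x^G|$, typically $\nu(x)\geqs 2$ for involutions with $|x^G|>\frac14 q^{16}$ and larger bounds elsewhere. Summing class by class with $|H|<2^{14}$ then gives $\what{Q}(G,H)<1$.

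The main obstacle will be the genuinely small cases, for instance ${\rm U}_{10}(2)$, ${\rm Sp}_{10}(2)$, ${\rm L}_{10}(3)$ and $\O_{10}^\pm(q)$ with $q\leqs 5$. Here the crude $|H|^2/B$ bound fails, and there are subgroups with $b(G,H)=2$ whose $\what{Q}$ must be checked to lie below $1$, since Table \ref{tab:new} lists no $n=10$ case. In these cases we would first try splitting by element order as in \eqref{e:bdnew}, with separate pairs $(a_i,b_i)$ for involutions, elements of order $3$, and elements of order at least $5$. Only if that is insufficient would we fall back on exact {\sc Magma} computation.
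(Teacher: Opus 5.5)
There is a genuine gap in your generic step. The estimate $\what{Q}(G,H)<|H|^2B^{-1}$ does not fail only for finitely many $q$. For every $q$, $\mathcal{S}$ contains (the normaliser of) $H_0={\rm L}_5^{\e}(q)<{\rm L}_{10}^{\e}(q)$, acting on $\Lambda^2$ of its natural module. A transvection of $H_0$ acts on $V$ as $(J_2^3,J_1^4)$, so $\nu(H)=3$ and $B=\frac{1}{2}Qq^{42}$ is sharp up to a constant. But $|H|$ is of order $q^{24}$, so $|H|^2B^{-1}$ grows like $q^{6}$, and no finite {\sc Magma} check can close the argument. The paper handles this in three steps. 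The crude bound already works when $|H|<q^{20}$, giving $\what{Q}(G,H)<2Q^{-1}q^{-2}$. By \cite[Tables 8.61, 8.63]{BHR}, ${\rm L}_5^{\e}(q)$ is the only possibility with $|H|\geqs q^{20}$. For that case the paper appeals to the finer analysis in the proof of \cite[Lemma 4.2]{BGS} (case $(\mathcal{B}1)$). You need an argument of this kind, for instance treating separately the elements of $H$ with small $\nu(x)$, such as the roughly $q^8$ transvections of $H_0$.

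The orthogonal case has the same defect, compounded by a misapplied reference. Remark~\ref{r:pso} assumes $n\geqs 12$, and its proof breaks at $n=10$: the $\nu(x)\geqs 4$ estimate there only gives $\frac{1}{4}Qq^{20}$. Indeed, for $q$ odd a unipotent element with Jordan form $(J_2^4,J_1^2)$ has $\nu(x)=4$ but $|x^G|$ of order $q^{20}$, so your claimed bound $\frac{1}{2d}q^{21}$ fails. The valid generic bound is $|x^G|>\frac{1}{4}Qq^{18}$ from Proposition~\ref{p:xgbd}, which the paper combines with $|H|<q^8$. With that bound, though, $H_0={\rm PSp}_4(q)$ ($q$ odd, acting on the symmetric square of its natural module) has $|H|$ of order $q^{10}$. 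So $|H|^2B^{-1}$ is of order $q^2$, and a second infinite family escapes. The paper cites the proof of \cite[Lemma 6.6]{BGS} (case $(\mathcal{D}5)$); you would need a direct analysis of the prime order elements of this $H$.

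There are also three smaller errors:
\begin{itemize}
\item $|{\rm U}_5(2)|=13685760>2^{23}$, so your bound $|H|<2^{14}$ is false. Your class-by-class sum for $q\in\{5,7\}$ then only works after showing, as the paper does, that the class of $165$ involutions acting as $(-I_8,I_2)$ contains the only prime order elements with $\nu(x)<4$.
\item For $p\in\{5,11\}$ the elements of order $p$ in $H$ are unipotent. Their Jordan forms must be computed; they cannot be read from the ordinary character table.
\item For $p=3$, the proof of \cite[Lemma 3.5]{BGS} only establishes $b(G,H)=2$ by random search. So $H_0=A_{12}<\O_{10}^{+}(3)$ needs a direct computation of $\what{Q}(G,H)$.
\end{itemize}
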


\begin{proof}
We begin by assuming $G_0 = {\rm L}_{10}^{\e}(q)$. We have $\nu(H) \geqs 3$ and thus $|x^G|>\frac{1}{2}Qq^{42}$ for all $x \in H$ of prime order. In particular, if $|H|<q^{20}$ then $\what{Q}(G,H)<2Q^{-1}q^{-2}<1$, so we may assume $|H| \geqs q^{20}$. By inspecting \cite[Tables 8.61, 8.63]{BHR}, we see that $H_0 = {\rm L}_5^{\e}(q)$ is the only possibility. This is included in the case labelled $(\mathcal{B}1)$ in \cite[Table 4]{BGS}  and the proof of \cite[Lemma 4.2]{BGS} shows that $\what{Q}(G,H)<1$.

Next suppose $G_0 = {\rm PSp}_{10}(q)$. To begin with, let us assume $H_0 \ne {\rm U}_5(2)$. Then $\nu(H) \geqs 3$ by Theorem \ref{gursax} and thus $|x^G|>\frac{1}{4}Qq^{21}$ for all $x \in H$ of prime order (see Proposition \ref{p:xgbd}). The result now follows since $|H|<q^7$. Now assume $H_0 = {\rm U}_5(2)$, so $q$ is an odd prime. This is the case labelled $(\mathcal{C}14)$ in \cite[Table 5]{BGS} and so it is an exception to the main statement of Theorem \ref{gursax}. By consulting \cite[Lemma 5.4]{BGS}, we see that $b(G,H) =2$, but the proof of this lemma does not give any details in this specific case, so some additional work is needed in order to verify the bound $\what{Q}(G,H)<1$. Let $\chi$ be the Brauer character corresponding to the associated representation $\rho:H_0 \to {\rm GL}(V)$.

First we claim that $\nu(H) = 2$. To see this, first note that if $x \in H_0$ is an involution with Jordan form $(J_2,J_1^3)$ on the natural module for $H_0$, then $\nu(x) = -6$. This means that $x$ acts as $(-I_8,I_2)$ on $V$ and thus $\nu(x) = 2$. So in order to conclude that $\nu(H) = 2$, it just suffices to show that no element $x \in H \cap {\rm PGL}(V)$ of order $p$ has  Jordan form $(J_2,J_1^8)$ on $V$, so we may assume $q = p \in \{3,5,11\}$. With the aid of {\sc Magma}, we calculate that $x$ has Jordan form $(J_{10})$ when $q = 11$, and $(J_5^2)$ when $q = 5$. Similarly, if $q=3$ then $H_0$ has $6$ conjugacy classes of elements of order $3$ and we see that $\nu(x) \geqs 4$ for all $x \in H$ of order $3$. This justifies the claim. 

If $q \geqs 11$ then $|H|<q^{15/2}$ and by arguing as in the proof of Proposition \ref{p:xgbd} we observe that $|x^G|>\frac{1}{4}Qq^{16}$ for all $x \in H$ of prime order, using the fact that $\nu(H) = 2$. This implies that $\what{Q}(G,H) < 4Qq^{-1}<1$ and the result follows.

Next assume $q \in \{5,7\}$. Here $|H|<q^{11}=a_1$ and we have $|x^G|>\frac{1}{2}q^{24} = b_1$ for all elements $x \in G$ of prime order with $\nu(x) \geqs 4$. If $x \in H$ has prime order and $\nu(x) < 4$, then $x$ is an involution with Jordan form $(J_2,J_1^3)$ on the natural module for $H_0$, which acts on $V$ as $(-I_8,I_2)$. Here $|x^G \cap H| = 165 = a_2$ and $|x^G|>\frac{1}{2}q^{16} = b_2$, which means that $\what{Q}(G,H) < a_1^2b_1^{-1} + a_2^2b_2^{-1} < 1$. 

Finally, let us assume $q = 3$. Here it is convenient to use {\sc Magma} to check that $\what{Q}(G,H)<1$. For example, if $G = {\rm PGSp}_{10}(3) = G_0.2$ then $H = {\rm U}_5(2).2$ and we can construct $G$ and $H$ as explained in Example \ref{ex:con}(iii).

To complete the proof of the lemma, let us assume $G_0 = {\rm P\O}_{10}^{\e}(q)$. To begin with, suppose $q = 2$, so $\e=-$ and $H_0 = A_{12}$ or ${\rm M}_{12}$. If $H_0 = A_{12}$ then \cite[Theorem 1]{BGS} gives $b(G,H) = 3+\a$, where $\a = 1$ if $G = {\rm O}_{10}^{-}(2)$, otherwise $\a=0$. Now assume $H_0 = {\rm M}_{12}$, in which case $G = {\rm O}_{10}^{-}(2)$, $H = {\rm M}_{12}.2$ and $H_0 < A_{12} < G_0$ is a second maximal subgroup of $G_0$. This is the case labelled $(\mathcal{E}12)$ in \cite[Table 12]{BGS}, but no details are given in the proof of \cite[Proposition 6.8]{BGS}. By constructing $G$ and $H$ as matrix groups in the natural representation (see  Example \ref{ex:con}(ii) for the details), we can use the {\sc Magma} function \texttt{QHat} (see Example \ref{ex:qhat}) to show that  
\[
\what{Q}(G,H) = \frac{49233}{152320}.
\]

Next assume $q = 3$. If $\e = +$ then $H_0 = A_{12}$ and the proof of \cite[Lemma 3.5]{BGS} shows that $b(G,H) = 2$ via random search. In fact, we can use {\sc Magma} to show that $\widehat{Q}(G,H)<1$. More precisely, for $G_0 = \O_{10}^{+}(3)$ we use \texttt{QHat} (see Example \ref{ex:qhat}) to compute
\[
\what{Q}(G,H) = \frac{14552}{85293},
\]
whereas for $G = G_0.2$ (extended by an involutory graph automorphism) we get 
\[
\what{Q}(G,H) = \frac{2293870}{3497013}.
\]
In the latter case, we can identify $G = {\rm PO}_{10}^{+}(3)$ with an index-two subgroup of ${\rm O}_{10}^{+}(3) = 2 \times {\rm PO}_{10}^{+}(3)$ and this allows us to construct $G$ and $H$ as follows:
{\small
\begin{verbatim} 
G:=NormalSubgroups(GOPlus(10,3):IndexEqual:=2)[1]`subgroup;
H:=Normaliser(G,ClassicalMaximals("O+",10,3:classes:={9})[1]);
\end{verbatim}}

Now, if $\e = -$ then $H_0 = {\rm PSp}_4(q)$ and this is case $(\mathcal{D}5)$ in \cite[Table 11]{BGS}. Here we refer the reader to the proof of \cite[Lemma 6.6]{BGS}. For the remainder, we may assume $q \geqs 5$ is odd. If $H_0 = A_{11}$ then $H \in \mathcal{A}$ and the result follows from the proof of \cite[Lemma 3.5]{BGS}. Similarly, if $H_0 = {\rm PSp}_4(q)$ then we can appeal to the proof of \cite[Lemma 6.6]{BGS}. In each of the remaining cases, it is easy to check that $|H|<q^8$ and we have $|x^G|>\frac{1}{4}Qq^{18}$ since $\nu(H) \geqs 3$ by Theorem \ref{gursax}. Therefore, $\what{Q}(G,H) < 4Q^{-1}q^{-2} < 1$ as required.
\end{proof}

\begin{lem}\label{l:t2_9}
The conclusion to Theorem \ref{t:hat} holds if $n =9$.
\end{lem}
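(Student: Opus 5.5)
We argue as in the proofs of Lemmas \ref{l:t2_10}--\ref{l:t2_14}, working through the socle types with $n=9$: $G_0 = {\rm L}_9^{\e}(q)$ or $G_0 = \O_9(q)$ with $q$ odd. The maximal subgroups in $\mathcal{S}$ are read off from \cite[Tables 8.54--8.59]{BHR}. For each case we show that $\what{Q}(G,H)<1$, except where $(G,H)$ already appears in \cite[Table 1]{BGS} with $b(G,H) \geqs 3$. No new entries for Table \ref{tab:new} should arise when $n=9$.

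\emph{Linear and unitary groups.} Since $n = 9 \geqs 6$ and $\mathcal{C}$ contains no cases with $n=9$ (and $\mathcal{A}$ is empty here), Theorem \ref{gursax} gives $\nu(H)>\max\{2,3/2\}$, so $\nu(H) \geqs 3$. Proposition \ref{p:xgbd} (valid for $n \geqs 9$) then gives $|x^G|>\frac{1}{2}Qq^{36}$ for every $x \in H$ of prime order. By Lemma \ref{l:favbound} with $A=|H|$, it suffices to check $|H|^2 < \frac{1}{2}Qq^{36}$, that is, roughly $|H|<q^{18}/2$.

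The candidates are:
\begin{itemize}
\item $H_0 = {\rm L}_3^{\e}(q)$ acting on the adjoint-type module $W\otimes W^*$, or on $W \otimes W^{(q_0)}$ with $q=q_0^2$ (order about $q^8$);
\item small cases such as ${\rm L}_2(19)$, $3.A_6$, $3.A_7$, ${\rm U}_4(2)$ and ${\rm L}_3(4).D_{12}$ over small fields.
\end{itemize}
The generic families satisfy the bound easily. For the small cases, either the crude bound still works (for example $|H|<q^{18}/2$ once $q \geqs 3$), or $q$ is tiny, as with $G={\rm L}_9(2).2$ and $H={\rm L}_3(4).D_{12}$ or ${\rm U}_9(2)$ cases. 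These we settle by computing $\what{Q}(G,H)$ directly in {\sc Magma} with \texttt{QHat}, constructing the groups as in Example \ref{ex:con}(iv). Any pair listed in \cite[Table 1]{BGS} with $b(G,H) \geqs 3$ automatically has $\what{Q}(G,H) \geqs 1$, because $Q \leqs \what{Q}$ and $b \geqs 3$ forces $Q(G,H)=1$. So those pairs just need to be identified, not bounded.

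\emph{Orthogonal groups $\O_9(q)$, $q$ odd.} Here $\mathcal{A}$ contributes $H_0 = A_{11}$ or $A_{10}$ (depending on $p$). For these we appeal to the proof of \cite[Lemma 3.5]{BGS}, which gives $\what{Q}(G,H)<1$, or else they appear in \cite[Table 1]{BGS}. For all other $H$, Theorem \ref{gursax} gives $\nu(H)\geqs 3$, and Proposition \ref{p:xgbd} yields $|x^G|>\frac{1}{4}Qq^{15}$. Hence $\what{Q}(G,H)<4Q^{-1}|H|^2q^{-15}$, and it suffices that $|H|<q^{7}/3$ or so.

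The remaining candidates are:
\begin{itemize}
\item $H_0={\rm L}_2(q)$ acting on the $9$-dimensional module (order $\approx q^3$), which is fine;
\item $H_0 = {\rm L}_2(q)^2$-type tensor products, which are not in $\mathcal{S}$;
\item $H_0={\rm L}_2(8)$ and similar small groups, fine for $q$ large.
\end{itemize}
A few small-$q$ cases (say $q \in \{3,5,7\}$ with $|H| \geqs q^7/3$) we again handle with \texttt{QHat}.

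\emph{Main obstacle.} The main difficulty is the linear/unitary case with $H_0={\rm L}_3^{\e}(q)$ (case $(\mathcal{B}1)$-type in \cite[Table 4]{BGS}) when $q$ is small. If the crude bound is tight there, I would first invoke the proof of \cite[Lemma 4.2]{BGS}, which controls fixed point ratios for this family and should already give $\what{Q}<1$. Failing that, I would split the prime order elements by $\nu(x)$: the elements with $\nu(x)=4$ (images of transvection/reflection-type elements of ${\rm L}_3^\e(q)$, numbering about $q^4$) are treated separately from the rest. The remaining sporadic small cases are purely computational.
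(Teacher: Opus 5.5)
Your overall strategy is the right one: the crude estimate from Theorem \ref{gursax}, Proposition \ref{p:xgbd} and Lemma \ref{l:favbound} for the generic cases, \cite{BGS} for $\mathcal{A}$, and {\sc Magma} for small $q$. However, two steps do not go through as written, and both sit exactly where a new entry in Table \ref{tab:new} could have appeared.

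\textbf{The case $G_0=\O_9(3)$, $H_0=A_{10}$.} You appeal to the proof of \cite[Lemma 3.5]{BGS} for $\what{Q}(G,H)<1$, with the fallback that otherwise the pair lies in \cite[Table 1]{BGS}. Neither branch applies. Here $b(G,H)=2$, so the pair is not in \cite[Table 1]{BGS}. Moreover, \cite[Lemma 3.5]{BGS} proves $b(G,H)=2$ in this case by random search and never bounds $\what{Q}(G,H)$. Your \texttt{QHat} fallback is attached only to the non-$\mathcal{A}$ subgroups, and for $q=3$ there are none, so this case is left uncovered. It is also genuinely delicate: for $G={\rm SO}_9(3)$ and $H=S_{10}$ one gets $\what{Q}(G,H)=19602287/20982078\approx 0.93$. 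So only a direct computation, for both $G_0$ and $G_0.2$ as in Example \ref{ex:qhat}, settles it.

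\textbf{The unitary group ${\rm U}_9(2)$.} Here $\mathcal{S}$ consists of subgroups with socle ${\rm J}_3$, ${\rm L}_3(4)$ or ${\rm L}_2(19)$. Your candidate list omits ${\rm J}_3$, which is the hardest case. We have $|H|^2\geqs|{\rm J}_3|^2>2^{51}$, while Proposition \ref{p:xgbd} only gives $|x^G|>2^{36}/3$, so the crude bound fails by a factor of about $10^5$. It also fails for $H={\rm L}_3(4).D_{12}$ in $G_0.2$.

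Your fallback is to build $G$ and $H$ via \texttt{MaximalSubgroups} as in Example \ref{ex:con}(iv) and run \texttt{QHat}. That is precisely what is unavailable, since \texttt{MaximalSubgroups} is not effective for groups with socle ${\rm U}_9(2)$. The missing step is a class-by-class application of Lemma \ref{l:favbound}, carried out as follows.
\begin{enumerate}
\item Construct $3.{\rm J}_3<{\rm SU}_9(2)$ with \texttt{ClassicalMaximals}, only to read off how each prime order element acts on $V$.
\item Involutions act as $(J_2^4,J_1)$, so $|x^G|>2^{39}$ against $i_2(H_0)=26163$.
\item Elements of order $3$ satisfy $|x^G|>2^{47}$ against $i_3(H_0)=253232$.
\item Elements of order $r\geqs 5$ satisfy $|x^G|>2^{63}$.
\item Treat the $20520$ involutory graph automorphisms in $H\setminus H_0$ separately, using $|x^G|=|G_0|/|{\rm Sp}_8(2)|>2^{42}$.
\end{enumerate}
The same splitting off of graph automorphisms handles ${\rm L}_3(4).D_{12}$.

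\textbf{A minor correction.} Your description of the ${\rm L}_3$-type family is off. $W\otimes W^*$ is reducible; the relevant embeddings are twisted tensor products such as ${\rm L}_3(4)<{\rm L}_9(2)$ and ${\rm L}_3(4)<{\rm U}_9(2)$. Relative to the field of $G_0$ these have order about $q^{16}$, not $q^8$. This is harmless, because for $q\geqs 3$ one checks from \cite{BHR} that $|H|<q^{35/2}$ and the crude bound suffices. But it means the ``main obstacle'' you single out is not where the difficulty lies; the real work is the two cases above.
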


\begin{proof}
To begin with, let us assume $G_0 = {\rm L}_{9}^{\e}(q)$ with $q \geqs 3$. Here $\nu(H) \geqs 3$ by Theorem \ref{gursax} and thus $|x^G|>\frac{1}{2}Qq^{36}$ for all $x \in H$ of prime order. The result now follows since $|H|<q^{35/2}$ (see \cite[Tables 8.55, 8.57]{BHR}). 

Next assume $G_0 = {\rm L}_9(2)$. Here $H_0 = {\rm L}_3(4)$ and we can use {\sc Magma} to check that $\what{Q}(G,H)<1$. For example, if $G = G_0.2$ then we can construct $G$ and $H$ as in Example \ref{ex:con}(iv) and we can then use \texttt{QHat} to verify the desired bound.

Now assume $G_0 = {\rm U}_9(2)$, in which case $H_0 = {\rm J}_3$, ${\rm L}_3(4)$ or ${\rm L}_{2}(19)$. Here we need to adjust our approach because the {\sc Magma} function 
\texttt{MaximalSubgroups} is not effective for groups with socle ${\rm U}_9(2)$. 

Suppose $H_0 = {\rm J}_3$, which is the case labelled $(\mathcal{B}8)$ in \cite[Table 4]{BGS}. By \cite[Lemma 4.4]{BGS} we have $b(G,H) = 2$, but the details for this case were omitted in the proof. In order to show that $\what{Q}(G,H)<1$, let us first observe that $G = G_0.c$ and $H=H_0.c$ with $c \in \{1,2\}$. We can use the function \texttt{ClassicalMaximals} to construct the covering group $\widehat{H}_0 = 3.{\rm J}_3$ as a subgroup of ${\rm SU}_9(2)$, which allows us to determine how each element $x \in H_0$ of prime order acts on $V$. In this way, we see that if $x \in H_0$ is an involution, then $x$ acts on $V$ as $(J_2^4,J_1)$, so $|x^G|>2^{39}=b_1$ and we note that $i_2(H_0) = 26163 = a_1$. Now assume $x \in H_0$ has odd prime order $r$. If $r = 3$ then 
\[
|x^G| \geqs \frac{|{\rm GU}_9(2)|}{|{\rm GU}_1(2)||{\rm GU}_4(2)|^2} > 2^{47} = b_2
\]
and we compute $i_3(H_0) = 253232=a_2$. Similarly, if $r \geqs 5$ then $|x^G|>2^{63} = b_3$ and we note that $|H_0| < 2^{26} = a_3$. Finally, if $x \in G$ is an involutory graph automorphism, then $|x^G| = |G_0|/|{\rm Sp}_8(2)| > 2^{42} = b_4$ and we compute $i_2(H \setminus H_0) = 20520 = a_4$. It is now easy to check that the bound in \eqref{e:bdnew} is satisfied.

Next assume $G_0 = {\rm U}_9(2)$ and $H_0 = {\rm L}_3(4)$. 
Here $(G,H) = (G_0,H_0.6)$ or $(G_0.2,H_0.D_{12})$. Now $6|H_0| < 2^{17} = a_1$ and by considering the embedding of ${\rm SL}_3(4).6$ in ${\rm SU}_9(2)$ we find that $|x^G| > 2^{35} = b_1$ for all $x \in H \cap G_0$ of prime order (minimal if $x$ is an involutory graph-field automorphism of $H_0$, which acts on $V$ with Jordan form $(J_2^3,J_1^3)$). Finally, if $x \in G$ is an involutory graph automorphism then $|x^G|>2^{42} = b_2$ as above and we note that $i_2(H \setminus H_0.6) = 1368=a_2$. It follows that $\what{Q}(G,H) < a_1^2b_1^{-1} + a_2^2b_2^{-1}<1$.

Finally, if $G_0 = {\rm U}_9(2)$ and $H_0 ={\rm L}_2(19)$, then $|H|<2^{13}$ and $\nu(H) \geqs 3$, whence $|x^G|>2^{35}$ for all $x \in H$ of prime order and we conclude that $\what{Q}(G,H)< 2^{-9}$. 

To complete the proof of the lemma, let us assume $G_0 = \O_{9}(q)$. First suppose $q = 3$, so $H_0 = A_{10}$ is the only possibility. In the proof of \cite[Lemma 3.5]{BGS}, random search is used to show that $b(G,H) = 2$, without calculating an upper bound on $\what{Q}(G,H)$. To remedy this, we can proceed as in Example \ref{ex:con}(i). For instance, if $G = G_0.2$ then $H = S_{10}$ and we can construct $G$ and $H$ as follows:
{\small
\begin{verbatim}
G:=SO(9,3);
H:=ClassicalMaximals("O",9,3:classes:={9},special:=true)[1];
\end{verbatim}}
\noindent We can then use the function \texttt{QHat} (see Example \ref{ex:qhat}) to calculate that
\[
\what{Q}(G,H) = \frac{19602287}{20982078}.
\]
Similarly, it is easy to check that $\what{Q}(G,H)<1$ when $G = G_0$.

Now assume $q \geqs 5$. If $H \in \mathcal{A}$ then we can appeal to the proof of \cite[Lemma 3.5]{BGS}. In the remaining cases, we have $|H|<q^7$ and $\nu(H) \geqs 3$, so Proposition \ref{p:xgbd} gives $|x^G|>\frac{1}{4}Qq^{15}$ for all $x \in H$ of prime order and the result follows.
\end{proof}

\begin{lem}\label{l:t2_8}
The conclusion to Theorem \ref{t:hat} holds if $n =8$ and $G_0 \ne {\rm P\O}_8^{+}(q)$.
\end{lem}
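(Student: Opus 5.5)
We will follow the same strategy as in Lemmas \ref{l:t2_9}--\ref{l:t2_14}. For $n = 8$ with $G_0 \ne {\rm P\O}_8^{+}(q)$, the possibilities are $G_0 = {\rm L}_8^{\e}(q)$, ${\rm PSp}_8(q)$ or ${\rm P\O}_8^{-}(q)$. In each case the maximal subgroups $H \in \mathcal{S}$ can be read off from the relevant tables in \cite[Chapter 8]{BHR}. The generic argument has two steps.

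First, since $n \geqs 6$ and no case in Table \ref{ctab} has $n = 8$ apart from $G_0 = {\rm P\O}_8^{+}(q)$, Theorem \ref{gursax} gives $\nu(H) \geqs 3$ unless $H \in \mathcal{A}$. Note that $\mathcal{A}$ is empty for linear and unitary groups, and for $n = 8$ it contributes only ${\rm Sp}_8(2)$ with $H_0 = A_{10}$, together with orthogonal cases with $q = p$ odd. We will then apply Proposition \ref{p:xgbd} when $G_0 = {\rm PSp}_8(q)$ or ${\rm P\O}_8^{-}(q)$, which gives $|x^G| > \frac{1}{4}Qq^{15}$ or $\frac{1}{4}Qq^{12}$ respectively. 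For $G_0 = {\rm L}_8^{\e}(q)$, where Proposition \ref{p:xgbd} requires $n \geqs 9$, we will instead appeal directly to \cite[Corollaries 3.38, 3.49]{FPR2} with $\nu(x) \geqs 3$, which should give roughly $|x^G| > \frac{1}{2}Qq^{30}$. With these bounds, Lemma \ref{l:favbound} (taking $A_j = |H|$) yields $\what{Q}(G,H) < a|H|^2Q^{-1}q^{-3b(n-3-c)}$. We expect this to be less than $1$ whenever $|H|$ is small relative to $q$, say $|H| < q^{7}$ in the symplectic case, $|H| < q^{6}$ in the orthogonal case, and $|H| < q^{14}$ in the linear and unitary cases, up to the factor coming from field automorphisms.

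Second, we will handle the residual cases: those in $\mathcal{A}$, the few subgroups with large order compared with $q$ (for example $H_0 = {\rm L}_3^{\e}(q)$ acting on the adjoint module in ${\rm L}_8^{\e}(q)$, or $H_0 = {\rm L}_2(q^3)$ or ${\rm PSp}_2(q)^{\otimes 3}$-type embeddings), and the small fields $q \leqs 5$ where the crude bound fails. For $H \in \mathcal{A}$ we will appeal to the proof of \cite[Lemma 3.5]{BGS}. For ${\rm Sp}_8(2)$ with $H = S_{10}$, however, $b(G,H) = 3$, so this case is recorded in \cite[Table 1]{BGS}. For the large-order families we will follow the relevant lemma in \cite{BGS}, where the analogous case appears as a $\mathcal{B}$- or $\mathcal{D}$-type entry. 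Where the details there were omitted, we will refine the estimate by splitting the contributions according to the unipotent classes of smallest size in $H$, as in the proof of Lemma \ref{l:sp6case} and Lemma \ref{l:t2_14}. That is, we compute the Jordan forms of long root elements of $H_0$ on $V$, bound their number by about $q^{\dim}$, and treat the rest with a uniform stronger lower bound on $|x^G|$.

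The remaining small cases we will settle computationally in {\sc Magma}. We construct $G$ and $H$ via \texttt{ClassicalMaximals} or via permutation representations (Example \ref{ex:con}) and compute $\what{Q}(G,H)$ with \texttt{QHat}. Whenever we get $\what{Q}(G,H) \geqs 1$, we check that the case either appears in \cite[Table 1]{BGS} with $b(G,H) \geqs 3$, or belongs in Table \ref{tab:new}. Since Table \ref{tab:new} has no $n = 8$ entries, we expect every base-two case to give $\what{Q}(G,H) < 1$. We expect the main obstacle to be the linear and unitary groups over very small fields, such as ${\rm U}_8(2)$ or ${\rm L}_8(3)$ with larger subgroups in $\mathcal{S}$. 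There the \texttt{MaximalSubgroups} machinery may be ineffective, as happened for ${\rm U}_9(2)$. In that situation we will build the covering group $\widehat{H}_0$ inside ${\rm SL}_8^{\e}(q)$ via \texttt{ClassicalMaximals} and compute $\nu(x)$ class by class. We then assemble the bound $\sum_i a_i^2 b_i^{-1} < 1$ as in \eqref{e:bdnew}, handling outer involutions of $G$ separately using \cite[Corollary 3.49]{FPR2}.
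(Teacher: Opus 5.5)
\textbf{Verdict: genuine gap.} Your overall architecture is the paper's: Theorem \ref{gursax}, a crude bound via Lemma \ref{l:favbound}, and residual cases sent to \cite{BGS} or {\sc Magma}. It does dispose of ${\rm L}_8^{\e}(q)$ and ${\rm PSp}_8(q)$ exactly as in the paper. However, you have misplaced some of the subgroups in $\mathcal{S}$, and as a result the one substantive case has no working argument.

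The adjoint module of ${\rm L}_3^{\e}(q)$ carries a nondegenerate quadratic form. By Definition \ref{sdef}(iv) it therefore gives subgroups of ${\rm P\O}_8^{\pm}(q)$, not of ${\rm L}_8^{\e}(q)$. For ${\rm L}_8^{\e}(q)$ the only case is $H_0 = {\rm L}_3(4)$ with $q \geqs 5$ and $|H|<q^8$, and the crude bound handles it. For involutory graph automorphisms you only get $|x^G|>\frac{1}{16}q^{27}$ rather than order $q^{30}$, but this is harmless. The ${\rm SL}_2(q)^{\otimes 3}$-type subgroups lie in $\mathcal{C}_7$, not $\mathcal{S}$. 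The small-field linear and unitary groups you expect to be the main obstacle do not occur.

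The gap is $G_0 = {\rm P\O}_8^{-}(q)$. Here $\mathcal{S}$ consists of the subgroups with $H_0 = {\rm L}_3^{\e}(q) = {\rm PGL}_3^{\e}(q)$, where $q \equiv -\e \imod{3}$, acting on the adjoint module. Since $|H|$ is of order $q^8\log q$, the bound $|x^G|>\frac{1}{4}Qq^{12}$ from Proposition \ref{p:xgbd} is useless. Your proposed repair, which isolates the long root elements and applies one uniform stronger bound to the rest, cannot work:
\begin{itemize}
\item The long root elements are harmless: they act as $(J_3,J_2^2,J_1)$ with $|x^G|>\frac{1}{4}q^{16}$, and for $q$ odd there are fewer than $q^6$ elements of order $p$ in $H_0$.
\item The roughly $q^8\log q$ semisimple elements need $|x^G|$ of order $q^{18}$.
\item The involutions in $H_0$ have $|x^G|$ only about $\frac{1}{4}q^{16}$; for $q$ odd they act as $(-I_4,I_4)$.
\item The involutory graph automorphisms of $H_0$ have $\nu(x)=3$ and $|x^G|$ only about $\frac{1}{4}q^{15}$.
\end{itemize}
The last two types fall far below any uniform bound strong enough for the semisimple elements.

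What is missing is a type-by-type decomposition as in \eqref{e:bdnew}, using the adjoint-module description of each kind of element:
\begin{itemize}
\item $i_2(H_0)<2q^4$ involutions in $H_0$, against $|x^G|>\frac{1}{4}q^{16}$;
\item at most $|{\rm PGL}_3^{\e}(q)|/|{\rm Sp}_2(q)|<2q^5$ graph automorphisms, against $|x^G|>\frac{1}{4}q^{15}$; when $\e=+$ there are no involutory field or graph-field automorphisms, since $q\equiv 2 \imod{3}$ is not a square;
\item fewer than $q^6$ elements of order $p$, against $|x^G|>\frac{1}{4}q^{16}$;
\item fewer than $q^8\log_2 q$ semisimple elements and field automorphisms of odd prime order, against $|x^G|>\frac{1}{2}Qq^{18}$.
\end{itemize}
These estimates give $\widehat{Q}(G,H)<1$ for $q \geqs 4$, and the case $q=2$ has to be checked directly in {\sc Magma}.
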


\begin{proof}
First assume $G_0 = {\rm L}_{8}^{\e}(q)$, in which case $H_0 = {\rm L}_3(4)$ and $q \geqs 5$. If $x \in H \cap {\rm PGL}(V)$ has prime order, then Theorem \ref{gursax} gives $\nu(x) \geqs 3$ and thus $|x^G| > \frac{1}{2}Qq^{30}$ as in the proof of Proposition  \ref{p:xgbd}. And if $x \in G$ is a field, graph or graph-field automorphism of prime order, then $|x^G|>\frac{1}{16}q^{27}$ (minimal if $x$ is an involutory graph automorphism with $C_G(x)$ of type ${\rm Sp}_8(q)$). Since $|H| < q^8$, we deduce that $\what{Q}(G,H) < 16q^{-11}$ and the result follows.

Next assume $G_0 = {\rm PSp}_{8}(q)$. If $q = 2$, then $G = {\rm Sp}_8(2)$ and $H = S_{10}$ or ${\rm L}_2(17)$ (there is a unique conjugacy class of each type). If $H = S_{10}$ then $b(G,H) = 3$ by the main theorem of \cite{BGS}. And for $H = {\rm L}_2(17)$ it is easy to use {\sc Magma} to check that $\what{Q}(G,H)<1$. Now assume $q \geqs 3$. By inspecting \cite[Table 8.49]{BHR} we see that $q$ is odd. If $H_0 = {\rm L}_2(q^3)$ then we refer the reader to the proof of \cite[Lemma 6.2]{BGS}, where the desired bound $\what{Q}(G,H)<1$ is verified. In the remaining cases, we have $q \geqs 5$, $|H|<q^4$ and $\nu(H) \geqs 3$ by Theorem \ref{gursax}, so $|x^G|>\frac{1}{4}Qq^{15}$ for all $x \in H$ of prime order and the result follows.

Finally, let us assume $G_0 = {\rm P\O}_{8}^{-}(q)$. Here $H_0 = {\rm L}_3^{\e}(q) = {\rm PGL}_3^{\e}(q)$, where $q \equiv -\e \imod{3}$, and we can identify $V$ with the adjoint module for $H_0$. The case $q=2$ can be checked using {\sc Magma} (as in Example \ref{ex:con}(i)), so let us assume $q \geqs 4$. By Theorem \ref{gursax} we have $\nu(H) \geqs 3$, which immediately implies that $\nu(x) \geqs 4$ for all $x \in H_0$ of prime order. Let $W$ be the natural module for $H_0$. 

First assume $x \in H$ is an involution. If $x \in H_0$ then $x$ acts on $V$ as $(-I_4,I_4)$, so 
\[
|x^G| = \frac{|{\rm SO}_8^{-}(q)|}{2|{\rm SO}_4^{+}(q)||{\rm SO}_4^{-}(q)|} > \frac{1}{4}q^{16} = b_1
\]
and we note that $i_2(H_0) < 2q^4=a_1$. If $x$ is a graph automorphism of $H_0$, then $\nu(x) = 3$, $|x^G|>\frac{1}{4}q^{15} = b_2$ and we observe that there are no more than
\[
\frac{|{\rm PGL}_3^{\e}(q)|}{|{\rm Sp}_2(q)|} < 2q^5 = a_2
\]
such elements in $H$. Note that if $\e=+$ then $q \equiv 2 \imod{3}$ and $q$ is not a square, so $x$ is not an involutory field or graph-field automorphism of $H_0$. We conclude that the total contribution to $\what{Q}(G,H)$ from involutions is less than $a_1^2b_1^{-1} + a_2^2b_2^{-1}$.

Next suppose $q$ is odd and $x \in H_0$ has order $p$. Here we calculate that the elements in $H_0$ with Jordan form $(J_2,J_1)$ on $W$ act on $V$ as $(J_3,J_2^2,J_1)$, whereas the elements  with Jordan form $(J_3)$ act as $(J_5,J_3)$. There are fewer than $q^6 = a_3$ elements of order $p$ in $H_0$, and by considering the Jordan form of $x$ on $V$ we deduce that 
\[
|x^G| \geqs \frac{|{\rm SO}_8^{-}(q)|}{2q^9|{\rm Sp}_2(q)|} > \frac{1}{4}q^{16} = b_3 
\]
(minimal if $x$ has Jordan form $(J_3,J_2^2,J_1)$; see \cite[Lemma 3.18]{FPR2}).

Now assume $x \in H_0$ is a semisimple element of odd prime order. Since the $1$-eigenspace of $x$ on $V$ is $2$-dimensional (this follows from the identification of $V$ with the adjoint module), we deduce that 
\[
|x^G| \geqs \frac{|{\rm SO}_8^{-}(q)|}{|{\rm SO}_2^{-}(q)||{\rm GU}_3(q)|}>\frac{1}{2}Qq^{18} = b_4.
\] 
The same lower bound $|x^G| > \frac{1}{2}Qq^{18}$ holds if $x$ is an odd order field automorphism and we note that there are fewer than $\log_2q.q^8 = a_4$ odd order elements in $H$. 

By combining the above estimates, we deduce that \eqref{e:bdnew} holds for all $q \geqs 4$ and the result follows.
\end{proof}

\begin{lem}\label{l:t2_82}
The conclusion to Theorem \ref{t:hat} holds if $G_0 = {\rm P\O}_8^{+}(q)$.
\end{lem}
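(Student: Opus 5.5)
We go through the possibilities for $H = N_G(L)$ listed in Remark \ref{r:S}(ii), with reference to \cite[Table 8.50]{BHR}. These are $L = {\rm L}_3^{\e}(q)$ (with $q \equiv \e \imod{3}$), ${}^3D_4(q_0)$ (with $q = q_0^3$), $\O_8^{+}(2)$ (with $q = p \geqs 3$), ${}^2B_2(8)$ and $A_{10}$ (with $q = 5$), and $A_9$ (with $q = 2$). For each case, the aim is either to verify $\what{Q}(G,H)<1$ or to place $(G,H)$ in \cite[Table 1]{BGS} or in Table \ref{tab:new}.

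\textbf{Small fields.} We handle these computationally. For $q=2$ and $L = A_9$ we have $b(G,H) \geqs 3$, so the pair lies in \cite[Table 1]{BGS}. For $q=3$, $L = \O_8^{+}(2)$ is also in \cite[Table 1]{BGS}; Example \ref{ex:con}(v) confirms non-regularity of the pairs there. For $q=5$ we treat $L \in \{\O_8^{+}(2), A_{10}, {}^2B_2(8)\}$ with {\sc Magma}. We construct $G$ as a permutation group of low degree, using \texttt{AutomorphismGroupSimpleGroup} and \texttt{LowIndexSubgroups}, recover $H$ via \texttt{MaximalSubgroups}, and apply \texttt{QHat}. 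We expect ${\rm P\O}_8^{+}(5).S_3$ with $H = \O_8^{+}(2).S_3$ to give $\what{Q}(G,H) \approx 2.20$, even though $b(G,H)=2$, which accounts for its row in Table \ref{tab:new}. All other groups between $G_0$ and ${\rm Aut}(G_0)$ should give values below $1$. For the cases with $\what{Q}(G,H) \geqs 1$ we then confirm $b(G,H)=2$ via \texttt{RandomReg}. The remaining case $L = {}^2B_2(8)$ has $|H|$ small, and we expect $\what{Q}<1$ directly.

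\textbf{Generic families.} Here we use Lemma \ref{l:favbound}. For $L = \O_8^{+}(2)$ with $p \geqs 7$ we have $|H| \leqs 6|\O_8^{+}(2)| < 2^{29}$. The module is the reduction of the root lattice $E_8/2E_8$, so each involution acts with $\nu(x) \geqs 2$ (reflections are excluded since $L \leqs \O$). Elements outside ${\rm PGL}(V)$ are graph or triality automorphisms, and these have large classes. So $|x^G|$ is at least of order $q^{12}$, and the whole sum is below $1$ for $p \geqs 7$. If needed we refine by treating involutions and elements of order $3$ separately, since only these can have small $\nu$.

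For $L = {\rm L}_3^{\e}(q)$ acting on the adjoint module, we follow the proof of Lemma \ref{l:t2_8}. Involutions act as $(-I_4,I_4)$, which gives $|x^G| > \frac14 q^{16}$, and there are at most $2q^4$ of them. Graph automorphisms have $\nu=3$, with $|x^G| > \frac14 q^{15}$ and fewer than $2q^5$ in $H$. Unipotent elements have Jordan form $(J_3,J_2^2,J_1)$ or $(J_5,J_3)$. Semisimple elements have a $2$-dimensional fixed space. Triality elements in $H$, such as the factor $3$ in ${\rm PGL}_3^{\e}(q) \times 3$, satisfy $|x^G| \approx q^{14}$, with fewer than $q^8\log q$ of them. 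Combining these gives \eqref{e:bdnew} for $q \geqs 4$, and for $q$ small we check directly. For $L = {}^3D_4(q_0)$ we have $|H| < 3\log_2 q \cdot q^{28/3}$, and nontrivial elements of $L$ have $\nu \geqs 2$ since ${}^3D_4(q_0)$ contains no long root elements of $G_0$ except those over $\mathbb{F}_{q_0}$. This gives $|x^G| > \frac18 q^{10}$, which is not enough on its own. We therefore separate out the long root elements of ${}^3D_4(q_0)$, numbering roughly $q_0^{10}$, which have $|x^G| \approx q^{10} = q_0^{30}$; the remaining elements have $|x^G| \gtrsim q^{14}$. The resulting bound is comfortably below $1$ for all $q_0 \geqs 2$.

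\textbf{Main obstacle.} We expect the hardest step to be the triality case with $L = {\rm L}_3^{\e}(q)$ and small $q$ (such as $q=4,5,7,8$), together with tracking how elements of $H \setminus H_0$ fuse into the exotic outer classes of ${\rm Aut}({\rm P\O}_8^{+}(q))$. For these we would fall back on {\sc Magma} computations in low-degree permutation representations, as in Example \ref{ex:con}(v).
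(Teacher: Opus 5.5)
Your case division matches the paper's (small $q$ via \cite{BGS} and {\sc Magma}, the four possibilities for $H_0$ at $q=5$, then the families ${\rm L}_3^{\e}(q)$, ${}^3D_4(q_0)$ and $\O_8^{+}(2)$). However, the generic estimates you write down do not close, and the decisive failure is exactly the triality step you flag as the obstacle.

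\textbf{The ${\rm L}_3^{\e}(q)$ family.} You allow up to $q^8\log q$ triality elements in $H$ with $|x^G|$ of order $q^{14}$. Lemma \ref{l:favbound} then gives a contribution of order $q^2(\log q)^2$, so the claim that \eqref{e:bdnew} holds for $q \geqs 4$ is false for every $q$. The missing idea is that $H\cap G_0={\rm PGL}_3^{\e}(q)$ is the centraliser in $G_0$ of a triality graph automorphism $\theta$. Since $\theta$ has order $3$ and centralises $H\cap G_0$, we have $(\theta^{\pm 1}g)^3=g^3$. Hence the triality graph automorphisms in $H$ lie among the elements $\theta^{\pm1}g$ with $g\in H\cap G_0$ and $g^3=1$, and there are at most $2(1+i_3({\rm PGL}_3^{\e}(q)))<4(q+1)q^5$ of them.

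Against $|x^G|\geqs |G_0|/|G_2(q)|=\frac{1}{4}q^6(q^4-1)^2$, this gives a contribution of roughly $64q^{-2}$. That suffices for $q\geqs 11$ but not for $q=7$, where the paper needs the exact value $i_3({\rm PGL}_3(7))=123578$. Triality graph-field automorphisms (when $q=q_0^3$) must be counted separately: there are at most $4q^{16/3}$ in $H$, with $|x^G|>\frac{1}{4}q^{56/3}$.

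Your description of involutions as $(-I_4,I_4)$ and of the unipotent Jordan forms also applies only for $q$ odd. For $q$ even, including $q=4$, the paper relies on the proof of \cite[Lemma 6.5]{BGS}.

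\textbf{The ${}^3D_4(q_0)$ family.} Once the long root elements are removed, you bound the rest by $|H|^2/q^{14}$. This is about $9(\log_2 q)^2q^{14/3}$, so it is not below $1$ for any $q_0$. Since $|H|^2$ is of order $q^{56/3}$, no two-piece split can work. You need a class-by-class estimate, where for $x\in {}^3D_4(q_0)$ the quantity $|x^G\cap H|$ is roughly $|x^G|^{1/3}$; for this case the paper refers to \cite[Lemma 5.3]{BGS}.

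\textbf{The $\O_8^{+}(2)$ family.} Here $6|\O_8^{+}(2)|$ is just below $2^{30}$, not below $2^{29}$. With your bound on $|H|$, the crude estimate $|H|^2/q^{12}$ exceeds $1$ for every $p\leqs 31$, so the refinement you mention is genuinely needed (the paper cites \cite[Lemma 5.5]{BGS}).

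\textbf{The case $q=5$, $H_0=\O_8^{+}(2)$, $G=G_0.3$ or $G_0.S_3$.} The paper does not run \texttt{QHat} in these groups; the argument in \cite{BGS} needed Breuer's computations \cite{Breuer} here. Instead the paper:
\begin{itemize}
\item computes the contribution of prime order elements of $G_0$ with {\sc Magma};
\item uses \cite[Proposition 3.55]{FPR2} to follow how triality permutes the relevant $G_0$-classes;
\item obtains the triality contribution exactly from the centraliser orders $|G_2(5)|$ and $|{\rm PGU}_3(5)|$;
\item obtains the involutory graph contribution exactly from $|{\rm SO}_7(5)|$ and $|{\rm SO}_3(5)||{\rm SO}_5(5)|$.
\end{itemize}
Your plan to compute directly in ${\rm P\O}_8^{+}(5).S_3$ is at best optimistic. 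Without it, you have no verified value of $\what{Q}(G,H)$ for these two groups, including the entry you predict for Table \ref{tab:new}.
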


\begin{proof}
First recall that our definition of the collection $\mathcal{S}$ excludes any subgroups with socle ${\rm P\O}_8^{-}(q_0)$ (where $q=q_0^2$), $\O_7(q)$ (for $q$ odd) or ${\rm Sp}_6(q)$ (for $q$ even). See Remark \ref{r:S}(ii).

If $q=2$ then $H_0 = A_9$ and $b(G,H) = 4$ by \cite[Theorem 1]{BGS}. Similarly, if $q = 3$ then $H_0 = \O_8^{+}(2)$ and $b(G,H) \geqs 3$. For $q=4$ we have $H_0 = {\rm L}_3(4)$ and the proof of \cite[Lemma 6.5]{BGS} gives $\what{Q}(G,H)<1$. So for the  remainder, we may assume $q \geqs 5$. 

The case $q = 5$ requires special attention because there are several possibilities for $H_0$, namely   
\[
\O_8^{+}(2), \, {\rm U}_3(5), \, {}^2B_2(8), \, A_{10},
\] 
and we need to consider each one in turn.

Suppose $H_0 = {}^2B_2(8)$, in which case $|H_0|<q^7=a_1$. If $x \in H_0$ has prime order, then by considering the action of $\widehat{H}_0 = 2.{}^2B_2(8)$ on $V$, noting that we can use the {\sc Magma} function \texttt{ClassicalMaximals} to construct $\what{H}_0$ as a subgroup of $\O_8^{+}(5)$, we deduce that $|x^G|>\frac{1}{4}q^{16} = b_1$ (minimal if $x$ is an involution, which acts on $V$ as $(-I_4,I_4)$). In addition, if $x \in G$ is a triality graph automorphism then
\[
|x^G| \geqs \frac{|G_0|}{|G_2(q)|} = \frac{1}{4}q^6(q^4-1)^2 = b_2
\]
and we note that $i_3(H \setminus H_0) < q^5 = a_2$. This implies that $\what{Q}(G,H)< a_1^2b_1^{-1}+a_2^2b_2^{-1} < 1$ as required.

Next suppose $H_0 = {\rm U}_3(5)$, so $H \cap G_0 = {\rm PGU}_3(5)$. We can use \texttt{ClassicalMaximals} to construct $3 \times {\rm PGU}_3(5)$ as a subgroup of $\O_8^{+}(5)$, which allows us to determine the action of each prime order element in $H \cap G_0$ on $V$.

Let $x \in H \cap G_0$ be an element of prime order $r$, so $r \in \{2,3,5,7\}$. If $r=2$ then $x$ acts on $V$ as $(-I_4,I_4)$, so $|x^G|>\frac{1}{4}q^{16} = b_1$ and we note that $i_2({\rm PGU}_3(5)) = 525=a_1$. 
Similarly, we have $i_5({\rm PGU}_3(5)) = 15624=a_2$ and we find that $|x^G|>\frac{1}{4}Qq^{16} = b_2$ when $r=5$ (minimal if $x$ has Jordan form $(J_2,J_1)$ on the natural module for ${\rm PGU}_3(5)$, which acts on $V$ as $(J_3,J_2^2,J_1)$). And for $r \in \{3,7\}$ we see that $|x^G|>\frac{1}{2}q^{18} = b_3$ and we note that there are $52550=a_3$ such elements in ${\rm PGU}_3(5)$. Next suppose $x \in H$ is an involutory graph automorphism of $H_0$. There are at most $|{\rm PGU}_3(5):{\rm PSO}_3(5)| = 3150 = a_4$ such elements in $H$ and they act on $V$ as $(-I_5,I_3)$, so $|x^G|>\frac{1}{4}q^{15} = b_4$. Finally, suppose $x \in G$ is a triality graph automorphism and note that $x^G \cap H$ is contained in $3 \times {\rm PGU}_3(5)$ since we may identify $H \cap G_0 = {\rm PGU}_3(5)$ with the centraliser in $G_0$ of a suitable triality graph automorphism. There are at most 
\[
2(1+i_3({\rm PGU}_3(5))) = 33102 = a_5
\]
such elements in $H$ and as above we have 
$|x^G| \geqs \frac{1}{4}q^6(q^4-1)^2 = b_5$. 

Putting all of this together, we deduce that
\[
\what{Q}(G,H) < \sum_{i=1}^5 a_i^2b_i^{-1} < 1
\]
as required.

Now assume $H_0 = A_{10}$, so either $G = G_0$ and $H = A_{10}$, or $G = G_0.2$ (extended by an involutory graph automorphism) and $H = S_{10}$. If $G = G_0$ then there are $12$ conjugacy classes of $A_{10}$ subgroups and using {\sc Magma} (as in Example \ref{ex:con}(i)) we compute
\[
\what{Q}(G,H) = \frac{7392299}{2046484375}.
\]
Similarly, if $G = G_0.2$ and $H = S_{10}$ (so $H \in \mathcal{A}$) then we can construct $G$ and $H$ as follows:
{\small
\begin{verbatim}
L:=NormalSubgroups(GOPlus(8,5):IndexEqual:=2)[3]`subgroup;
f,G,K:=PermutationRepresentation(L:ModScalars:=true);
H:=f(Normaliser(L,ClassicalMaximals("O+",8,5:classes:={9})[5]));
\end{verbatim}}
\noindent noting that we can replace ``$[5]$" in the final line by ``$[6]$" to get a representative of the second conjugacy class of maximal subgroups in $\mathcal{S}$ that are isomorphic to $S_{10}$. In this way, we calculate that 
\[
\what{Q}(G,H) = \frac{231629473}{4092968750}
\]
and so in both cases we conclude that $\what{Q}(G,H)<1$.

To complete the proof for $q=5$, we may assume $H_0 = \O_{8}^{+}(2)$. Here $\pi(H) = \{2,3,5,7\}$ and we have $G = G_0.X$ and $H = H_0.X$ with $X \leqs S_3$. The main theorem of \cite{BGS} gives $b(G,H) = 2$, but the argument presented in the proof of \cite[Lemma 5.5]{BGS} relies on detailed computations due to T. Breuer \cite{Breuer} in the cases where $G = G_0.3$ or $G_0.S_3$. 

First assume $G = G_0$ or $G_0.2$. With the aid of {\sc Magma}, we compute
\[
\what{Q}(G,H) = \frac{1005545081}{2046484375}
\]
when $G = G_0$, and we get 
\[
\what{Q}(G,H) = \frac{1945445081}{2046484375}
\]
for $G = G_0.2$, so in both cases we have $\what{Q}(G,H)<1$.

Next assume $G = G_0.3$. We claim that 
\[
\what{Q}(G,H) = \frac{1676441081}{2046484375}.
\]
To see this, we first use {\sc Magma} (as above) to determine how each $H_0$-class of prime order elements in $H_0$ embeds in $G_0$, and then we appeal to \cite[Proposition 3.55]{FPR2} to study the action of a triality graph automorphism on this collection of $G_0$-classes. This allows us to calculate that
\begin{equation}\label{e:inner}
\sum_{i=1}^7 a_i^2b_i^{-1} = \frac{1005545081}{2046484375}
\end{equation}
is the precise contribution to $\what{Q}(G,H)$ from the prime order elements in $G_0$, where the $a_i$ and $b_i$ terms are defined in Table \ref{tab:aibi}.

\begin{table}
\[
\begin{array}{cclll} \hline
i & r & x & a_i & b_i \\ \hline
1 & 2 & \texttt{2A}, \texttt{2E} & 1575+ 56700 & 42976171875 \\
2 & & \texttt{2B}, \texttt{2C}, \texttt{2D} & 3{\cdot}3780 & 3{\cdot}153562500 \\
3 & 3 & \texttt{3A}, \texttt{3B}, \texttt{3C} & 3{\cdot}2240 & 3{\cdot}201500000 \\  
4 &  &  \texttt{3D} & 89600 & 2619500000000 \\
5 &  & \texttt{3E} & 268800 & 3438093750000 \\
6 &  5 & \texttt{5A}, \texttt{5B}, \texttt{5C} & 3{\cdot}580608 & 
3{\cdot}47528208000000 \\
7 &  7 & \texttt{7A} & 24883200 & 47151000000000000 \\ \hline
 \end{array}
 \]
 \caption{The case $G = {\rm P\O}_8^{+}(5).3$, $H = \O_8^{+}(2).3$}
 \label{tab:aibi}
 \end{table}
 
We can now complete the calculation by computing the contribution from the triality graph automorphisms in $G$. First observe that $G$ contains four conjugacy classes of triality graph automorphisms, two with $C_{G_0}(x) = G_2(5)$ and two with $C_{G_0}(x) = {\rm PGU}_3(5)$. By arguing as in the proof of \cite[Lemma 2.14]{FPR4}, we calculate that the contribution to $\what{Q}(G,H)$ from triality graph automorphisms is exactly
\begin{equation}\label{e:tri}
2(a_8^2b_8^{-1}+a_9^2b_9^{-1}) = \frac{5367168}{16371875},
\end{equation}
where
\[
a_8 = \frac{|\O_{8}^{+}(2)|}{|G_2(2)|},\; b_8 = \frac{|G_0|}{|G_2(5)|},\;\; a_9 = \frac{|\O_{8}^{+}(2)|}{|{\rm PGU}_3(2)|},\; b_9 = \frac{|G_0|}{|{\rm PGU}_3(5)|},
\]
and the claim follows since 
\[
\frac{1005545081}{2046484375} + \frac{5367168}{16371875} = \frac{1676441081}{2046484375}.
\]

Finally, let us assume $G = G_0.S_3$. Here we claim that 
\[
\what{Q}(G,H) = \frac{4496141081}{2046484375} > 1
\]
and so this case is recorded in Table \ref{tab:new}. To see this, first note that \eqref{e:inner} gives the contribution from the elements of prime order in $G_0$. Now the contribution from the involutions in $G \setminus G_0$ is given by 
\[
a_{10}^2b_{10}^{-1} + a_{11}^2b_{11}^{-1} = \frac{69408}{50375}, 
\]
where
\[
a_{10} = 360,\;\; b_{10} = 3\cdot \frac{|{\rm SO}_{8}^{+}(5)|}{2|{\rm SO}_7(5)|} = 117000
\]
and
\[
a_{11} = 113400,\;\; b_{11} = 3\cdot \frac{|{\rm SO}_{8}^{+}(5)|}{2|{\rm SO}_3(5)||{\rm SO}_5(5)|} = 47604375000.
\]
Similarly, the contribution from elements of order $3$ in $G \setminus G_0$ is given by the expression in \eqref{e:tri}. Therefore,
\[
\what{Q}(G,H) = \frac{1005545081}{2046484375} + \frac{69408}{50375} + \frac{5367168}{16371875} = \frac{4496141081}{2046484375}
\]
as claimed.

For the remainder, we may assume $q \geqs 7$. If $H_0 = \O_8^{+}(2)$ then we refer the reader to the proof of \cite[Lemma 5.5]{BGS}. Similarly, the case $H_0 = {}^3D_4(q_0)$ with $q = q_0^3$ is handled in the proof of \cite[Lemma 5.3]{BGS}. Therefore, to complete the proof we may assume $H_0 = {\rm L}_3^{\e}(q)$, where $q \equiv \e \imod{3}$. Here $H \cap G_0 = {\rm PGL}_3^{\e}(q)$ and this case is labelled $(\mathcal{D}1)$ in \cite[Table 11]{BGS}. The proof of \cite[Lemma 6.5]{BGS} asserts that $\widehat{Q}(G,H)<1$ and details are given for $q$ even. For the sake of completeness, here we give an argument for $q$ odd, which relies on the fact that we can identify $V$ with the adjoint module for $\widehat{H}_0 = {\rm SL}_3^{\e}(q)$.

If $x \in H_0$ is an involution, then $|x^G|>\frac{1}{4}Qq^{16} = b_1$ since $x$ acts on $V$ as $(-I_4,I_4)$ and we note that $i_2(H_0) < 2q^4 = a_1$. If $x \in H$ is an involutory graph automorphism of $H_0$, then $x$ embeds in $G$ as an involution of type $(-I_5,I_3)$, so $|x^G|>\frac{1}{4}q^{15} = b_2$ and there are no more than $2q^5 = a_2$ such elements in $H$. To complete the analysis of involutions, suppose $q = q_0^2$ (so $\e = +$) and $x \in H$ is an involutory field or graph-field automorphism of $H_0$. There are at most
\[
\frac{|{\rm PGL}_3(q)|}{|{\rm PGL}_3(q_0)|} + \frac{|{\rm PGL}_3(q)|}{|{\rm PGU}_3(q_0)|} < 2q^4 = a_3
\]
such elements in $H$ and we have $|x^G|>\frac{1}{4}q^{14} = b_3$ since $x$ also acts as an involutory field or graph-field automorphism of $G_0$. Next assume $x \in H_0$ has order $p$. Then by arguing as in the proof of the previous lemma (for the case $G_0 = {\rm P\O}_8^{-}(q)$) we see that $|x^G|>\frac{1}{4}q^{16} = b_4$ and there are fewer than $q^6=a_4$ such elements in $H$. Similarly, if $x \in H$ is semisimple of odd prime order, or if $x$ is a field automorphism of odd order, then $|x^G|>\frac{1}{2}Qq^{18} = b_5$ and in total there are at most $\log_3q.q^8 = a_5$ such elements in $H$. 

To complete the analysis of this case, we may assume $x \in G$ is a triality graph or graph-field automorphism of $G_0$. If $x$ is a graph automorphism, then $|x^G| \geqs \frac{1}{4}q^6(q^4-1)^2 = b_6$ and we note that there are at most 
\[
2(1+i_3({\rm PGL}_3^{\e}(q))) < 4(q+1)q^5 = a_6
\]
such elements in $H$. And by arguing as in the proof of \cite[Lemma 6.5]{BGS}, if $q = q_0^3$ and $x \in G$ is a triality graph-field automorphism, then $|x^G|>\frac{1}{4}q^{56/3} = b_7$ and there are at most 
\[
2\cdot \frac{|{\rm PGL}_3^{\e}(q)|}{|{\rm PGL}_3^{\e}(q_0)|} < 4q^{16/3} = a_7
\]
such elements in $H$. 

Bringing these estimates together, we deduce that
\[
\what{Q}(G,H) < \sum_{i=1}^7 a_i^2b_1^{-1},
\]
which is less than $1$ for $q \geqs 11$. Finally, if $q=7$ then $i_3({\rm PGL}_3(q)) = 123578$ and by setting $a_6 = 2(1+123578) = 247158$ in the previous bound we conclude that $\what{Q}(G,H)<1$, as required.
\end{proof}

\begin{lem}\label{l:t2_7}
The conclusion to Theorem \ref{t:hat} holds if $n=7$. 
\end{lem}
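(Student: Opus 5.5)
For $n=7$ the groups are $G_0 = {\rm L}_7^{\e}(q)$ and $G_0 = \O_7(q)$ with $q$ odd; the case $q$ even, where $\O_7(q)\cong{\rm Sp}_6(q)$, is treated with $n=6$. I will run through the subgroups $H \in \mathcal{S}$ listed in \cite[Tables 8.35--8.40]{BHR}.

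\emph{Linear and unitary groups.} If $G_0 = {\rm L}_7^{\e}(q)$, the candidates are small: $H_0 = {\rm U}_3(3)$ with $q = p \equiv \e \imod{4}$, $p \geqs 5$ (which lies in $\mathcal{C}$), and ${\rm L}_2(q)$-type or ${\rm L}_2(13)$ subgroups. In every case $|H|$ is at most roughly $\log_p q \cdot q^3$ or $2|{\rm U}_3(3)|$. For the ${\rm U}_3(3)$ case I would compute $\nu(x)$ directly from the $7$-dimensional Brauer character; I expect $\nu(x) \geqs 2$, with the smallest value coming from involutions acting as $(-I_3,I_4)$ or $(-I_4,I_3)$. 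Bounding $|x^G|$ for non-inner elements as in \cite[Section 3]{FPR2} should then give $|x^G|>\frac{1}{2}Qq^{12}$ in all cases. Lemma \ref{l:favbound} then gives $\what{Q}(G,H)<1$ once $q$ is not tiny, and the few small $q$ left over can be checked with \texttt{QHat} in {\sc Magma}.

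\emph{The groups $G_0 = \O_7(q)$.} I treat the subgroups in four groups.
\begin{itemize}
\item[(1)] $H_0 = G_2(q)$. This is recorded in \cite[Table 1]{BGS} with $b(G,H) = 3$, so by Theorem \ref{t:hat}(i) nothing needs proving; here $\what{Q}(G,H) \geqs 1$ automatically.
\item[(2)] $H_0 = {\rm Sp}_6(2)$ with $q=p$, and $H_0 = A_9$ or $A_8$ in $\mathcal{A}$. For $q \geqs 7$ I would use the proofs of \cite[Lemmas 3.5, 5.5]{BGS}. For $q \in \{3,5\}$ I would use {\sc Magma} (Example \ref{ex:con}(i)) to compute $\what{Q}(G,H)$ exactly, and compare the outcome with \cite[Table 1]{BGS}, where the cases with $b(G,H)\geqs 3$ are listed; for instance, $\O_7(3)$ with $S_9$ or ${\rm Sp}_6(2)$ may well appear there.
\item[(3)] The remaining subgroups, $H_0 = {\rm L}_2(13)$ and ${\rm L}_2(q)$ with $p \geqs 7$ (the principal ${\rm SO}_3$ embedding). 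These satisfy $|H| < 2\log_p q\cdot q^3$, and Theorem \ref{gursax} does not apply here since $n=7 \geqs 6$ but $\nu(H)>\max\{2,\sqrt{7}/2\}$ means only $\nu(H) \geqs 3$. That is enough: it gives $|x^G| > \frac{1}{4}Qq^{12}$ by Proposition \ref{p:xgbd}, and Lemma \ref{l:favbound} then gives $\what{Q}(G,H) < 16Q^{-1}(\log_p q)^2 q^{-6}<1$.
\end{itemize}

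\emph{Expected obstacle.} The hard part is the collection $\mathcal{C}$ case $H_0 = {\rm Sp}_6(2)$ at small $q$. For $q=3$ it is presumably in \cite[Table 1]{BGS}, but at $q = 5,7,11$ the crude bounds may fail. There I would follow the $\O_8^{+}(2)$ argument of Lemma \ref{l:t2_82}: build $2\times{\rm Sp}_6(2) < \O_7(q)$ with \texttt{ClassicalMaximals}, determine how each prime-order class fuses in $G$, and sum $a_i^2 b_i^{-1}$ exactly. I would check whether any case gives $\what{Q}\geqs 1$ with $b(G,H)=2$. Since Table \ref{tab:new} lists no $n=7$ case, I expect the computation to confirm $\what{Q}(G,H)<1$ throughout.
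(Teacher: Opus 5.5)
Your case division matches the paper's: ${\rm U}_3(3)$ in ${\rm L}_7^{\epsilon}(q)$, then $G_2(q)$ and all of $\Omega_7(3)$ disposed of via \cite[Table 1]{BGS}, then $H={\rm Sp}_6(2)$ in $\Omega_7(p)$ as the only real work. \textbf{That last case is not actually proved.}

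For $q=p\geqslant 7$ you defer to the proofs of \cite[Lemmas 3.5, 5.5]{BGS}. But Lemma 3.5 concerns the collection $\mathcal{A}$, and the paper uses Lemma 5.5 only for $\Omega_8^{+}(2)$. The paper proves the ${\rm Sp}_6(2)$ case itself, so you cannot assume a usable bound on $\widehat{Q}$ is available there. Your fallback is to compute the fusion and sum $a_i^2b_i^{-1}$, expecting $\widehat{Q}<1$ ``since Table \ref{tab:new} lists no $n=7$ case''. That expectation is circular. As stated it is also a separate computation for each prime, while $q$ ranges over infinitely many primes.

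The missing idea is what makes the computation uniform in $q$:
\begin{itemize}
\item For $q=p\geqslant 11$ we have $p\nmid|{\rm Sp}_6(2)|$, so every prime order element of $H$ is semisimple in $G$. Its eigenvalues on $\bar{V}$ are then determined by the ordinary $7$-dimensional character of ${\rm Sp}_6(2)$, independently of $q$.
\item This gives the data of Table \ref{tab:sp62}. In particular, the $63$ transvections (class \texttt{2A}) act as $(-I_6,I_1)$, so $\nu(H)=1$ and $|x^G|$ is only of order $q^6$.
\item Consequently, no bound of the form $|H|^2/\min|x^G|$ works until $q$ is in the hundreds, so the classes must be separated. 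Doing this gives $\sum a_i^2b_i^{-1}<1$ for all $q\geqslant 11$, and $q\in\{5,7\}$ are settled with \texttt{QHat}.
\end{itemize}

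Several other claims are wrong, though they do not affect the conclusion:
\begin{itemize}
\item By \cite[Table 8.40]{BHR}, the only subgroups in $\mathcal{S}$ for $\Omega_7(q)$ have socle $G_2(q)$, ${\rm Sp}_6(2)$, or $A_9$ (the last only for $q=3$). The ${\rm L}_2(13)$, ${\rm L}_2(q)$ and $A_8$ subgroups you list are contained in $G_2(q)$ or ${\rm Sp}_6(2)$, so they are not maximal.
\item For ${\rm L}_7^{\epsilon}(q)$ only ${\rm U}_3(3)$ occurs, since the $7$-dimensional representations of those ${\rm L}_2$ groups are orthogonal.
\item Here $H\cong{\rm Sp}_6(2)$, not $2\times{\rm Sp}_6(2)$.
\item For $H_0=G_2(q)$ we have $b(G,H)=4$, not $3$.
\item For $n=7$ orthogonal, Proposition \ref{p:xgbd} gives exponent $9$, not $12$.
\item Your remark that Theorem \ref{gursax} ``does not apply'' contradicts your own use of it in the same sentence.
\item The involutions of ${\rm U}_3(3)$ act as $(-I_2,I_5)$. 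So $\nu(H)=2$ and $|x^G|>\frac{1}{2}Qq^{20}$, which handles every $q\geqslant 5$ at once with no computation.
\end{itemize}
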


\begin{proof}
Suppose $G_0 = {\rm L}_{7}^{\e}(q)$. Here $H_0 = {\rm U}_3(3)$ with $q = p \equiv \e \imod{4}$ and $q \geqs 5$. Then $|H|<q^6$ and $\nu(H) = 2$, so $|x^G|>\frac{1}{2}Qq^{20}$ for all $x \in H$ of prime order and we conclude that $\what{Q}(G,H) < 2Q^{-1}q^{-8}<1$.

Now assume $G_0 = \O_{7}(q)$. If $H_0 = G_2(q)$, then $b(G,H) = 4$ by the main theorem of \cite{BGS}. Similarly, if $q=3$ then the main theorem of \cite{BGS} gives $b(G,H) \geqs 3$ for all $H \in \mathcal{S}$, so we may assume $q =p \geqs 5$, $G = G_0$ and $H = {\rm Sp}_6(2)$. For $q = 5$, we can use the function \texttt{QHat} (see Example \ref{ex:qhat}) to compute
\[
\what{Q}(G,H) = \frac{29656873}{31484375},
\]
and similarly we get
\[
\what{Q}(G,H) = \frac{57275}{722701}
\]
when $q=7$. 

Now assume $q \geqs 11$, so every prime order element in $H$ is semisimple when viewed as an element of $G$. In particular, this means that we can inspect the character table of $H$ (see \cite{GAPCTL}, for example) to determine the embedding of each $H$-class of elements of prime order. For example, if $x \in H$ is an element in the \texttt{3A} conjugacy class of $H$ and $\chi$ is the character of the relevant representation $\rho:H \to G$, then we read off $|x^H| = 672$ and $\chi(x) = 4$. This means that $x$ acts on $\bar{V} = V \otimes k$ as $(I_5,\omega,\omega^2)$, up to conjugacy, where $k$ is the algebraic closure of $\mathbb{F}_q$ and $\omega \in k$ is a primitive cube root of unity, and this implies that
\[
|x^G| \geqs \frac{|{\rm SO}_7(q)|}{|{\rm SO}_5(q)||{\rm GU}_1(q)|}>\frac{1}{2}Qq^{10}.
\]

\renewcommand{\arraystretch}{1.1}
\begin{table}
\[
\begin{array}{ccllll} \hline
i & r & x & \bar{V}{\downarrow}\,x & a_i & b_i \\ \hline
1 & 2 & \texttt{2A} & (-I_6,I_1) & 63 & \frac{1}{4}q^{6} \\
2 &  & \texttt{2B}, \texttt{2D} &  (-I_4,I_3) & 315+3780 & \frac{1}{4}q^{12} \\
3 &  & \texttt{2C} & (-I_2,I_5) & 945 & \frac{1}{4}Qq^{10} \\
4 & 3 & \texttt{3A} & (I_5,\omega,\omega^2) &  672 & \frac{1}{2}Qq^{10} \\
5 &  & \texttt{3B} & (I_1,\omega I_3,\omega^2I_3) &  2240 & \frac{1}{2}Qq^{12} \\
6 & & \texttt{3C} & (I_3,\omega I_2,\omega^2I_2) &  13440 & \frac{1}{2}Qq^{14} \\
7 & 5 & \texttt{5A} & (I_3,\omega, \ldots, \omega^4) & 48384 & \frac{1}{2}Q^2q^{16} \\ 
8 & 7 & \texttt{7A} & (I_1,\omega, \ldots, \omega^6) & 207360 & \frac{1}{2}Q^3q^{18} \\ \hline
\end{array}
\]
\caption{The case $G = \O_7(q)$, $H = {\rm Sp}_6(2)$, $q =p \geqs 11$}
\label{tab:sp62}
\end{table}
\renewcommand{\arraystretch}{1}

In this way, we deduce that 
\[
\what{Q}(G,H) < \sum_{i=1}^8 a_i^2b_i^{-1} < 1
\]
for all $q \geqs 11$, where the $a_i$ and $b_i$ terms are defined in Table \ref{tab:sp62} (in each case, $\omega \in k$ is a primitive $r$-th root of unity).
\end{proof}

\begin{lem}\label{l:t2_6}
The conclusion to Theorem \ref{t:hat} holds if $n=6$. 
\end{lem}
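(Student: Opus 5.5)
The plan is to work through the possibilities for $G_0$ with $n=6$, namely ${\rm L}_6^{\e}(q)$ and ${\rm PSp}_6(q)$, with the maximal subgroups $H \in \mathcal{S}$ read off from the relevant tables in \cite[Chapter 8]{BHR}. Theorem \ref{gursax} applies here, but nearly every candidate lies in the collection $\mathcal{C}$ of Table \ref{ctab}. For such $H$ we only have the weaker information $\nu(H) \geqs 2$, and in some cases (e.g. $H_0 = G_2(q)$ in ${\rm Sp}_6(q)$ with $q$ even) even $\nu(H) = 1$ because $H$ contains transvections. In each case the aim is either (a) to show that $b(G,H) \geqs 3$, so that $(G,H)$ already appears in \cite[Table 1]{BGS} and hence $\what{Q}(G,H) \geqs 1$ automatically (because $Q \leqs \what{Q}$), or (b) to prove $\what{Q}(G,H)<1$. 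The lemma then says that no case with $n=6$ belongs in Table \ref{tab:new}.

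\textbf{Case ${\rm PSp}_6(q)$ with $H_0 = G_2(q)$ and $q$ even.} The main theorem of \cite{BGS} gives $b(G,H) \geqs 3$. Indeed, this action is essentially that of $\O_7(q) \cong {\rm Sp}_6(q)$ on the cosets of $G_2(q)$, which also appears in Theorem \ref{t:main}(iv)(a). So we simply cite (i) of Theorem \ref{t:hat}. The same applies to any other case listed in \cite[Table 1]{BGS}, such as small-$q$ cases for ${\rm U}_6(2)$ with $H_0 = {\rm U}_4(3)$ or ${\rm M}_{22}$, and those with $H_0 = {\rm J}_2$ or ${\rm U}_3(3)$ for small $q$. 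We check these off against that table.

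\textbf{Generic remaining cases.} These are $H_0 = {\rm L}_3^{\e}(q)$ in ${\rm L}_6^{\e}(q)$ (acting on the symmetric square of the natural module, $p \geqs 3$), together with the $\mathcal{C}$-cases with $H_0$ in $\{A_6, A_7, {\rm L}_3(4), {\rm U}_4(3), {\rm M}_{12}, {\rm J}_2, {\rm U}_3(3)\}$ for larger $q$. For these we argue as in the earlier lemmas using Lemma \ref{l:favbound}. We split the prime order elements of $H$ according to $\nu(x)$. Elements with $\nu(x)=2$ are mainly involutions, for instance those acting as $(-I_2,I_4)$ or $(-I_4,I_2)$, or unipotent elements coming from root elements. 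For these we count $i_2(H)$ and $i_p(H)$ precisely, using the character tables in \cite{GAPCTL}, or for $H_0={\rm L}_3^{\e}(q)$ the symmetric-square action, and we pair these counts with the lower bounds on $|x^G|$ from \cite[Section 3]{FPR2}. All other elements, including field, graph and graph-field automorphisms, have $|x^G|$ of order roughly $q^{16}$ or more in the linear and unitary cases. Since $|H| < q^{9}\log q$ (or $|H|$ is bounded absolutely), the estimate $\sum a_i^2 b_i^{-1}$ gives $\what{Q}(G,H)<1$ once $q$ is moderately large.

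\textbf{Small fields.} The finitely many remaining small-field cases, for example ${\rm L}_6^{\e}(p)$ with $p \in \{5,7,11,13\}$ and $H_0 = A_7$ or ${\rm L}_3(4)$, and ${\rm PSp}_6(q)$ with $H_0 = {\rm J}_2$ and $q \in \{p,p^2\}$ small, will be handled with {\sc Magma} via \texttt{QHat} (Example \ref{ex:qhat}), constructing $G$ and $H$ as in Example \ref{ex:con}.

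I expect the main obstacle to be ${\rm L}_6^{\e}(q)$ with $H_0 = {\rm L}_3^{\e}(q)$. Here $|H|$ is about $q^8$ while the smallest classes meeting $H$ have size only about $q^{10}$ (involutions with $\nu(x)=2$), so the crude bound fails. The fix will be a careful class-by-class count: identify the Jordan/eigenvalue structure of each type of element of ${\rm SL}_3^{\e}(q)$ on $S^2(W)$. For example, an involution $(-1,-1,1)$ on $W$ acts as $(-I_2,I_4)$ on $V$, and there are about $q^4$ of these, giving a contribution of order $q^8/q^{8}$. This term must be bounded sharply, using $Q$ factors and exact class sizes, to stay below $1$, and small $q$ must again be checked computationally.
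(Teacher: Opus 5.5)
Your skeleton is the paper's. You go through the $\mathcal{S}$-subgroups for $n=6$ from \cite{BHR}. You dispose of those with $b(G,H)\geqs 3$ via \cite[Table 1]{BGS}: ${\rm U}_6(2)$ with $H_0={\rm U}_4(3)$ or ${\rm M}_{22}$, ${\rm Sp}_6(2)$ with $H_0={\rm U}_3(3)$, and ${\rm Sp}_6(q)$ with $q\geqs 4$ even and $H_0=G_2(q)$. Otherwise you bound $\what{Q}(G,H)$ by Lemma \ref{l:favbound}, with {\sc Magma} for small $q$. The paper also simply cites \cite{BGS} for $H_0={\rm L}_3^{\e}(q)$, ${\rm U}_4(3)$ and ${\rm L}_2(q)<{\rm PSp}_6(q)$ rather than redoing them.

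\textbf{The gap: $\nu(H)\geqs 2$ is assumed, not proved.} You treat $\nu(H)\geqs 2$ for $H\in\mathcal{C}$ as known, but Theorem \ref{gursax} gives nothing for $\mathcal{C}$. The uniform bounds rest on it: $|x^G|>\frac{1}{2d}q^{14}$ for involutions and $>\frac12Qq^{15}$ for odd prime order in ${\rm L}_6^{\e}(q)$, $q\geqs 5$; and $|x^G|>\frac14Qq^{10}$ for odd prime order in ${\rm PSp}_6(q)$, $q\geqs 7$. The paper proves it as follows.
\begin{itemize}
\item For $H_0=A_6,A_7,{\rm L}_3(4)$ it uses \cite{Kan} to exclude transvections, and character tables to exclude elements of prime order $r\ne p$ with a $5$-dimensional eigenspace on $\bar V$.
\item In ${\rm PSp}_6(q)$, nontrivial semisimple elements automatically have $\nu\geqs 2$, so only transvections must be excluded. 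This is checked directly for ${\rm U}_3(3)$ at $q=7$ and ${\rm J}_2$ at $q=9$.
\end{itemize}
Your illustrative claim is also false: $G_2(q)$, $q$ even, contains no transvections of ${\rm Sp}_6(q)$. Its involutions are long or short root elements, acting on the $6$-space with $\nu=2$ and $\nu=3$ respectively. This error is harmless only because $b(G,H)=4$ there.

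\textbf{Misdiagnosed obstacle.} Your ``main obstacle'' rests on a wrong class size. In ${\rm L}_6^{\e}(q)$ an involution acting as $(-I_2,I_4)$ has $|x^G|\approx |{\rm GL}_6(q)|/|{\rm GL}_2(q)||{\rm GL}_4(q)|\approx q^{16}$; it is transvections that give $q^{10}$.
\begin{itemize}
\item The roughly $q^4$ such involutions in $H_0={\rm L}_3^{\e}(q)$ therefore contribute $O(q^{-8})$, not $O(1)$.
\item Every other prime order element of $H$ lies in a $G$-class of size at least about $q^{17}$. This covers unipotent and semisimple elements, field automorphisms, and graph automorphisms. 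The graph automorphisms come from symmetric forms on $W$, hence are of orthogonal type on $S^2W$.
\end{itemize}
So this case is routine and needs no sharp estimate.

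The genuinely tight estimates are in ${\rm PSp}_6(q)$, where the $\nu=2$ involutions $(-I_2,I_4)$ have $|x^G|\approx q^8$ only. Your generic ``$q^{16}$ or more'' does not apply there. Even in ${\rm L}_6^{\e}(q)$, symplectic-type graph involutions only give $\frac{1}{2d}q^{14}$. For $H_0={\rm J}_2$, $q\geqs 9$, the paper has to separate the \texttt{2A}, \texttt{2B}, field-automorphism, \texttt{3A} and remaining classes.

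\textbf{Missing cases.} Your case list omits all the non-$\mathcal{C}$ subgroups: ${\rm L}_2(11)$ in ${\rm L}_6^{\e}(q)$, and $A_5$, $A_7$, ${\rm L}_2(7)$, ${\rm L}_2(13)$ and ${\rm L}_2(q)$ with $p\geqs 7$ in ${\rm PSp}_6(q)$. Also, no ${\rm J}_2$ case lies in \cite[Table 1]{BGS}. These cases are easy, but they have to be treated.
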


\begin{proof}
To begin with, let us assume $G_0 = {\rm L}_{6}^{\e}(q)$. If $q=2$ then $\e=-$, $H_0 = {\rm U}_4(3)$ or ${\rm M}_{22}$ and the main theorem of \cite{BGS} gives $b(G,H) \geqs 3$. Now assume $q \geqs 3$. If $H_0 = {\rm L}_3^{\e}(q)$ then we are in case $(\mathcal{C}1)$ from \cite[Table 5]{BGS} and we refer the reader to the proof of \cite[Lemma 5.2]{BGS}. Similarly, the case $H_0 = {\rm U}_4(3)$ is labelled $(\mathcal{C}12)$ in \cite[Table 5]{BGS} and the details are given in the proof of \cite[Lemma 5.4]{BGS}. To complete the argument for linear and unitary groups, we may assume $q \geqs 5$ and $H_0 = A_6$, $A_7$, ${\rm L}_2(11)$ or ${\rm L}_3(4)$. 

First we claim that $\nu(H) \geqs 2$ in each of these cases. This follows from Theorem \ref{gursax} for $H_0 = {\rm L}_2(11)$, whereas the three remaining cases are recorded in Table \ref{ctab}. By the main theorem of \cite{Kan}, none of these subgroups contain unipotent elements with Jordan form $(J_2,J_1^4)$ on $V$. And by inspecting the relevant character tables, one can also check that $\nu(x) \geqs 2$ for every element $x \in H \cap {\rm PGL}(V)$ of prime order $r \ne p$. This justifies the claim.

Next observe that $i_2(H) \leqs 1963 = a_1$ and note that the lower bound $\nu(H) \geqs 2$ implies that $|x^G|>\frac{1}{2d}q^{14} = b_1$ for every involution $x \in H$, where $d = (6,q-\e)$ (minimal if $x \in G$ is an involutory graph automorphism with $C_G(x)$ of type ${\rm Sp}_6(q)$). In addition, we note that $H$ contains at most $a_2 = 18656$ elements of odd prime order and we have $|x^G|>\frac{1}{2}Qq^{15} = b_2$ for all such elements. These estimates imply that $\what{Q}(G,H) < a_1^2b_1^{-1} + a_2^2b_2^{-1} < 1$ for all $q \geqs 5$, as required.

To complete the proof, we may assume $G_0 = {\rm PSp}_6(q)$.  If $H_0$ is a group of Lie type in characteristic $p$, then either $H_0 = {\rm L}_2(q)$ and $p \geqs 7$, or $H_0 = G_2(q)$ and $q \geqs 4$ is even. In the latter case, the main theorem of \cite{BGS} gives $b(G,H) = 4$, while the proof of \cite[Lemma 6.2]{BGS} shows that $\what{Q}(G,H)<1$ in the former case. 

For the remainder, we may assume $H_0$ is not a group of Lie type in characteristic $p$. If $q = 2$ then $H_0 = {\rm U}_3(3)$ is the only possibility and \cite[Theorem 1]{BGS} states that $b(G,H) = 4$. The cases $q \in \{3,5\}$ can be handled using {\sc Magma}. For example, if $q =5$ and $H_0 = {\rm J}_2$, then we compute
\[
\what{Q}(G,H) \leqs \frac{484897}{1259375},
\]
with equality if $G = G_0.2$. Finally, suppose $q \geqs 7$ is odd (note that if $q \geqs 4$ is even, then $H_0 = G_2(q)$ is the only possibility). By inspecting \cite[Table 8.29]{BHR} we see that $H_0$ is one of the following:
\[
A_5, \, A_7, \, {\rm L}_2(7), \, {\rm L}_2(13), \, {\rm U}_3(3), \, {\rm J}_2.
\]

In every case, we claim that $\nu(H) \geqs 2$. This follows immediately from Theorem \ref{gursax} unless $H_0 = {\rm U}_3(3)$ or ${\rm J}_2$. For $H_0 = {\rm U}_3(3)$ the claim is clear unless $q = 7$ (since we have $\nu(x) \geqs 2$ for every nontrivial semisimple element $x \in G$), in which case we can use {\sc Magma} to check that $\nu(x) = 5$ for every element $x \in H_0$ of order $7$. Similarly, if $H_0 = {\rm J}_2$ then we may assume $q = 9$ and it is easy to check that $\nu(x) \geqs 3$ for all $x \in H_0$ of order $3$. This justifies the claim. 

Suppose $H_0 \ne {\rm J}_2$. Then $i_2(H) \leqs 315 = a_1$ and the claim implies that $|x^G|>\frac{1}{2}q^8 = b_1$ for every involution $x \in H$. In addition, $H$ contains at most $a_2 = 2456$ elements of odd prime order and we note that $|x^G|>\frac{1}{4}Qq^{10} = b_2$ for all such elements. These estimates imply that  $\what{Q}(G,H) < a_1^2b_1^{-1} + a_2^2b_2^{-1} < 1$ for all $q \geqs 7$.

Finally, let us assume $H_0 = {\rm J}_2$. By inspecting \cite[Table 8.29]{BHR}, we may assume $q \geqs 9$. If $x \in H_0$ is an involution, then either $x \in \texttt{2A}$ and we have $\nu(x) = 2$ and $|x^G|>\frac{1}{2}q^8 = b_1$, or $x \in \texttt{2B}$ and $\nu(x) = 3$, which implies that $|x^G|>\frac{1}{4}Qq^{12} = b_2$. Therefore, the contribution to $\what{Q}(G,H)$ from the involutions in $H_0$ is less than $a_1^2b_1^{-1}+a_2^2b_2^{-1}$, where $a_1 = 315$ and $a_2 = 2520$. And if $x \in H \setminus H_0$ is an involution, then $q = p^2$ and $x$ is an involutory field automorphism of $G_0$ (see \cite[Table 8.29]{BHR}). So the contribution from the latter involutions is at most $a_3^2b_3^{-1}$, where $a_3 = 1800$ and $b_3 = \frac{1}{4}q^{21/2}$. Finally, let us assume that $x \in H_0$ has odd prime order. Here $\nu(x) \geqs 3$, with equality if and only if $x$ is in the $H_0$-class labelled \texttt{3A}. In the latter case, we have $|x^G|>\frac{1}{2}Qq^{12} = b_4$ and we note that the \texttt{3A} class in $H_0$ has size $a_4 = 560$. For all other elements $x \in H_0$ of odd prime order, we have $\nu(x) \geqs 4$ and thus $|x^G|>\frac{1}{2}Qq^{14} = b_5$.  Since there are $a_5 = 131984$ such elements in $H_0$, we conclude that
\[
\what{Q}(G,H) < \sum_{i=1}^5 a_i^2b_i^{-1} < 1
\]
for all $q \geqs 9$.
\end{proof}

\begin{lem}\label{l:t2_5}
The conclusion to Theorem \ref{t:hat} holds if $n=5$. 
\end{lem}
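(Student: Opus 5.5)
The plan is to enumerate the possibilities directly. With $n=5$ and the groups in \eqref{e:groups}, the only options are $G_0 = {\rm L}_5^{\e}(q)$. By \cite[Tables 8.18--8.21]{BHR}, the maximal subgroups $H \in \mathcal{S}$ are then restricted to the following cases:
\begin{itemize}
\item $H_0 = {\rm L}_2(11)$, with $\e=+$, $q=p \equiv 1,3,4,5,9 \imod{11}$, or with $\e=-$ under the analogous congruences;
\item $H_0 = {\rm U}_4(2)$, with $\e=-$ and $q=p \equiv 5 \imod{6}$, or with $\e=+$ and $q=p \equiv 1 \imod{6}$;
\item $H_0 = \O_5(q) = {\rm PSp}_4(q)$, with $q$ odd, acting on the $5$-dimensional orthogonal module. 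This subgroup is in $\mathcal{C}_8$ rather than $\mathcal{S}$, so it does not arise;
\item $H_0 = {\rm M}_{11}$ with $q=3$, and $H_0 = {\rm L}_2(q)$ acting on the symmetric power ${\rm Sym}^4$ with $p \geqs 5$.
\end{itemize}
I will read off the exact list, including the structure of $H = N_G(H_0)$ and the conditions on $q$, from the relevant tables. I will then separate the finitely many small cases, namely $q \leqs 5$ or so together with the specific pairs in Table \ref{tab:new} such as ${\rm U}_5(2)$ with $H = {\rm L}_2(11)$ or ${\rm PGL}_2(11)$, from the generic range of $q$.

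For the small cases, I will construct $G$ and $H$ in {\sc Magma} as in Example \ref{ex:con}(i),(iv), using \texttt{ClassicalMaximals} with \texttt{classes:=\{9\}} or \texttt{MaximalSubgroups}. I will then apply \texttt{QHat} from Example \ref{ex:qhat} to compute $\what{Q}(G,H)$ exactly. This should produce the two ${\rm U}_5(2)$ rows of Table \ref{tab:new}, with values $1.39$ and $1.62$. Base size $2$ in these cases is already known from \cite{BGS}. The cases in which \cite[Table 1]{BGS} records $b(G,H) \geqs 3$, such as ${\rm L}_5(3)$ with ${\rm M}_{11}$ if it is listed there, automatically have $\what{Q}(G,H) \geqs Q(G,H) = 1$. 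Every other small case should give $\what{Q}<1$ in {\sc Magma}.

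For generic $q$, the subgroups $H_0 \in \{{\rm L}_2(11), {\rm M}_{11}, {\rm U}_4(2)\}$ have bounded order, say $|H| \leqs 2|{\rm U}_4(2)|$. Since $n=5<6$, Theorem \ref{gursax} is not available, so I will establish $\nu(H) \geqs 2$ directly. For semisimple elements, I will check the character values: $\nu(x)=1$ would force $\chi(x)$ to be $5-1+\l$ times a scalar, which does not occur for these characters. For unipotent elements, transvections are excluded because $H$ is irreducible and not of Lie type in characteristic $p$, appealing to \cite{Kan}-type results as in Lemma \ref{l:t2_6}. The bound $\nu(x) \geqs 2$ gives $|x^G| > \frac{1}{2}Qq^{8}$ or so, while graph automorphisms satisfy $|x^G|>\frac{1}{2d}q^{9}$. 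Lemma \ref{l:favbound}, applied with $a_1=i_2(H)$ and $a_2$ the number of odd-order prime elements, then yields $\what{Q}<1$ once $q$ exceeds a small threshold.

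The case $H_0={\rm L}_2(q)$ embedded via ${\rm Sym}^4$ has $|H| \approx q^3$. Here I will use the explicit Jordan forms. A unipotent element of order $p \geqs 5$ acts as $J_5$, so $\nu(x)=4$. An involution acts as $(-I_2,I_3)$, which gives $\nu=2$. A semisimple element $x$ acts with eigenvalues $\l^{4},\l^2,1,\l^{-2},\l^{-4}$, so $\nu(x) \geqs 3$ apart from small orders. For this case I will refer to the proof of \cite[Lemma 6.2]{BGS}, where the estimate $\what{Q}<1$ is carried out; the involution contribution is about $q^4 \cdot q^{-12}$ and is fine.

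I expect the main obstacle to be the intermediate values of $q$: those too large for convenient {\sc Magma} computation but still near the threshold. Examples are ${\rm L}_2(11)$ in ${\rm L}_5^{\e}(p)$ with $p$ around $23$--$37$, where the crude bound $a_1^2b_1^{-1}$ with $a_1 = i_2(H)$ is borderline. To handle these I will refine the estimate class by class, using the character table of $H_0$ to pin down the exact eigenvalue multiplicities. This gives sharper $b_i$, as in Table \ref{tab:sp62}, and should close the gap.
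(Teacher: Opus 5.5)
Your overall plan matches the paper's: {\sc Magma} for $q \leqs 7$, which yields the two ${\rm U}_5(2)$ rows of Table \ref{tab:new} and $\what{Q}<1$ for $q \in \{3,5,7\}$, and then Lemma \ref{l:favbound} for $q \geqs 11$, where only $H_0 = {\rm L}_2(11)$ and ${\rm U}_4(2)$ remain. The gap is in the step your generic argument rests on. You propose to prove $\nu(H) \geqs 2$ by checking that no character value has the form $\mu(4+\l)$, but this is false for $H_0 = {\rm U}_4(2)$. The degree-$5$ character takes the value $-3$ on the class \texttt{2A}, so these $45$ involutions act on $V$ as $(-I_4,I_1)$. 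Modulo scalars they are reflections, with $\nu(x)=1$, and your check would detect this rather than exclude it.

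This is exactly what the paper's proof has to handle. It takes $a_1 = 45$ and $b_1 = \frac{1}{2}q^8$ for the \texttt{2A}-elements. Every other prime order element of $H$ either has $\nu(x) \geqs 2$ or is a graph automorphism, so it satisfies $|x^G| > \frac{1}{2}Qq^{12} = b_2$. Taking $a_2 = 4q^4 > |H|$ then gives $\what{Q}(G,H) < a_1^2b_1^{-1}+a_2^2b_2^{-1} < 1$ for all $q \geqs 11$.

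Your numerics also put the difficulty in the wrong place. In dimension $5$, $\nu(x) \geqs 2$ gives $|x^G| > \frac{1}{2}q^{12}$ (consider $(-I_2,I_3)$ or $(J_2^2,J_1)$), not roughly $q^8$; the order $q^8$ is the size of the reflection classes. With the correct bound, the ${\rm L}_2(11)$ case is immediate. There $q \geqs 13$, $|H|<q^3$ and $\nu(H)=2$, so $\what{Q}<2q^{-6}$, and there is no borderline range $p \approx 23$--$37$. The case that genuinely needs class-by-class care is ${\rm U}_4(2)$ with $q \in \{11,13\}$. There your uniform estimate $|H|^2/(\frac{1}{2}Qq^8)$ exceeds $1$; on the other hand, using $q^{12}$ uniformly would be invalid because of the \texttt{2A} reflections.

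Finally, the ${\rm L}_2(q)$ subgroup acting on ${\rm Sym}^4$ does not occur. It preserves a nondegenerate quadratic form, so it violates condition (iv) of Definition \ref{sdef}; it lies in the $\C_8$-subgroup ${\rm SO}_5(q)$. In fact $q=p$ in every case with $n=5$. Also, ${\rm M}_{11}$ arises only for $q=3$, which the computation already covers.
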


\begin{proof}
Here $G_0 = {\rm L}_5^{\e}(q)$ and $q = p$ (see \cite[Tables 8.19, 8.21]{BHR}). If $q = 2$ then $\e = -$, $H_0 = {\rm L}_2(11)$ and the main theorem of \cite{BGS} gives $b(G,H) = 2$. However, using {\sc Magma} we calculate that $\what{Q}(G,H) = \frac{799}{576}$ if $G = G_0$ and $\what{Q}(G,H) = \frac{931}{576}$ if $G = G_0.2$, so these special cases are recorded in Table \ref{tab:new}. For instance, if $G = G_0.2$ then we can construct $G$ and $H$ as in  Example \ref{ex:con}(iv) and we can use the function \texttt{QHat} (see Example \ref{ex:qhat}) to compute $\what{Q}(G,H)$ precisely.

In the same way, we can use {\sc Magma} to check that $\what{Q}(G,H)<1$ when $q \in \{3,5,7\}$, so for the remainder we may assume $q \geqs 11$ and $H_0 = {\rm L}_2(11)$ or ${\rm U}_4(2)$. If $H_0 = {\rm L}_2(11)$ then $q \geqs 13$, $|H|<q^3$ and by inspecting the character table of $H$ we see that $\nu(H) = 2$, so $|x^G|>\frac{1}{2}q^{12}$ for all $x \in H$ of prime order and thus $\what{Q}(G,H) < 2q^{-6}<1$.

Finally, suppose $q \geqs 11$ and $H_0 = {\rm U}_4(2)$. If $x \in H_0$ is an involution in the $H_0$-class labelled \texttt{2A}, then $x$ acts as $(-I_4,I_1)$ on $V$, so $|x^G|>\frac{1}{2}q^{8}=b_1$ and we note that the \texttt{2A} class has size $a_1 = 45$. For all other elements $x \in H$ of prime order we have $|x^G|>\frac{1}{2}Qq^{12} = b_2$ and we note that $|H|<4q^4 = a_2$. In the usual manner, these estimates imply that 
\[
\what{Q}(G,H) < a_1^2b_1^{-1} + a_2^2b_2^{-1}<1
\]
and the result follows.
\end{proof}

\begin{lem}\label{l:t2_4}
The conclusion to Theorem \ref{t:hat} holds if $n=4$. 
\end{lem}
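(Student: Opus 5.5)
The groups to consider have $n=4$, so $G_0 = {\rm L}_4^{\e}(q)$ or ${\rm PSp}_4(q)$. The possibilities for $H$ can be read off from \cite[Tables 8.9, 8.11, 8.13, 8.14]{BHR}, remembering the exclusion $(G_0,H_0) = ({\rm L}_4(2),A_7)$ from Remark \ref{r:S2}. The plan is to go through the possible $H_0$ one at a time. Theorem \ref{gursax} requires $n \geqs 6$, so it is not available, and every bound on $\nu(x)$ will have to come from the explicit representation.

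\textbf{Step 1: small $q$.} First I would separate out the cases with $b(G,H) \geqs 3$ listed in \cite[Table 1]{BGS}. These satisfy $\what{Q}(G,H) \geqs 1$ automatically, since $\what{Q}(G,H) \geqs Q(G,H) = 1$. This covers, for example, the small unitary cases with socle ${\rm U}_4(3)$ and the cases $G_0 = {\rm Sp}_4(q)$, $q$ even, $H_0 = {}^2B_2(q)$, where $|H|^2$ is comparable to $|G|$. For every remaining case with $q$ small, say $q \leqs 9$, I would use {\sc Magma}. The groups $G$ and $H$ can be constructed as in Example \ref{ex:con}, and \texttt{QHat} then computes $\what{Q}(G,H)$ exactly. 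The expected output is that $\what{Q}(G,H)<1$ except for $({\rm PGSp}_4(7),S_7)$, $({\rm PGSp}_4(5),S_6)$ and $({\rm U}_4(5).2_1,{\rm U}_4(2).2)$. Those three exceptions have $b(G,H)=2$ and $\what{Q}>1$, so they go into Table \ref{tab:new}.

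\textbf{Step 2: generic $q$.} For larger $q$, the subgroup $H_0$ is either a fixed small group ($A_6$, $A_7$, ${\rm L}_2(7)$, ${\rm U}_4(2)$, ${\rm L}_3(4)$, $2.A_7$, \ldots, with $q=p$ or $q=p^2$) or a Lie-type group in characteristic $p$. The latter means ${\rm L}_2(q)$ for $p \geqs 5$ in ${\rm PSp}_4(q)$, or ${}^2B_2(q)$. For the fixed groups I would argue as in Lemmas \ref{l:t2_6} and \ref{l:t2_5}, using two ingredients.

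\begin{itemize}
\item From the relevant Brauer character tables, verify that $\nu(x) \geqs 2$ for every $x \in H$ of prime order. Where $\nu(x)=1$ occurs, it is for reflections such as the \texttt{2A} class in the $S_6$ or ${\rm U}_4(2).2$ cases, and those elements are isolated as a separate term. Then $|x^G| > \frac{1}{2}q^{4}$ or a similar bound holds for the generic elements.
\item Use $|H|$ bounded by an absolute constant.
\end{itemize}

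Lemma \ref{l:favbound} then gives $\what{Q}(G,H) < \sum_i a_i^2b_i^{-1} < 1$ once $q$ is beyond the {\sc Magma} range. For $H_0 = {\rm L}_2(q)$ acting on the symmetric cube, I would use the Jordan and eigenvalue structure to get $\nu(x) \geqs 2$, hence $|x^G| \gtrsim q^{6}$, together with $|H| < q^3\log q$. This gives $\what{Q} \lesssim q^{0}$ up to constants, so the estimate has to be done carefully: unipotent elements ($\approx q^2$ of them, with $|x^G|\approx q^{8}$ since they are regular) and involutions ($\approx q^2$ of them, with $|x^G|\approx q^4$) should be treated separately. That gives a bound of the form $O(q^{-2})$.

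\textbf{Step 3: the ${\rm L}_2$-type cases for moderate $q$.} I expect the main obstacle to be the ${\rm L}_2(q)$ case at the threshold, together with the infinite families ${\rm L}_2(29)$ and ${\rm L}_2(31)$ containing $A_5$, and the cases of Table \ref{tab:new} for $n=2$. These belong to Lemma \ref{l:t2_2} rather than here. Within $n=4$, the obstacle is making the crude bounds good enough for the intermediate $q$ just above the computational range, where the involution term $a_1^2/b_1$ is the binding one. My first attempt there would be exact class sizes: use $|x^G| = |G|/|C_G(x)|$ with centralisers of type ${\rm Sp}_2(q)\times{\rm Sp}_2(q)$ or ${\rm GL}_2^{\pm}(q)$, rather than the $\frac{1}{4}q^k$ bounds. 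If that is still not enough, I would extend the {\sc Magma} computation to the next few values of $q$.
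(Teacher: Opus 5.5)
Your overall route is the paper's. You discard the cases with $b(G,H)\geqs 3$ via \cite[Table 1]{BGS} and compute $\widehat{Q}(G,H)$ in {\sc Magma} for small $q$; the three exceptions you predict are exactly the $n=4$ entries of Table~\ref{tab:new}. For large $q$ you combine Lemma~\ref{l:favbound} with element counts and lower bounds on $|x^G|$. The gap is in the one family where $|H|$ grows with $q$, namely $H_0={\rm L}_2(q)<{\rm PSp}_4(q)$ with $p\geqs 5$. The paper does not redo this case; it quotes the analysis of case $(\mathcal{S}14)$ in the proof of \cite[Proposition~7.1]{BGS}. Your estimates there do not give the $O(q^{-2})$ bound you claim, for three reasons.
\begin{itemize}
\item In ${\rm PSp}_4(q)$, $\nu(x)\geqs 2$ does not imply $|x^G|\gtrsim q^6$: the image of $(-I_2,I_2)$ with non-degenerate eigenspaces has $\nu(x)=2$ but $|x^{G_0}|=\frac12q^2(q^2+1)$.
\item You assign the roughly $q^2$ involutions of $H$ class size about $q^4$, which makes their contribution of order $1$, not $O(q^{-2})$.
\item The semisimple elements of odd prime order (of order $q^3$ in number) are not treated at all. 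Lumped together via Lemma~\ref{l:favbound} with only $|x^G|\gtrsim q^6$, they also contribute a term of order $1$.
\end{itemize}

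What is missing is the embedding data for $V={\rm Sym}^3(W)$. An involution of $H_0$ lifts to $\hat{x}\in{\rm SL}_2(q)$ with eigenvalues $-i,i,-i,i$ on $e_1^3,e_1^2e_2,e_1e_2^2,e_2^3$. The invariant form pairs $e_1^3$ with $e_2^3$ and $e_1^2e_2$ with $e_1e_2^2$, so both eigenspaces are totally isotropic. Hence $\hat{x}^2=-1$, $C_{G_0}(x)$ has type ${\rm GL}_2^{\varepsilon}(q)$, and $|x^{G_0}|=\frac12q^3(q+\varepsilon)(q^2+1)$ with $q\equiv\varepsilon\pmod 4$. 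The same holds for the outer involutions of ${\rm PGL}_2(q)$, whose eigenspaces are again totally isotropic. Elements of prime order $r\geqs 5$ have four distinct eigenvalues $\alpha^{\pm3},\alpha^{\pm1}$, so they are regular semisimple with $|x^G|$ of order $q^8$. Elements of order $3$ act as $(I_2,\omega,\omega^2)$ with $|x^G|$ of order $q^6$, but there are only about $q^2$ of them. Unipotent elements act as $J_4$. With this data every term is $O(q^{-2})$.

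Separately, your use of $\nu$ in dimension $4$ is off target. ${\rm PGSp}_4(q)$ contains no reflections at all. For ${\rm U}_4(2)<{\rm L}_4^{\varepsilon}(q)$ the involutions act as $(-I_2,I_2)$, and the $\nu=1$ elements are \texttt{3A} and \texttt{3B}, acting as $(I_1,\omega I_3)$; \texttt{2A} is a reflection only on the $5$-dimensional module. The paper does not use $\nu$ here. In ${\rm L}_4^{\varepsilon}(q)$, every prime order element other than a graph automorphism has $|x^G|$ at least the transvection class size, roughly $q^6$. A bound like $\frac12q^4$ would be far too weak against $|H|$ up to about $5\cdot 10^4$. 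The $q^6$ bound suffices once $q\geqs 19$ (linear) or $q\geqs 23$ (unitary). The remaining cases are covered by {\sc Magma} and by an explicit character-table computation for ${\rm U}_4(2)<{\rm U}_4(q)$ with $q\in\{11,17\}$.
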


\begin{proof}
First assume $G_0 = {\rm L}_4(q)$. If $q = 2$ then $H_0 = A_7$ and we exclude this case since $G_0 \cong A_8$ (see Remark \ref{r:S2}(i)). In the remaining cases, we have $q = p \geqs 7$ and $H_0 = A_7$, ${\rm L}_2(7)$ or ${\rm U}_4(2)$, so $\pi(H) \subseteq \{2,3,5,7\}$. If $q = 7$ then $H_0 = {\rm U}_4(2)$ and we can use {\sc Magma} to verify the bound $\what{Q}(G,H)<1$. For example, if $G = G_0.2$ (extended by a symplectic-type graph automorphism), then $H = {\rm U}_4(2).2$ and we can construct $G$ and $H$ as follows 
{\small
\begin{verbatim}
G:=LowIndexSubgroups(AutomorphismGroupSimpleGroup("L",4,7),2)[3];
H:=MaximalSubgroups(G)[2]`subgroup;
\end{verbatim}}
\noindent This allows us to use the function \texttt{QHat} (see Example \ref{ex:qhat}) to show that  
\[
\what{Q}(G,H) = \frac{522449}{2235331}.
\]
We can also handle the cases $q \in \{11,13\}$ using {\sc Magma}, so for the remainder we may assume $q \geqs 19$ (note that $\mathcal{S}$ is empty when $q = 17$).

First observe that $i_r(H) \leqs a_r$, where
\[
a_2 = 891,\; a_3 = 800,\;  a_5 = 5184,\; a_7 = 720.
\]
More precisely, we have $i_2(H_0) \leqs 315 = a_{2,1}$ and $i_2(H \setminus H_0) \leqs 576 = a_{2,2}$. Let $x \in G$ be an element of prime order $r$. If $x$ is not an involutory graph automorphism, then  
\[
|x^G| \geqs \frac{|{\rm GL}_4(q)|}{q^5|{\rm GL}_2(q)||{\rm GL}_1(q)|} = (q^3-1)(q^2+1)(q-1) = b
\]
(minimal if $x$ is unipotent with Jordan form $(J_2,J_1^2)$), otherwise  
\[
|x^G| \geqs \frac{|G_0|}{|{\rm Sp}_4(q)|} = \frac{1}{d}q^2(q^3-1) = c,
\]
where $d = (4,q-1)$. It follows that
\begin{equation}\label{e:bdd}
\what{Q}(G,H) \leqs (a_{2,1}^2+a_3^2+a_5^2+a_7^2)b^{-1} + a_{2,2}^2c^{-1}<1
\end{equation}
for all $q \geqs 19$. 

Next assume $G_0 = {\rm U}_4(q)$. Here $q = p \geqs 3$. If $q = 3$ then $H_0 = A_7$ or ${\rm L}_3(4)$ and the main theorem of \cite{BGS} gives $b(G,H) \geqs 3$. Next assume $q = 5$. If $G = G_0$ then $H = A_7$ or ${\rm U}_4(2)$ and we use {\sc Magma} to show that $\what{Q}(G,H)<1$.  For example, if $H = {\rm U}_4(2)$ then we compute
\[
\what{Q}(G,H) = \frac{11341}{11375}.
\]
Next assume $G = G_0.2$ (extended by a graph), so $H = S_7$ or ${\rm U}_4(2).2$. If $H = S_7$ then we get $\what{Q}(G,H)<1$. On the other hand, if $H = {\rm U}_4(2).2$ then $b(G,H)=2$ by \cite[Theorem 1]{BGS}, but we compute
\[
\what{Q}(G,H) = \frac{23941}{11375}
\]
and so this case is recorded in Table \ref{tab:new}. Note that in the latter case, we can construct $G$ and $H$ as follows:
{\small
\begin{verbatim}
G:=LowIndexSubgroups(AutomorphismGroupSimpleGroup("U",4,5),2)[3];
H:=MaximalSubgroups(G)[6]`subgroup;
\end{verbatim}}

Now assume $q \geqs 11$. By arguing as above (for $G_0 = {\rm L}_4(q)$), we see that the upper bound in \eqref{e:bdd} holds, where the terms $a_{2,1}$, $a_{2,2}$, $a_3$, $a_5$ and $a_7$ are defined as before, and we have
\[
b = \frac{|{\rm GU}_4(q)|}{|{\rm GU}_3(q)||{\rm GU}_1(q)|} = q^3(q^2+1)(q-1)
\]
and $c = \frac{1}{d}q^2(q^3+1)$, where $d = (4,q+1)$. It is routine to check that this upper bound is sufficient for all $q \geqs 23$. And if $H_0 = A_7$ then by setting 
\[
a_{2,1} = 105,\; a_{2,2} = 126,\; a_3 = 350,\; a_5 = 504, \; a_7 = 720,
\]
one can check that the upper bound in \eqref{e:bdd} is good enough for all $q \geqs 11$.

So to complete the proof, we may assume $H_0 = {\rm U}_4(2)$ and $q \in \{11,17\}$. By inspecting the character table of $\widehat{H}_0 = 2.{\rm U}_4(2)$, we can determine how each conjugacy class in $H_0$ embeds in $G_0$. In this way, for $G = G_0$ we deduce that 
\[
\what{Q}(G,H) \leqs |{\rm GU}_4(q)|^{-1}\sum_{i=1}^6 a_i^2c_i < 1,
\]
where the $a_i$ and $c_i$ terms are defined as in Table \ref{tab:u4q} (in the table, we set $\bar{V} = V \otimes k$, where $k$ is the algebraic closure of $\mathbb{F}_q$, and we write $\omega$ for a primitive $r$-th root of unity in $k$).

\renewcommand{\arraystretch}{1.1}

\begin{table}
\[
\begin{array}{ccllll} \hline
i & r & x & \bar{V}{\downarrow}\,x & a_i &  c_i \\ \hline
1 & 2 & \texttt{2A},\, \texttt{2B} & (-I_2,I_2) & 45+270 & 2|{\rm GU}_2(q)|^2 \\
2 & 3 & \texttt{3A} & (I_1,\omega I_3) & 40 & |{\rm GU}_1(q)||{\rm GU}_3(q)| \\
3 & & \texttt{3B} & (I_1,\omega^2 I_3) & 40 & |{\rm GU}_1(q)||{\rm GU}_3(q)| \\
4 & & \texttt{3C} & (\omega I_2, \omega^2I_2) & 240 & |{\rm GU}_2(q)|^2 \\
5 & & \texttt{3D} & (I_2,\omega,\omega^2) & 480 & |{\rm GU}_2(q)||{\rm GU}_1(q)|^2 \\
6 & 5 & \texttt{5A} & (\omega,\omega^2,\omega^3,\omega^4) & 5184 & |{\rm GU}_1(q)|^4 \\ \hline 
\end{array}
\]
\caption{The case $G_0 = {\rm U}_4(q)$, $H_0 = {\rm U}_4(2)$, $q \in \{11,17\}$}
\label{tab:u4q}
\end{table}

\renewcommand{\arraystretch}{1}

Now assume  $G = G_0.2$, where $G_0$ is extended by an involutory graph automorphism. As above, it is straightforward to calculate the contribution to $\what{Q}(G,H)$ from the prime order elements in $G_0$, keeping in mind that the \texttt{3A} and \texttt{3B} classes in $H_0 = {\rm U}_4(2)$ are fused in $H = H_0.2$. So it just remains to include the contribution from involutory graph automorphisms, noting that there are two such classes in $H$. By arguing as in the final paragraph of the proof of \cite[Lemma 2.23]{FPR4}, we see that if $x \in H$ is a symplectic-type graph automorphism, then $C_{G_0}(x)$ is also of type ${\rm Sp}_4(q)$. And similarly, if $x$ is a non-symplectic graph automorphism of $H_0$ then $x$ embeds in $G$ as a non-symplectic graph automorphism of $G_0$. Therefore, the contribution to $\what{Q}(G,H)$ from involutory graph automorphisms is at most 
\[
|{\rm U}_4(q)|^{-1} \left(r_1^2s_1 + r_2^2s_2\right) <q^{-1},
\]
where $r_1 = 36$, $r_2 = 540$, $s_1 = |{\rm Sp}_4(q)|$ and $s_2 = |{\rm SO}_4^{-}(q)|$. And by combining this estimate with the contribution from the prime order elements in $G_0$, we conclude that $\what{Q}(G,H)<1$, as required.

Finally, let us assume $G_0 = {\rm PSp}_4(q)$. Suppose $q= 2^f$ is even and recall that we may assume $f \geqs 2$ (see \eqref{e:groups}). In fact, as noted in Remark \ref{r:S}(i), we may assume $f \geqs 3$ is odd and $H_0 = {}^2B_2(q)$. Then the main theorem of \cite{BGS} gives $b(G,H) = 3$.

Now assume $q$ is odd. The possibilities for $H_0$ are listed in \cite[Table 8.13]{BHR} and we see that $q \geqs 5$. If $q = 5$, then $H_0 = A_6$ and using {\sc Magma} we compute $\what{Q}(G,H) = \frac{331}{650}$ when $G = G_0$ and $\what{Q}(G,H) = \frac{841}{650}$ when $G = G_0.2$, so the latter case is included in Table \ref{tab:new} since $b(G,H) = 2$. Similarly, if $q = 7$ and  $H_0 = A_7$ then $\what{Q}(G,H) = \frac{47}{49}$ when $G = G_0$ and $\what{Q}(G,H) = \frac{293}{196}$ when $G = G_0.2$, which means that the latter case appears in Table \ref{tab:new}. And if $G = G_0.2$ and $H = {\rm PGL}_2(7)$, then $\what{Q}(G,H) = \frac{53}{1225}$.

For the remainder, we may assume $q \geqs 9$, so either $q=p$ and $H_0 = A_6$, or $p \geqs 5$ and $H_0 = {\rm L}_2(q)$. In particular, we may assume $q \geqs 11$. The cases $q \in \{11,13\}$ can be checked with {\sc Magma}, so let us assume $q \geqs 17$. For $H_0 = {\rm L}_2(q)$, we refer the reader to the analysis of case $(\mathcal{S}14)$ in the proof of \cite[Proposition 7.1]{BGS}, where the bound $\what{Q}(G,H)<1$ is established. Finally, suppose $H_0 = A_6$. We have $i_2(H) \leqs 75 = a_1$ and $|x^G| \geqs \frac{1}{2}q^2(q^2-1) = b_1$ for every involution $x \in H$. And if $x \in H$ has odd prime order, then $|x^G| \geqs q^3(q-1)(q^2+1) = b_2$ and we note that there are $224=a_2$ such elements in $H$. Therefore,
\[
\what{Q}(G,H) \leqs a_1^2b_1^{-1} + a_2^2b_2^{-1} < 1
\]
for all $q \geqs 17$.
\end{proof}

\begin{lem}\label{l:t2_3}
The conclusion to Theorem \ref{t:hat} holds if $n=3$. 
\end{lem}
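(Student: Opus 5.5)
We work through the possibilities for $(G_0,H_0)$ with $n=3$, so $G_0 = {\rm L}_3^{\e}(q)$, reading off the maximal subgroups in $\mathcal{S}$ from \cite[Tables 8.3--8.6]{BHR}. Up to the conditions on $q$ recorded there, the candidates are $H_0 = {\rm L}_2(7)$, $A_6$, $A_7$ (for $\e=-$, $q=5$), ${\rm L}_3(q_0)$-type subfield-like exceptions excluded by Definition \ref{sdef}(iii), and the orthogonal-type subgroups $H_0 = {\rm L}_2(q) \cong \O_3(q)$ (with $q$ odd). For the latter family, $\nu(H)$ is small. We would appeal to the analysis in the proof of \cite[Proposition 7.1]{BGS}, which handles the orthogonal-type $\mathcal{S}$-subgroups in dimension $3$ with $q \geqs 5$. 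We would check that the fixed point ratio bounds there in fact give $\what{Q}(G,H)<1$ whenever $b(G,H)=2$, in the same way as in the earlier lemmas.

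For the finite list of almost simple $H_0 \in \{{\rm L}_2(7), A_6, A_7\}$, note that $|H| \leqs 2|A_7|$ in all cases, so $|H|$ is bounded independently of $q$. For every nontrivial prime order $x \in G$ we have the crude bound $|x^G| \geqs b$, where
\[
b = \min\left\{\frac{|{\rm GL}_3^{\e}(q)|}{q^{3}|{\rm GL}_1^{\e}(q)|^2},\ \frac{|G_0|}{|{\rm SO}_3(q)|}\right\},
\]
so $b$ is roughly of order $q^4$, minimal for transvections or graph automorphisms. Splitting the elements of $H$ into involutions in $H_0$, involutions in $H\setminus H_0$ and odd order elements, with counts $a_i$ read off from the character tables, Lemma \ref{l:favbound} gives $\what{Q}(G,H) < \sum a_i^2 b_i^{-1}$. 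This should be less than $1$ once $q$ exceeds a modest explicit bound (around $q \geqs 19$, say). To sharpen the bound, we would note that an element $x \in H$ of order $r \ne p$ with $r>2$ is regular semisimple or close to it, so $|x^G|$ is of order $q^6$. Only the involutions have $|x^G|$ of order $q^4$, and these are few ($21$, $45$, $105$ or so).

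The remaining small values of $q$ would be handled in {\sc Magma}. We would construct $G$ and $H$ as in Example \ref{ex:con} and compute $\what{Q}(G,H)$ exactly with \texttt{QHat}, flagging any case with $\what{Q}(G,H)\geqs 1$. Such cases land in \cite[Table 1]{BGS} when $b(G,H)\geqs 3$ (for example ${\rm U}_3(5)$ with $A_7$, ${\rm L}_3(4)$-type cases). Otherwise they would have to appear in Table \ref{tab:new}. Since Table \ref{tab:new} contains no $n=3$ entries, we expect all base-two cases to satisfy $\what{Q}<1$.

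The main obstacle is the orthogonal family $H_0={\rm L}_2(q)$. There $|H|\approx q^3$ and involutions have $|x^G|\approx q^4$, so naive counting fails. We would need the precise class fusion: unipotent elements of $H_0$ are regular unipotent in $G_0$ (Jordan block $J_3$), with $|x^G|\approx q^6$. Semisimple elements of $H_0$ have eigenvalues $(\l,1,\l^{-1})$, so they are regular in $G_0$ unless $\l=-1$. The involutions, about $q^2$ of them, each have $|x^G|\approx q^4$, contributing roughly $q^4/q^4$. This term is borderline, so it must be computed exactly, using $|x^G\cap H|=|H_0:D_{2(q\pm1)}|$ and $|x^G|=|{\rm GL}_3^{\e}(q)|/|{\rm GL}_2^{\e}(q)||{\rm GL}_1^{\e}(q)|$. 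The ratio is then about $\frac{1}{4}$ rather than $1$, which is presumably what \cite{BGS} exploits.
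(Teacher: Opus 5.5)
There is a genuine error in the proposal: the family you single out as the main obstacle is not in $\mathcal{S}$. A subgroup with socle ${\rm L}_2(q)\cong\Omega_3(q)$ acting on the $3$-dimensional adjoint module fixes a nondegenerate quadratic form, so Definition \ref{sdef}(iv) excludes it from $\mathcal{S}$. For ${\rm U}_3(q)$ it is also excluded by (iii), since it is realised over $\mathbb{F}_q$. These are the geometric subgroups of type ${\rm O}_3(q)$, in $\mathcal{C}_8$ for ${\rm L}_3(q)$ and in $\mathcal{C}_5$ for ${\rm U}_3(q)$. Read correctly, \cite{BHR} gives only the socles ${\rm L}_2(7)$ and $A_6$, plus $A_7$ for ${\rm U}_3(5)$. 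So $\pi(H)\subseteq\{2,3,5,7\}$ and $|H|$ is bounded in every case; there is no borderline family.

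Your handling of that phantom family would also have failed on its own terms. \cite[Proposition 7.1]{BGS} does not treat it: the case $(\mathcal{S}14)$ there is ${\rm PSp}_4(q)$ with $H_0={\rm L}_2(q)$ on the symmetric cube. More seriously, the estimate ``about $\frac14$'' ignores that the involutions of ${\rm SO}_3(q)\cong{\rm PGL}_2(q)$ all fuse into the single $G$-class of type $(-I_2,I_1)$. Hence $|x^G\cap H|$ is about $q^2$, not $q^2/2$, and the involution term is about $q^2/(q^2+\e q+1)$. That is essentially $1$, and it exceeds $1$ when $\e=-$.

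Once that material is deleted, what remains of your plan is essentially the paper's proof. The cases with $b(G,H)\geqs 3$ (namely ${\rm L}_3(4)$, ${\rm U}_3(3)$ and ${\rm U}_3(5)$) come from \cite[Table 1]{BGS}. Elsewhere one bounds $i_r(H)$ for $r\in\{2,3,5,7\}$, takes a uniform lower bound on $|x^G|$, and applies Lemma \ref{l:favbound}. The finitely many leftover $q$ are checked in {\sc Magma}: $q\in\{11,19,23,29,31\}$ for ${\rm L}_3(q)$ and $q\in\{11,13\}$ for ${\rm U}_3(q)$.

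One detail needs fixing. Your $b$ is not a valid lower bound when $G_0={\rm L}_3(q)$ with $q=p^2$, which is where $H_0=A_6$ occurs. There $H$ can contain involutory field or graph-field automorphisms, and the latter have $|x^G|$ as small as $|G_0|/|{\rm PGU}_3(q^{1/2})|=\frac13q^{3/2}(q^{3/2}-1)(q+1)$. The paper uses this value, and the crude bound then works for all $q\geqs 49$.

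Conversely, for $q=p\geqs 11$ transvections are irrelevant, since $p\notin\pi(H)$. The minimal relevant class is then that of the involutions $(-I_2,I_1)$, of size $q^2(q^2+\e q+1)$.
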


\begin{proof}
First assume $G_0 = {\rm L}_3(q)$, in which case $H_0 = {\rm L}_2(7)$ or $A_6$ and $\pi(H) \subseteq \{2,3,5,7\}$. If $q =4$ then $H_0 = A_6$ and the main theorem of \cite{BGS} states that $b(G,H) = 3$. In fact, this is incorrect (as noted in Remark \ref{r:main}(d)); we have $b(G,H) = 3+\a$, where $\a = 1$ if $G = G_0.2^2$ and $H = A_6.2^2$, otherwise $\a=0$. For the remainder, we may assume $q \geqs 11$ (see \cite[Table 8.4]{BHR}).
 
For $r \in \{2,3,5,7\}$ we compute $i_r(H) \leqs a_r$, where $a_2 = 45$, $a_3 = 80$, $a_5 = 144$ and $a_7 = 48$. If $q=p$ then 
$|x^G| \geqs q^2(q^2+q+1) = b$ for all $x \in G$ of prime order and it follows that 
\[
\widehat{Q}(G,H) \leqs (a_2^2+a_3^2+a_5^2+a_7^2)b^{-1},
\]
which is less than $1$ for all $q \geqs 37$. This leaves the cases $q \in \{11,19,23,29,31\}$, which can be checked directly using {\sc Magma}. Now assume $q = p^2$ with $p \geqs 7$. Since $H \leqs A_6.2^2$ we see that $r \in \{2,3,5\}$ and $i_r(H) \leqs a_r$, where $a_2 = 111$, $a_3 = 80$ and $a_5 = 144$. In addition, we note that
\[
|x^G| \geqs \frac{|G_0|}{|{\rm PGU}_3(q^{1/2})|} = \frac{1}{3}q^{3/2}(q^{3/2}-1)(q+1) = b
\]
(minimal if $x \in G$ is an involutory graph-field automorphism) and thus
\[
\widehat{Q}(G,H) \leqs (a_2^2+a_3^2+a_5^2)b^{-1} < 1
\]
for all $q \geqs 49$.

Now assume $G_0 = {\rm U}_3(q)$, in which case $q=p \geqs 3$. If $q = 3$ then there is a unique class of subgroups in $\mathcal{S}$ (with $H_0 = {\rm L}_2(7)$) and the main theorem of \cite{BGS} gives $b(G,H)= 3+\a$, where $\a = 1$ if $G = G_0.2$, otherwise $\a = 0$. Similarly, if $q = 5$ then $b(G,H) \geqs 3$. Now assume $q \geqs 11$, so $H_0 = {\rm L}_2(7)$ or $A_6$ and $\pi(H) \subseteq \{2,3,5,7\}$ (note that $\mathcal{S}$ is empty if $q = 7$). We calculate that $i_r(H) \leqs a_r$, where
\[
a_2 = 81, \, a_3 = 80, \, a_5 = 144,\, a_7 = 48.
\]
Since $|x^G| \geqs q^2(q^2-q+1)=b$ for all $x \in G$ of prime order, we deduce that
\[
\what{Q}(G,H) \leqs (a_2^2+a_3^2+a_5^2+a_7^2)b^{-1} < 1
\]
for all $q \geqs 17$. Finally, the cases $q \in \{11,13\}$ can be checked using {\sc Magma}. 
\end{proof}

\begin{lem}\label{l:t2_2}
The conclusion to Theorem \ref{t:hat} holds if $n=2$. 
\end{lem}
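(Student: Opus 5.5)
For $n=2$ we have $G_0 = {\rm L}_2(q)$. The plan is to read off the possibilities for $H$ from \cite[Tables 8.1, 8.2]{BHR} and to treat the small fields separately. Recall that we have excluded $(G_0,H_0) = ({\rm L}_2(9),A_5)$ (see Remark \ref{r:S2}(ii)). The members of $\mathcal{S}$ then have socle $H_0 = A_5$, and either $q = p \equiv \pm 1 \imod{10}$ or $q = p^2$ with $p \equiv \pm 3 \imod{10}$. We may ignore subgroups with socle ${\rm L}_2(q_0)$, since these lie in $\mathcal{C}_5$. In every case $H \leqs S_5$, so $|H| \leqs 120$ and $\pi(H) \subseteq \{2,3,5\}$. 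Moreover $i_r(H) \leqs a_r$ with
\[
a_2 = 25,\; a_3 = 20,\; a_5 = 24,
\]
where $a_2 = 15+10$ accounts for the transpositions when $H = S_5$.

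Next I would bound $|x^G|$ from below for each $x \in G$ of prime order $r \in \{2,3,5\}$. If $x \in {\rm PGL}_2(q)$, then either $x$ is semisimple with $|x^G| \geqs q(q-1)/2$, or $r = p = 5$ and $x$ is unipotent with $|x^G| \geqs (q^2-1)/2$. So $|x^G| \geqs \frac{1}{2}q(q-1) = b$ in all these cases. If $q = p^2$, then $H$ may contain involutory field automorphisms, and for these $|x^G| = |{\rm PGL}_2(q)|/|{\rm PGL}_2(p)| \geqs q^{1/2}(q+1)$. Feeding these bounds into Lemma \ref{l:favbound} class by class gives
\[
\what{Q}(G,H) \leqs (a_2^2+a_3^2+a_5^2)\,b^{-1} = 1601\, b^{-1},
\]
where the field-automorphism contribution is absorbed in the same way. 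This is less than $1$ once $q(q-1) > 3202$, that is, for $q \geqs 59$. For $q = p^2$, with the sharper bound for field automorphisms, the same estimate works for $q \geqs 49$.

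That leaves a finite list of small cases: $q \in \{11,19,29,31,41\}$ for $q = p$, and $q = 9$ is excluded. I would handle each of these directly in {\sc Magma}, constructing $G$ and $H$ (for example via \texttt{MaximalSubgroups(G)} with the appropriate order) and running \texttt{QHat} from Example \ref{ex:qhat}. I expect $\what{Q}(G,H) < 1$ in every case except $G = {\rm L}_2(29)$ and ${\rm L}_2(31)$ with $H = A_5$. In those two cases $b(G,H) = 2$ by \cite{BGS}, and the computed values $\what{Q} \approx 1.36$ and $1.18$ place them in Table \ref{tab:new}. For $q = 11$ and $19$ we have $b(G,H) \geqs 3$ by \cite[Theorem 1]{BGS}, so these belong to part (i) of Theorem \ref{t:hat}.

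The main obstacle is the boundary region $q \in \{29,31,41\}$. Here the crude estimate fails, so the exact values from {\sc Magma} are essential. One also has to check carefully which fusion of $H$-classes in $G$ occurs, and to separate $G = G_0$ from $G = {\rm PGL}_2(q)$, because $A_5$ is maximal only in the former and $S_5$ does not arise as a maximal subgroup for $q = p \equiv \pm 1 \imod{10}$ in ${\rm PGL}_2(q)$. These checks are needed to confirm that exactly the two table entries arise.
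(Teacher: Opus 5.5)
Your proposal has a genuine gap in the case $q=p^2$, and the case $q=49$ in particular is not handled. The overall route is the paper's: reduce via \cite{BHR}, quote \cite{BGS} for $q\in\{11,19\}$, compute the small cases, and use a crude bound otherwise. When $G=G_0.\langle\phi\rangle$ and $H=S_5$, the ten transpositions of $H$ are involutory field automorphisms with $C_{G_0}(x)={\rm PGL}_2(q^{1/2})$. Hence
\[
|x^G|=|G_0|/|{\rm PGL}_2(q^{1/2})|=\tfrac{1}{2}q^{1/2}(q+1).
\]
Your value $|{\rm PGL}_2(q)|/|{\rm PGL}_2(p)|$ is the class size in ${\rm P\Gamma L}_2(q)$; that class splits into two $G$-classes. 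More importantly, this class size is of order $q^{3/2}$, well below $b=\frac{1}{2}q(q-1)$, so it is a weaker bound than the one for inner elements, not a sharper one. These ten elements therefore cannot be absorbed into $1601\,b^{-1}$. They need their own term $10^2\cdot 2/(q^{1/2}(q+1))$, which is exactly the $(a_3,b_3)$ term in the paper. With that term your estimate is fine for $q=p\geqs 59$ (where $G=G_0$) and for $q=p^2\geqs 169$. It fails at $q=49$, which is missing from your list of residual cases; the paper lists $q=49$ (with $41,59,61$) among the cases it settles by direct computation.

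In fact no upper-bound estimate can succeed at $q=49$. Take $G={\rm L}_2(49).\langle\phi\rangle$ and $H=S_5$, so $|G:H|=980$. The relevant $G$-classes, with the number of elements of $H$ in each, are:
\begin{itemize}
\item field involutions, class size $175$, containing the $10$ transpositions;
\item involutions of $G_0$, class size $1225$, containing $15$ elements of $H$;
\item elements of order $3$, class size $2450$, containing $20$;
\item elements of order $5$, class size $4704$ (the two $G_0$-classes are fused by $\phi$), containing $24$.
\end{itemize}
Therefore
\[
\what{Q}(G,H)=\frac{10^2}{175}+\frac{15^2}{1225}+\frac{20^2}{2450}+\frac{24^2}{4704}=\frac{28+9+8+6}{49}=\frac{51}{49}>1.
\]
On the other hand $b(G,H)=2$: at most $10\cdot 56+15\cdot 12+10\cdot 8+6\cdot 5=850<980$ points of $G/H$ are fixed by a nontrivial element of $H$. (For $G=G_0$ the same computation gives $\frac{23}{49}<1$.)

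So your argument cannot establish that exactly ${\rm L}_2(29)$ and ${\rm L}_2(31)$ occur. This pair appears to be a further case with $b(G,H)=2$ and $\what{Q}(G,H)\geqs 1$ that Table \ref{tab:new} does not record either. It does not affect Theorem \ref{t:main}. The two classes of $S_5$ have their transpositions in different $G$-classes of field automorphisms, so the mixed pair has $\what{Q}=\frac{23}{49}<1$.
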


\begin{proof}
Here $G_0 = {\rm L}_2(q)$, $H_0=A_5$ and we have $q \geqs 9$. In fact, we may assume $q \geqs 11$ since ${\rm L}_2(9) \cong A_6$ (see Remark \ref{r:S2}(ii)). If $q \in \{11,19\}$, then the main theorem of \cite{BGS} gives $b(G,H) = 3$, so we may assume $q \geqs 29$ (see \cite[Table 8.2]{BHR}). For $q = 29$ we have $G = G_0$, $H = H_0$ and we compute $\what{Q}(G,H) = \frac{277}{203}$. Similarly, for $q = 31$ we get $\what{Q}(G,H) = \frac{73}{62}$. In the latter two cases, we have $b(G,H) = 2$ and so these special cases are recorded in Table \ref{tab:new}.

We claim that $\what{Q}(G,H)<1$ for all $q \geqs 41$. If $x \in H$ has odd prime order then $|x^G| \geqs q(q-1) = b_1$ and we note that there are $44=a_1$ such elements in $H$. If $x \in H_0$ is an involution, then $|x^G| \geqs \frac{1}{2}q(q-1) = b_2$ and we have $i_2(H_0) = 15=a_2$. Similarly, if $x \in H \setminus H_0$ is an involution, then $q = q_0^2$, 
\[
|x^G| \geqs \frac{|G_0|}{|{\rm PGL}_2(q^{1/2})|} = \frac{1}{2}q^{1/2}(q+1)=b_3
\]
and we note that $i_2(H\setminus H_0) = 10 = a_3$. In view of Lemma \ref{l:favbound}, these estimates imply that
\[
\what{Q}(G,H) \leqs \sum_{i=1}^3 a_i^2b_i^{-1}
\]
and one checks that this upper bound is less than $1$ for all $q \geqs 71$. This leaves us with the cases $q \in \{41,49,59,61\}$, which we can handle using {\sc Magma}.
\end{proof}

\vs

This completes the proof of Theorem \ref{t:hat}.

\section{Proof of Theorem \ref{t:main}}\label{s:main}

We are now ready to prove Theorem \ref{t:main}. Let $\tau = (H_1,H_2)$ be a pair of subgroups in $\mathcal{S}$. By Lemma \ref{l:key}, we see that $\tau$ is regular if $\what{Q}(G,H_i)<1$ for $i=1,2$. This means that the proof of Theorem \ref{t:main} is immediately reduced to the groups $G$ with a maximal subgroup $H \in \mathcal{S}$ with $\what{Q}(G,H) \geqs 1$. The cases with $b(G,H) \geqs 3$ are recorded in \cite[Table 1]{BGS}, while those with $b(G,H) = 2$ and $\what{Q}(G,H) \geqs 1$ are determined by Theorem \ref{t:hat}. Therefore, we may assume $G$ is one of the groups recorded in \cite[Table 1]{BGS} or Table \ref{tab:new}, and we consider each possibility in turn. As usual, we write $n$ for the  dimension of the natural module for $G_0$ and we continue to assume (as we may) that $G_0$ is one of the groups in \eqref{e:groups}.

\begin{lem}\label{l:n9}
The conclusion to Theorem \ref{t:main} holds if $n \geqs 9$.
\end{lem}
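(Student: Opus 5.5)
The argument will run through the reduction set out at the start of this section. Let $\tau=(H_1,\ldots,H_t)$ with $t \geqs 2$ and each $H_i \in \mathcal{S}$. By Lemma \ref{l:key}, $\tau$ is regular as soon as $\what{Q}(G,H_i,t)<1$ for every $i$. Fixed point ratios are at most $1$, so $\what{Q}(G,H_i,t) \leqs \what{Q}(G,H_i)$ for all $t \geqs 2$. Hence it suffices that $\what{Q}(G,H_i)<1$ for each $i$.

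First I would combine Theorem \ref{t:hat} with \cite[Table 1]{BGS} to list, for $n \geqs 9$, every group $G$ that has a maximal subgroup $H \in \mathcal{S}$ with $\what{Q}(G,H) \geqs 1$. From Table \ref{tab:new} the cases with $b(G,H)=2$ are only $(\O_{14}^{+}(2),A_{16})$ and $({\rm Sp}_{12}(2),S_{14})$. From \cite[Table 1]{BGS} and the proofs of Lemmas \ref{l:t2_14}--\ref{l:t2_9}, the cases with $b(G,H)\geqs 3$ are $(G,H)=({\rm O}_{14}^{+}(2),S_{16})$, $(\O_{12}^{-}(2)\text{ or }{\rm O}_{12}^{-}(2),A_{13}\text{ or }S_{13})$ and $(\O_{10}^{-}(2)\text{ or }{\rm O}_{10}^{-}(2),A_{12}\text{ or }S_{12})$. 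In each of these groups I would then list all conjugacy classes of maximal subgroups in $\mathcal{S}$, using \cite{BHR,Schroder}. For $G_0=\O_{14}^{+}(2)$ only the $A_{16}$-type class should occur. For ${\rm Sp}_{12}(2)$ the classes are $S_{14}$ and ${\rm L}_2(25).2$. For $\O_{12}^{-}(2)$ they are $A_{13}$ and ${\rm L}_3(3)$. For $\O_{10}^{-}(2)$ they are $A_{12}$ and ${\rm M}_{12}$, the latter only for $G={\rm O}_{10}^{-}(2)$. For $n \geqs 15$ nothing survives, which gives Corollary \ref{c:main} at once.

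Next, for each surviving $G$, I would treat every tuple $\tau$ that contains at least one of the bad subgroups. If $\tau$ contains a good subgroup $K$, I would try to prove regularity directly with \texttt{RandomReg} (Example \ref{ex:random}). An alternative is to bound $\what{Q}(G,\tau)$ by computing the relevant fusion data with \texttt{QHat}-style code and applying the product formula in Lemma \ref{l:fpr} to the mixed tuple. Conjugate tuples $(H,\ldots,H)$ are settled by the base sizes already known from \cite{BGS}. For example, $(S_{16},S_{16})$ in ${\rm O}_{14}^{+}(2)$ is non-regular and $(S_{16},S_{16},S_{16})$ is regular. Likewise $b=3$ or $4$ for $A_{12}$ and $S_{12}$ in dimension $10$, and $b=3$ for $A_{13}$ in dimension $12$. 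These base sizes feed the entries of Tables \ref{tab:3} and \ref{tab:2}. Mixed pairs such as $(S_{12},{\rm M}_{12}.2)$ in ${\rm O}_{10}^{-}(2)$ and $(A_{13},{\rm L}_3(3))$ in $\O_{12}^{-}(2)$ will be decided by \texttt{RandomReg} when they are regular. Non-regularity will be proved by the index test $|G|>\prod|G:H_i|$ or by \texttt{RegOrbits} (Example \ref{ex:reg}). Any triple or larger tuple that contains a regular pair is automatically regular, which cuts down the work for $t\geqs 3$.

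The main obstacle will be the computations in the larger groups. For ${\rm O}_{14}^{+}(2)$ the index $|G:H|\approx 1.6\times 10^{14}$ makes \texttt{RegOrbits} infeasible. There I would rely on \cite{BGS} for the conjugate pair and on random search for any other tuple. For $\O_{12}^{-}(2)$ and $\O_{10}^{-}(2)$ with mixed pairs that turn out to be non-regular, I would first try the index inequality. If that fails, I would run \texttt{RegOrbits} with the smaller subgroup acting on the cosets of the larger one, where the degree is manageable. Recording the outcomes then gives exactly the $n\geqs 9$ rows of Tables \ref{tab:3} and \ref{tab:2}, together with regularity of every tuple with $t\geqs 4$ in these cases.
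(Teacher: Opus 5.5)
Your overall strategy is the one the paper uses. Lemma \ref{l:key}, Theorem \ref{t:hat} and \cite[Table 1]{BGS} reduce the problem to the socles $\O_{14}^{+}(2)$, ${\rm Sp}_{12}(2)$, $\O_{12}^{-}(2)$ and $\O_{10}^{-}(2)$. After that:
\begin{itemize}
\item conjugate tuples are read off from the base sizes in \cite{BGS};
\item the pair $(S_{12},{\rm M}_{12}.2)$ is shown non-regular by the order test $|H||K|>|G|$;
\item everything else is settled by random search.
\end{itemize}

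There is, however, a concrete error in your list of classes in $\mathcal{S}$, and it leaves some tuples unchecked. You say that for $G_0=\O_{14}^{+}(2)$ only the $A_{16}$-type class occurs. In fact both $G_0$ and ${\rm O}_{14}^{+}(2)$ also have a unique class of maximal subgroups in $\mathcal{S}$ with socle ${\rm L}_2(13)$. Write $L$ for such a subgroup. As written, your plan never examines the mixed pair $(A_{16},L)$ in $G_0$, nor the pair $(S_{16},L)$ in ${\rm O}_{14}^{+}(2)$, and hence not the triples built from the latter.

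These pairs cannot be dispatched by Lemma \ref{l:key}, because $\what{Q}(G,H)\geqs 1$ for the $A_{16}$ or $S_{16}$ component. They need a separate verification. The paper finds them regular by random search, which is why ${\rm O}_{14}^{+}(2)$ contributes only $(S_{16},S_{16})$ to Table \ref{tab:2}.

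A smaller point of the same kind: for $\O_{12}^{-}(2)$ there are two classes of subgroups ${\rm L}_3(3).2$, and each must be paired separately with $A_{13}$ (respectively $S_{13}$ in ${\rm O}_{12}^{-}(2)$). Once the enumeration is corrected in these two places, your argument goes through exactly as in the paper.
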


\begin{proof}
By inspecting \cite[Table 1]{BGS} and Table \ref{tab:new}, we may assume that $G_0$ is one of the following:
\[
\O_{14}^{+}(2), \, {\rm Sp}_{12}(2), \, \O_{12}^{-}(2), \, \O_{10}^{-}(2).
\]

First assume $G_0 = \O_{14}^{+}(2)$. Here we can use the {\sc Magma} function \texttt{ClassicalMaximals} to construct a set of representatives of the conjugacy classes of maximal subgroups in $\mathcal{S}$ (see Example \ref{ex:con}(i)). Both $G_0$ and $G_0.2$ have unique classes of subgroups with socle $H_0 = A_{16}$ and ${\rm L}_2(13)$. For $G = G_0$ we can use random search (see Example \ref{ex:random}) to show that every pair of subgroups in $\mathcal{S}$ is regular (that is to say, if $\tau = (H_1,H_2)$ is such a pair, then we can find an element $x \in G$ such that $H_1 \cap H_2^x = 1$). In the same way, we find that every triple of subgroups in $\mathcal{S}$ is regular when $G = G_0.2 = {\rm O}_{14}^{+}(2)$. And every pair is regular apart from $\tau = (H,H)$ with $H = S_{16}$, where the main theorem of \cite{BGS} gives $b(G,H) = 3$. So the latter case is recorded in Table \ref{tab:2}.

For $G = {\rm Sp}_{12}(2)$ we can use {\sc Magma} and random search to check that every pair of subgroups in $\mathcal{S}$ is regular. 

Next assume $G_0 = \O_{12}^{-}(2)$. If $G = G_0$, then there are three classes of subgroups in $\mathcal{S}$; one class of subgroups isomorphic to $A_{13}$, and two classes of subgroups isomorphic to ${\rm L}_3(3).2$. By \cite[Theorem 1]{BGS} we know that $(A_{13},A_{13})$ is non-regular, and with the aid of {\sc Magma}, using random search, it is easy to verify that this is the only non-regular pair. Similarly, $(S_{13},S_{13})$ is the only non-regular pair for $G = {\rm O}_{12}^{-}(2)$. In particular, in both cases we find that all triples are regular.

Finally, suppose $G_0 = \O_{10}^{-}(2)$. If $G = G_0$ then there is a unique class of subgroups in $\mathcal{S}$, which are isomorphic to $A_{12}$. By \cite[Theorem 1]{BGS}, we see that $(H,H,H,H)$ is regular, but $(H,H,H)$ and $(H,H)$ are non-regular. Now assume $G = {\rm O}_{10}^{-}(2)$. Here there are two classes of subgroups in $\mathcal{S}$, which are isomorphic to $S_{12}$ and ${\rm M}_{12}.2$, respectively. Let $H$ and $K$ be representatives of these two classes. As in the previous case, $(H,H,H,H)$ is regular, but $(H,H,H)$ and $(H,H)$ are non-regular. In addition, $(K,K)$ is regular since $b(G,K) = 2$, and we note that $(H,K)$ is non-regular since $|H||K|>|G|$. Finally, we can use {\sc Magma}, with random search, to verify that the triple $(H,H,K)$ is regular (see Example \ref{ex:con}(ii) for the construction of $K$ as a subgroup of $G$). 
\end{proof}

\begin{lem}\label{l:n8}
The conclusion to Theorem \ref{t:main} holds if $n = 8$.
\end{lem}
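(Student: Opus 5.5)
The plan is the same as in Lemma \ref{l:n9}. By Lemma \ref{l:key} and Theorem \ref{t:hat}, a tuple of subgroups in $\mathcal{S}$ can only be non-regular if one of its components $H$ has $\what{Q}(G,H) \geqs 1$. So we only need to look at the groups with $n=8$ that appear in \cite[Table 1]{BGS} or in Table \ref{tab:new}. Inspecting these tables, the candidates are $G_0 = {\rm Sp}_8(2)$ with $H = S_{10}$, and $G_0 = {\rm P\O}_8^{+}(q)$ with $q \in \{2,3\}$ (where $H_0 = A_9$ or $\O_8^{+}(2)$ respectively) or with $G = {\rm P\O}_8^{+}(5).S_3$ and $H = \O_8^{+}(2).S_3$.

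For each of these groups $G$ I would first use {\sc Magma} to list representatives of all conjugacy classes of maximal subgroups in $\mathcal{S}$, including those with $\what{Q}(G,K)<1$, since they can still occur in a mixed tuple alongside a bad $H$. This requires running over the relevant almost simple extensions: $G_0.X$ with $X \leqs S_3$, or $X \leqs {\rm Out}(G_0) = S_4$ when $q=3$. The constructions follow Example \ref{ex:con}(v), using \texttt{LowIndexSubgroups} of \texttt{AutomorphismGroupSimpleGroup}, and \texttt{ClassicalMaximals} for ${\rm Sp}_8(2)$.

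Next, for every tuple $\tau$ of length $t \in \{2,3,4\}$ with components from this list, up to reordering, I would decide regularity directly. Regularity follows from a successful \texttt{RandomReg} search (Example \ref{ex:random}). Non-regularity follows either from the order bound $|G| > \prod_i |G:H_i|$ or from \texttt{RegOrbits} returning $0$ (Example \ref{ex:reg}). The indices here are small (for example $3360$ for $q=3$, and $960$ or $120$ for $A_9$ in $\O_8^{+}(2)$), so \texttt{RegOrbits} is feasible. I would also use monotonicity: if a triple is non-regular then so are all its sub-pairs, and if a pair is regular then so is every tuple containing it. With this, only the minimal non-regular and maximal regular tuples need to be checked. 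Known base sizes from \cite{BGS} (for instance $b(G,H)=4$ for $H_0=A_9$, and $b=3$ for $S_{10} < {\rm Sp}_8(2)$) fix the conjugate tuples immediately. The resulting non-regular tuples are then recorded in Tables \ref{tab:3} and \ref{tab:2}, and 5-tuples are covered by \cite{A}, or by the regularity of all 4-tuples found here.

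I expect the main obstacle to be ${\rm P\O}_8^{+}(5).S_3$. There $|G:H|$ is about $10^{11}$, so \texttt{RegOrbits} is not available and permutation representations are large. Fortunately $b(G,H)=2$, so the conjugate pair is regular. The only other classes of $\mathcal{S}$-subgroups that survive in $G_0.S_3$ need to be determined (the ${\rm L}_3^{\e}(5)$-type subgroups have $\what{Q}<1$). I would show every mixed pair is regular by random search inside a permutation representation of degree $19500$ on singular points, or on a suitable coset space, as in Example \ref{ex:con}(iii). If that proves too slow, I would try to bound $\what{Q}(G,\tau)$ directly using the exact fixed point ratios computed in Lemma \ref{l:t2_82} and Table \ref{tab:aibi}. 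A secondary difficulty is keeping track of the several $G$-classes of isomorphic subgroups (for example the two classes of ${\rm O}_8^{+}(2)$ in $G_0.2$ for $q=3$, and their fusion under triality).
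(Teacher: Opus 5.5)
Your proposal is correct and follows essentially the paper's route: you reduce via Lemma \ref{l:key} and Theorem \ref{t:hat} to ${\rm Sp}_8(2)$ and ${\rm P\O}_8^{+}(q)$ with $q \in \{2,3,5\}$, and then decide each tuple using the base sizes from \cite{BGS}, the order bound $|G|>\prod_i|G:H_i|$, \texttt{RegOrbits} and random search. Your plan to check mixed pairs in ${\rm P\O}_8^{+}(5).S_3$ explicitly goes slightly beyond the written proof, which only records that $(H,H)$ is regular there; your numerical slips are harmless to the argument ($3360$ is the degree of the permutation representation of $G$, not $|G:H| = 28431$, and $G_0.S_3$ cannot act on the $19656$ singular points because triality does not preserve them).
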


\begin{proof}
Here $G_0 = {\rm Sp}_8(2)$ or ${\rm P\O}_8^{+}(q)$ with $q \in \{2,3,5\}$. For $G = {\rm Sp}_8(2)$ there are two classes of subgroups in $\mathcal{S}$, represented by $H = S_{10}$ and $K = {\rm L}_2(17)$. By \cite[Theorem 1]{BGS}, we see that 
$(H,H,H)$ and $(K,K)$ are regular, whereas $(H,H)$ is non-regular. And using {\sc Magma}, it is easy to check that $(H,K)$ is regular.

Next assume $G_0 = \O_8^{+}(2)$, in which case $G = G_0$ or $G_0.2$. If $G = G_0$ then there are $3$ classes of $A_9$ subgroups in $\mathcal{S}$, with representatives $H_i$ for $i=1,2,3$. Since $|H_i||H_j|>|G|$, we see that every pair is non-regular. And using {\sc Magma}, one can check that all $4$-tuples are regular, and the only non-regular triples are $(H_i,H_i,H_i)$ for $i = 1,2,3$ (we know the latter are non-regular since $b(G,H_i) = 4$ by \cite[Theorem 1]{BGS}). And if $G = {\rm O}_8^{+}(2) = G_0.2$ then $\mathcal{S}$ comprises a unique class of $S_9$ subgroups and we have $b(G,H) = 4$ by \cite[Theorem 1]{BGS}, so $(H,H)$ and $(H,H,H)$ are non-regular, and $(H,H,H,H)$ is regular.

Now suppose $G_0 = {\rm P\O}_8^{+}(3)$. Here $G \leqs G_0.\la \gamma,\theta\ra = G_0.S_3$ and $H_0 = \O_8^{+}(2)$ (see \cite[Table 8.50]{BHR}), where $\gamma$ and $\theta$ denote graph automorphisms of order $2$ and $3$, respectively. 

To begin with, let us assume $G = G_0$, in which case there are $4$ classes of $\O_8^{+}(2)$ subgroups in $\mathcal{S}$. Since $|\O_8^{+}(2)|^2 > |G|$, we see that every pair is non-regular. And using {\sc Magma}, we see that every $4$-tuple is regular. Now assume $\tau = (H_1,H_2,H_3)$ is a triple of $\O_8^{+}(2)$ subgroups. We claim that $\tau$ is non-regular if and only if it is non-conjugate. 

To see this, first note that $b(G,H_i) = 3$ by \cite[Theorem 1]{BGS}, so every conjugate triple is regular. Now assume $\tau = (H_1,H_2,H_3)$ is non-conjugate. Using {\sc Magma}, we can construct a set $C$ of representatives of the $4$ classes of $\O_8^{+}(2)$ subgroups in $\mathcal{S}$:
{\small
\begin{verbatim}
G:=LowIndexSubgroups(AutomorphismGroupSimpleGroup("O+",8,3),24)[1];
C:=[MaximalSubgroups(G)[i]`subgroup : i in [15..18]];
\end{verbatim}}
\noindent And then for every triple $\tau = (H_1,H_2,H_3)$ of subgroups in $C$ we can use the function \texttt{RegOrbits} (see Example \ref{ex:reg}) to calculate the number $r$ of regular orbits of $H_3$ on the Cartesian product $G/H_1 \times G/H_2$, noting that $\tau$ is regular if and only if $r \geqs 1$. In this way, it is straightforward to check that $\tau$ is non-regular if and only if it is non-conjugate, as claimed.

Similarly, if $G = G_0.\la \gamma\ra = G_0.2$ then $\mathcal{S}$ comprises two classes of ${\rm O}_8^{+}(2)$ subgroups and we can construct $G$ and a set $C$ of class representatives as in Example \ref{ex:con}(v). Then by working with the \texttt{RegOrbits} function as above,  we find that every pair and triple of subgroups in $\mathcal{S}$ is non-regular, whereas every $4$-tuple is regular. And if $G = G_0.3$ or $G_0.S_3$ then there is a unique class of subgroups in $\mathcal{S}$ and the main theorem of \cite{BGS} gives $b(G,H) = 4$, so $(H,H)$ and $(H,H,H)$ are non-regular, whereas $(H,H,H,H)$ is regular.

Finally, let us assume $G_0 = {\rm P\O}_{8}^{+}(5)$. As we showed in the proof of Lemma \ref{l:t2_82}, we have $\widehat{Q}(G,H)<1$ unless $G = G_0.S_3$ and $H = \O_8^{+}(2).S_3$. Here $G$ has a unique conjugacy class of such subgroups and the proof of \cite[Lemma 5.5]{BGS}, which relies on a computation due to Breuer (see \cite{Breuer}), gives $b(G,H) = 2$ and thus $(H,H)$ is regular.
\end{proof}

\begin{lem}\label{l:n7}
The conclusion to Theorem \ref{t:main} holds if $n = 7$. 
\end{lem}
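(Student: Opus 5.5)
By Lemma \ref{l:key} and Theorem \ref{t:hat}, a tuple is automatically regular unless it contains a subgroup $H$ with $\what{Q}(G,H) \geqs 1$. In the proof of Lemma \ref{l:t2_7}, the only cases with $n=7$ where $\what{Q}(G,H)\geqs 1$ are the following two.
\begin{itemize}
\item[(a)] $G_0 = \O_7(q)$ and $H_0 = G_2(q)$, for all $q$. When $q$ is even we read $\O_7(q)$ as ${\rm Sp}_6(q)$, and this case is handled in the same way.
\item[(b)] $q=3$ and $H \in \mathcal{S}$ is arbitrary, with $b(G,H)\geqs 3$ by \cite{BGS}.
\end{itemize}
The case $G_0={\rm L}_7^{\e}(q)$ is already done, since there every $H$ satisfies $\what{Q}(G,H)<1$. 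So I will treat the $G_2(q)$ subgroups for general $q$ first, and then the finite list of groups with $q=3$.

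For (a), let each $H_i$ have socle $G_2(q)$. For $q$ odd there are two $G$-classes, interchanged by nothing inside $G$. Each $H_i \cap G_0$ is the stabiliser of a nondegenerate $7$-dimensional structure, namely a generic $3$-form or octonion algebra; equivalently, it is $G_2(q)=C_{G_0}(\text{triality})$ inside ${\rm P\O}_8^+(q)$.

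\emph{Triples are non-regular.} The cleanest route is a geometric argument. Any two or three $G_2(q)$ subgroups of $\O_7(q)$ contain a common ${\rm SL}_3(q)$ or ${\rm SU}_3(q)$, or at least a common nontrivial subgroup. This is seen via the action on the $\O_8^+(q)$ spin module: $\O_7(q)$ acts on nonsingular vectors of the $8$-space, $G_2(q)$ is the stabiliser of a pair of such vectors, and three pairs span at most a $6$-dimensional space, whose pointwise stabiliser in $\O_8^+$ is $\O_2^\pm$, nontrivial for the relevant signs. Put more carefully, $G_2(q)$ is the stabiliser of a nonsingular spinor $v$ in the $8$-dimensional spin representation of $\O_7(q)$ (up to the $\pm$ choice when $q$ is odd). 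Hence $\bigcap H_i^{g_i}$ contains the stabiliser of three spinors. This stabiliser contains the pointwise stabiliser of a subspace of dimension at most $3$, which is at least ${\rm SL}_2$- or ${\rm SU}_2$-type, and therefore nontrivial. The same argument shows pairs are non-regular. I would also allow the outer automorphism extension, where the claim remains true because $H_i\cap G_0$ already meets nontrivially.

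\emph{$4$-tuples are regular.} Here I expect a fixed point ratio computation to suffice. The ratio ${\rm fpr}(x,G/H)$ is bounded by roughly $q^{-1}$ to $q^{-3}$ for long-root elements, using the bounds from \cite{BGS} that give $b(G,H)=4$. One can then estimate $\what{Q}(G,\tau)$ for $t=4$ via Lemma \ref{l:key}, noting that the proof in \cite{BGS} already yields $\what{Q}(G,H,4)<1$. For small $q$ this is checked with {\sc Magma}.

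For (b), with $q=3$ (and $q=5$ whenever \cite[Table 1]{BGS} lists $\O_7(5)$), the subgroups are $G_2(3)$ (two classes) and ${\rm Sp}_6(2)$, possibly extended by $2$. I would construct $G$ and the class representatives in {\sc Magma}, as in Example \ref{ex:con}. I would then use \texttt{RegOrbits} on pairs and triples to locate the non-regular ones, and \texttt{RandomReg} to confirm regularity of the remaining pairs, triples and all $4$-tuples. Whenever $|H_1||H_2|>|G|$, non-regularity is immediate. The outcome should be exactly the $n=7$ entries of Tables \ref{tab:3} and \ref{tab:2}.

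The main obstacle is the uniform (all $q$) non-regularity of $G_2(q)$ triples. I would argue it through the spinor-stabiliser description; if that fails, I would fall back on counting $|G_2(q)|^3$ against $|G|^2$. Note that $|G_2(q)|\approx q^{14}$ and $|\O_7(q)|\approx q^{21}$, so $q^{42}\geqs q^{42}$ is borderline and not decisive. The geometric argument is therefore the real route.
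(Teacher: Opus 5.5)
\textbf{The gap: your case split drops the mixed tuples.} For $q=p\geqs 5$, the collection $\mathcal{S}$ contains $G_2(q)$ subgroups $H_1,H_2$ and ${\rm Sp}_6(2)$ subgroups $K_1,K_2$. Lemma~\ref{l:key} only settles tuples in which \emph{every} component has $\what{Q}<1$. So the pairs $(H_i,K_j)$ and triples $(H_i,H_j,K_k)$ are not covered by it. Yet your part (a) treats only tuples all of whose components have socle $G_2(q)$, and part (b) only $q=3$ (and possibly $q=5$).

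The theorem makes definite claims about exactly these tuples:
\begin{itemize}
\item for $q\in\{5,7\}$ the pairs $(H_i,K_j)$ are non-regular, since $|H_i||K_j|>|G|$; these are the $\O_7(q)$ entries of Table~\ref{tab:2};
\item for $q\in\{5,7\}$ the triples $(H_i,H_j,K_k)$ are regular, which the paper checks by random search;
\item for $q\geqs 11$ every pair $(H_i,K_j)$ is regular.
\end{itemize}
The paper checks $q=11$ by computer. For $q\geqs 13$ it bounds $\what{Q}(G,\tau)$ directly for $\tau=(H,K)$, and this needs an idea absent from your plan. The involutions alone contribute about $2q^4$ to $\what{Q}(G,H)$ for $H=G_2(q)$, so averaging as in Lemma~\ref{l:key} is useless. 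Instead you must pair the large quantity $|x^G\cap H|$ with the bounded quantity $|x^G\cap K|$, class by class:
\begin{itemize}
\item all involutions of $H$ act as $(-I_4,I_3)$, so involutions contribute at most $i_2(H)\,i_2(K)/(\frac{1}{2}q^6(q^6-1))$;
\item the order-$3$ contribution is computed exactly, by matching the eigenvalue patterns of the classes \texttt{3B} and \texttt{3C} of $K$ with those of order-$3$ elements of $H$ (centralisers ${\rm SL}_3^{\e}(q)$ and ${\rm GL}_2^{\e}(q)$);
\item crude bounds suffice for $r\in\{5,7\}$.
\end{itemize}

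\textbf{Further corrections.} Your dismissal of counting for pure $G_2(q)$ tuples is mistaken. For $q$ odd, $|G:H_i|=\frac{1}{2}q^3(q^4-1)$, so
\[
|G:H_i|^3=\tfrac{1}{8}q^9(q^4-1)^3<\tfrac{1}{2}q^9(q^2-1)(q^4-1)(q^6-1)=|G_0|.
\]
Hence every triple (and every pair) of $G_2(q)$ subgroups is non-regular. The apparent tie $q^{42}$ versus $q^{42}$ is broken by the constants $\frac{1}{8}$ and $\frac{1}{2}$, and this one-line count is exactly the paper's argument.

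The spinor route can be rescued: by orbit counting, the stabiliser in ${\rm Spin}_7(q)$ of any three nonsingular spinors has order roughly at least $q^3$. As written, though, it is muddled. The first version, with $G_2(q)$ as the stabiliser of a pair of vectors and an $\O_2^{\pm}$ pointwise stabiliser, is wrong. The ``at least ${\rm SL}_2$-type'' claim is asserted without treating degenerate configurations.

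Finally, two smaller points. For $q=3$ you omit the two classes of $S_9$ subgroups, which appear in Table~\ref{tab:3}. For $4$-tuples the paper appeals to \cite{A}, not to an unverified claim that \cite{BGS} yields $\what{Q}(G,H,4)<1$.
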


\begin{proof}
Here $G_0 = \O_7(q)$ and either $H_0 = G_2(q)$, ${\rm Sp}_6(2)$ or $A_9$ (with $q=3$ in the latter case).

First assume $q = 3$, in which case $G = G_0$ and there are $6$ classes of subgroups in $\mathcal{S}$, with representatives $H_1,H_2$ (isomorphic to $G_2(3)$), $K_1,K_2$ (isomorphic to ${\rm Sp}_6(2)$) and $L_1,L_2$ (isomorphic to $S_9$). With the aid of {\sc Magma}, it is easy to check that every pair of subgroups in $\mathcal{S}$ is non-regular and every $4$-tuple is regular. As before, we can use the function \texttt{RegOrbits} (see Example \ref{ex:reg}) to determine the regularity status of every triple of subgroups in $\mathcal{S}$, but the result is more complicated to state. 

First we observe that every triple of the form $(H_i,H_j,H_k)$ is non-regular (this is recorded in part (iii)(a) of Theorem \ref{t:main}). And the only non-regular triples with an $S_9$ component are $(H_1,H_1,L_1)$ and $(H_2,H_2,L_2)$, up to a suitable choice of labelling of the class representatives. Similarly, we find that $(K_1,K_1,K_2)$ and $(K_2,K_2,K_1)$ are non-regular, and so are the triples $(H_i,H_j,K_k)$ for all $i,j,k$. In addition, $(H_i,K_1,K_2)$ and $(H_i,K_i,K_i)$ are non-regular for $i = 1,2$ (again, up to a suitable choice of labelling). All other triples are regular.

Now assume $q \geqs 5$. If $q \ne p$ then $H_0 = G_2(q)$ is the only possibility (see \cite[Table 8.40]{BHR}) and there are two conjugacy classes of such subgroups in $\mathcal{S}$. If $\tau = (H_1,H_2,H_3)$ is a triple of such subgroups, then 
$|G:H_i| = \frac{1}{2}q^3(q^4-1)$ and we calculate that 
\[
|G| \geqs \frac{1}{2}q^9(q^2-1)(q^4-1)(q^6-1) > \frac{1}{8}q^9(q^4-1)^3 = |G:H_i|^3,
\]
so $G$ does not have a regular orbit on $G/H_1 \times G/H_2 \times G/H_3$ and thus every triple (and hence every pair) is non-regular (in agreement with part (iii)(a) of Theorem \ref{t:main}). On the other hand, the main theorem of \cite{A} implies that every $4$-tuple is regular (see the proof of \cite[Proposition 6.3]{A} for the details). 

For the remainder, we may assume $q = p \geqs 5$ and $G = G_0$. There are $4$ classes of subgroups comprising $\mathcal{S}$, with representatives $H_1,H_2$ and $K_1,K_2$, where $H_i = G_2(q)$ and $K_i = {\rm Sp}_6(2)$ for $i=1,2$. By arguing as above, every $4$-tuple is regular, and every pair and triple comprising subgroups with socle $G_2(q)$ is non-regular. In addition, we note that $\what{Q}(G,K_i)<1$ for $i=1,2$ by Theorem \ref{t:hat}, so Lemma \ref{l:key} implies that every pair comprising subgroups with socle ${\rm Sp}_6(2)$ is regular. Therefore, it just remains to determine the regularity status of the pairs $(H_i,K_j)$ and the triples $(H_i,H_j,K_k)$ for $i,j,k \in \{1,2\}$.

If $q \in \{5,7\}$, then $|H_i||K_j| > |G|$ and so each pair $(H_i,K_j)$ is non-regular. And using {\sc Magma} (as in Example \ref{ex:con}(i)), we can construct $G$ and each $H_i, K_j$ and then by random search we can find elements $x,y \in G$ such that $H_i \cap H_j^x \cap K_k^y = 1$ for all $i,j,k \in \{1,2\}$. Therefore, every triple of the form $\tau = (H_i,H_j,K_k)$ is regular. Similarly, for $q=11$ we can check that every pair $(H_i,K_j)$ is regular.

To complete the proof, we may assume $q=p \geqs 13$ and we note that it suffices to show that $\tau = (H,K)$ is regular, where $H = G_2(q)$ and $K = {\rm Sp}_6(2)$. To do this, first note that $\pi(K) = \{2,3,5,7\}$ and write $\what{Q}(G,\tau) = \a+\b$, where $\a$ is the contribution from involutions. 

Now $H$ has a unique class of involutions, which act as $(-I_4,I_3)$ on the natural module for $G$ (see the proof of \cite[Lemma 2.13]{FPR4}, for example). Therefore,
\[
|x^G| \geqs \frac{|{\rm SO}_7(q)|}{2|{\rm SO}_{4}^{-}(q)||{\rm SO}_3(q)|} = \frac{1}{2}q^6(q^6-1) = f(q)
\]
for every involution $x \in H$ and thus
\[
\a \leqs i_2(H)i_2(K) \cdot f(q)^{-1} = 5103q^4(q^4+q^2+1) \cdot f(q)^{-1}.
\]

Now suppose $x \in G$ has order $r \in \{3,5,7\}$, so $x$ is semisimple since $q = p \geqs 13$. First assume $r = 5$. If $q^2 \equiv -1 \imod{5}$, then $i_5(H) = 0$, so let us assume $q^2 \equiv 1 \imod{5}$. Then $H$ has four classes of elements of order $5$, which gives 
\[
i_5(H) \leqs 4\cdot\frac{|G_2(q)|}{|{\rm GL}_2(q)|} < 5q^{10} = a_5
\]
(by \cite[Lemma 4.8]{BF}, no elements of order $5$ in $H$ are regular). In addition, $i_5(K) = 48384$ and there is a unique class of elements $x \in K$ of order $5$. By inspecting the character table of $K$ we deduce that
\[
|x^G| \geqs \frac{|{\rm SO}_7(q)|}{|{\rm SO}_3(q)||{\rm GU}_1(q)|^2}>\frac{1}{2}q^{16} = b_5.
\]
Similarly, for $r=7$ we have
\[
i_7(H) \leqs 6\cdot \frac{|G_2(q)|}{|{\rm GL}_2(q)|} + \frac{|G_2(q)|}{|{\rm GL}_1(q)|^2} < 2q^{12} = a_7
\]
and from the character table of $K$ we see that 
\[
|x^G| \geqs \frac{|{\rm SO}_7(q)|}{|{\rm GU}_1(q)|^3} > \frac{1}{2}q^{18} = b_7
\]
for all $x \in K$ of order $7$. We also note that $i_7(K) = 207360$.

We need a more precise estimate for the contribution to $\what{Q}(G,\tau)$ from elements of order $3$. To do this, set $\bar{V} = V \otimes k$, where $k$ is the algebraic closure of $\mathbb{F}_q$, and let $\omega \in k$ be a primitive cube root of unity. Now $K$ has $3$ classes of elements of order $3$, labelled \texttt{3A}, \texttt{3B} and \texttt{3C}. If $x \in K$ has order $3$, then by inspecting the character table of $K$ we deduce that
\[
\bar{V}{\downarrow}\,x = \left\{\begin{array}{ll}
(I_5,\omega,\omega^2) & \mbox{if $x \in \texttt{3A}$} \\
(I_1,\omega I_3, \omega^2I_3) & \mbox{if $x \in \texttt{3B}$} \\
(I_3,\omega I_2, \omega^2I_2) & \mbox{if $x \in \texttt{3C}$.} 
\end{array}\right.
\]
Similarly, if $x \in H$ has order $3$, then 
\[
\bar{V}{\downarrow}\,x = \left\{\begin{array}{ll}
(I_1,\omega I_3, \omega^2I_3) & \mbox{if $C_H(x) = {\rm SL}_3^{\e}(q)$} \\
(I_3,\omega I_2,\omega^2I_2) & \mbox{otherwise.}
\end{array}\right.
\]
Since $|\texttt{3B}| = 2240$ and $|\texttt{3C}| = 13440$, it follows that the precise contribution from order $3$ elements is
\[
\gamma = 2240 \cdot \frac{|G_2(q)|}{|{\rm SL}_3^{\e}(q)|} \cdot \frac{|{\rm GL}_3^{\e}(q)|}{|{\rm SO}_7(q)|} + 13440 \cdot \frac{|G_2(q)|}{|{\rm GL}^{\e}_2(q)|} \cdot \frac{|{\rm SO}_3(q)||{\rm GL}_2^{\e}(q)|}{|{\rm SO}_7(q)|},
\]
where $q \equiv \e \imod{3}$. 

By bringing together all of the above estimates, we conclude that 
\[
\b < \gamma + 48384a_5b_5^{-1} + 207360a_7b_7^{-1}.
\]
And by combining this with the above bound on $\a$, we are able to conclude that $\what{Q}(G,\tau)<1$ for all $q \geqs 13$.
\end{proof}

\begin{lem}\label{l:n5}
The conclusion to Theorem \ref{t:main} holds if $n \in \{5,6\}$. 
\end{lem}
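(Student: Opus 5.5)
By Lemma \ref{l:key} and Theorem \ref{t:hat}, only the groups with $n \in \{5,6\}$ that appear in \cite[Table 1]{BGS} or in Table \ref{tab:new} need to be considered. For $n=5$ this leaves $G_0 = {\rm U}_5(2)$ with $H = {\rm L}_2(11)$ or ${\rm PGL}_2(11)$; in this setting $\mathcal{S}$ is a single conjugacy class by \cite[Table 8.21]{BHR}. Here $b(G,H)=2$ by \cite[Theorem 1]{BGS}, so every $t$-tuple with $t \geqs 2$ is a conjugate tuple and is therefore regular. It follows that $n=5$ produces no exceptions, as the statement of Theorem \ref{t:main} requires.

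For $n=6$, I will read off from \cite[Table 1]{BGS} which cases survive. These are ${\rm U}_6(2).c$ with $H_0 \in \{{\rm U}_4(3), {\rm M}_{22}\}$, ${\rm Sp}_6(2)$ with $H = {\rm U}_3(3).2$, and ${\rm PSp}_6(q)$ with $q \geqs 4$ even and $H_0 = G_2(q)$. The last family is handled exactly like the $G_2(q)$ case in Lemma \ref{l:n7}, since ${\rm Sp}_6(q) \cong \O_7(q)$ for $q$ even. There is a unique class of $G_2(q)$ subgroups, and the index estimate $|G| > |G:H|^3$ shows that $(H,H)$ and $(H,H,H)$ are non-regular. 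Since $b(G,H)=4$, the $4$-tuple $(H,H,H,H)$ is regular. This confirms part (iii)(a) and (iv)(a) for even $q$.

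For ${\rm Sp}_6(2)$, $\mathcal{S}$ consists of the single class $H = {\rm U}_3(3).2$, and $b(G,H)=4$ gives the conclusion directly. For $G_0 = {\rm U}_6(2)$ and each $G \leqs {\rm Aut}(G_0)$, I will build $G$ and representatives of every class in $\mathcal{S}$ in {\sc Magma}. The subgroups are the several classes of ${\rm U}_4(3)$-type and ${\rm M}_{22}$-type, permuted by the outer automorphism group $S_3$, and I will construct them via \texttt{ClassicalMaximals} or \texttt{MaximalSubgroups} in a permutation representation as in Example \ref{ex:con}. For every pair and triple of representatives, I will first check $|G| > \prod |G:H_i|$ to get non-regularity cheaply. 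Where that inequality fails, I will run \texttt{RegOrbits} (Example \ref{ex:reg}) to count regular orbits, and use \texttt{RandomReg} (Example \ref{ex:random}) to certify regularity when a tuple is regular. The same search applied to $4$-tuples should show that all are regular except $(H,H,H,H)$ with $G = {\rm U}_6(2).2$ and $H = {\rm U}_4(3).2^2$; that exception is known from $b(G,H)=5$ in \cite{BGS}. The $5$-tuples are all regular by \cite[Theorem 3(iii)]{A}. The non-regular pairs and triples found this way are exactly what gets recorded in Tables \ref{tab:2} and \ref{tab:3}.

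I expect the main obstacle to be the computations for ${\rm U}_6(2).S_3$ and ${\rm U}_6(2).3$. There, triality-like outer automorphisms fuse or permute the classes of ${\rm U}_4(3)$ and ${\rm M}_{22}$ subgroups, so I must choose the class representatives carefully. In addition, the indices $|G:H|$ for $H_0={\rm M}_{22}$ are large enough that \texttt{RegOrbits} on triples may be slow. My first attempt will be to work in the smallest faithful permutation representation of $G$ and to compute orbits of $H_3$ on $G/H_1$ first. I will then test the stabilisers of those orbit representatives against $G/H_2$, so that the full Cartesian product is never formed.
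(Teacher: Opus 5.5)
Your route is the same as the paper's. You reduce via Lemma \ref{l:key} and Theorem \ref{t:hat} to ${\rm U}_5(2)$, ${\rm Sp}_6(q)$ with $q$ even, and ${\rm U}_6(2)$. You settle the single-class cases with the base sizes from \cite{BGS}, and you handle ${\rm U}_6(2)$ by direct computation with order/index checks, \texttt{RegOrbits} and random search. Two statements need correcting, though neither breaks the argument.

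\textbf{The index estimate for $G_2(q)$.} The inequality $|G|>|G:H|^3$ is false for $G={\rm Sp}_6(q)$, $q$ even, with $H=G_2(q)$. Here $|G:H|=q^3(q^4-1)$, so
\[
\frac{|G|}{|G:H|^3} = \frac{(q^2-1)(q^6-1)}{(q^4-1)^2} = \frac{q^4+q^2+1}{(q^2+1)^2} < 1.
\]
For example, $|{\rm Sp}_6(2)| = 1451520 < 120^3$. The estimate in Lemma \ref{l:n7} works only because $|\O_7(q)|$ carries a factor $\frac{1}{2}$ when $q$ is odd, so it does not transfer via ${\rm Sp}_6(q)\cong \O_7(q)$. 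You do not need it. Since $\mathcal{S}$ is a single class, every tuple is conjugate, and $b(G,H)=4$ from \cite{BGS} alone shows that pairs and triples are non-regular and $4$-tuples are regular. This is exactly what the paper uses, both for $G_2(q)$ and for ${\rm U}_3(3).2$ when $q=2$.

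\textbf{${\rm U}_6(2).3$ and ${\rm U}_6(2).S_3$.} The obstacle you anticipate here does not arise. The outer automorphism of order $3$ is diagonal, not triality-like, and it permutes the three $G_0$-classes of ${\rm U}_4(3).2_2$ subgroups and likewise the three classes of ${\rm M}_{22}$ subgroups. A maximal subgroup $M$ of $G$ satisfies $G=G_0M\leqs G_0N_G(M\cap G_0)$, so $G$ must stabilise the $G_0$-class of $M\cap G_0$. Hence none of these subgroups give members of $\mathcal{S}$ when $3$ divides $|G:G_0|$. So $|G:G_0|\leqs 2$, as the paper notes, and the computations are needed only for $G_0$ and $G_0.2$.
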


\begin{proof}
If $n = 5$, then $G_0 = {\rm U}_5(2)$, $H_0 = {\rm L}_2(11)$ and there is a unique class of such subgroups in $\mathcal{S}$ (see \cite[Table 8.21]{BHR}). By the main theorem of \cite{BGS} we have $b(G,H) = 2$ and thus $(H,H)$ is regular.

Next assume $G_0 = {\rm U}_6(2)$, in which case $|G:G_0| \leqs 2$ and we have $H_0 = {\rm U}_4(3)$ or ${\rm M}_{22}$. To begin with, let us assume $G = G_0$ and observe that $\mathcal{S}$ comprises $3$ classes of subgroups isomorphic to ${\rm U}_4(3).2_2$, together with $3$ classes isomorphic to ${\rm M}_{22}$ (here the notation ${\rm U}_4(3).2_2$ indicates that this group contains involutory graph automorphisms of symplectic-type). Let us write $H_i$ and $K_i$ (with $i = 1,2,3$) for representatives of the respective classes. With the aid of {\sc Magma}, we find that all $4$-tuples are regular and all pairs are non-regular (for the latter claim, simply observe that $|H||K|>|G|$ for all $H,K \in \mathcal{S}$). In addition, all triples with three $H_i$ components are non-regular (since $|G| > |G:H_i|^3$), whereas every triple with two or more $K_i$ components is regular (this is easily checked by random search). Finally, suppose $\tau = (H_i,H_j,K_k)$. By working with the function \texttt{RegOrbits} (see Example \ref{ex:reg}), we find that exactly three such triples are regular; we can label the representatives so that $\tau$ is regular if and only if $(i,j,k) = (1,2,1)$, $(2,3,2)$ or $(3,1,3)$. 

Now assume $G = G_0.2$. Here each subgroup in $\mathcal{S}$ is conjugate to $H = {\rm U}_4(3).(2^2)_{122}$ or $K = {\rm M}_{22}.2$ (here the notation indicates that $H =H_0.\la x,y \ra$, where $x$ and $y$ are commuting involutions such that $x \in {\rm PGU}_4(3)$ and $y,xy$ are graph automorphisms of symplectic-type). Using {\sc Magma}, we check that every $5$-tuple is regular, and that $(H,H,H,H)$ is the only non-regular $4$-tuple, which corresponds to the special case recorded in part (ii) of Theorem \ref{t:main}. Every pair of subgroups is non-regular and with the aid of the \texttt{RegOrbits} function (see Example \ref{ex:reg}), the reader can readily check that $(H,H,H)$ and $(H,H,K)$ are the only non-regular triples.

To complete the proof of the lemma, we may assume $G_0 = {\rm Sp}_6(q)$ with $q$ even. If $q =2$ then $H_0 = {\rm U}_3(3).2$ is the only possibility and there is a unique conjugacy class of such subgroups. By the main theorem of \cite{BGS} we have $b(G,H) = 4$, so $(H,H)$ and $(H,H,H)$ are non-regular, whereas $(H,H,H,H)$ is regular. Similarly, if $q \geqs 4$ is even, then $H_0 = G_2(q)$ is the only option and the same conclusion holds; this is the special case arising in parts (iii)(a) and (iv)(a) of Theorem \ref{t:main}. 
\end{proof}

\begin{lem}\label{l:n4}
The conclusion to Theorem \ref{t:main} holds if $n=4$.
\end{lem}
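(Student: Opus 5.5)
We follow the reduction used throughout this section. By Lemma \ref{l:key} and Theorem \ref{t:hat}, a tuple $\tau$ with $t \geqs 2$ is regular unless some component $H$ satisfies $\what{Q}(G,H) \geqs 1$. For $n=4$, inspecting \cite[Table 1]{BGS} and Table \ref{tab:new} (together with the proof of Lemma \ref{l:t2_4}) leaves four families:
\begin{itemize}
\item[(a)] $G_0 = {\rm U}_4(3)$ with $H_0 \in \{A_7, {\rm L}_3(4)\}$;
\item[(b)] $G_0 = {\rm U}_4(5)$, with $G = G_0.2_1$ and $H = {\rm U}_4(2).2$;
\item[(c)] $G_0 = {\rm PSp}_4(q)$ with $q \in \{5,7\}$, $G = {\rm PGSp}_4(q)$ and $H = S_6$ or $S_7$ respectively;
\item[(d)] $G_0 = {\rm Sp}_4(q)$ with $q = 2^f$, $f \geqs 3$ odd, and $H_0 = {}^2B_2(q)$.
\end{itemize}
In each family we must settle the regularity of every tuple in which at least one component is a ``bad'' subgroup. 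By Lemma \ref{l:key} again, only tuples built from subgroups in $\mathcal{S}$ of $G$ matter.

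Cases (b) and (c) are small, and we would treat them computationally, constructing $G$ and all classes in $\mathcal{S}$ as in Example \ref{ex:con} (for (b), via \texttt{LowIndexSubgroups} as in Lemma \ref{l:t2_4}). Since $b(G,H)=2$ there, $(H,H)$ is regular. For each mixed pair $(H,K)$ we first test $|H||K|>|G|$. If that fails, we run \texttt{RandomReg} to exhibit regularity, and otherwise \texttt{RegOrbits} to certify non-regularity. Triples containing a regular pair are automatically regular. Case (a) is handled the same way: $G_0={\rm U}_4(3)$ has small index subgroups in $\mathcal{S}$, and \cite[Theorem 1]{BGS} already gives $b(G,H)\geqs 3$. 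For $G=G_0,G_0.2,\ldots$ we then determine all non-regular pairs and triples via the index inequality and \texttt{RegOrbits}, check $4$-tuples by random search, and record the outcomes in Tables \ref{tab:3} and \ref{tab:2}.

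Case (d) is infinite, so it needs a theoretical argument. Here $\mathcal{S}$ consists of a single class, so only the conjugate tuples $(H,\ldots,H)$ occur. By \cite[Theorem 1]{BGS} we have $b(G,H)=3$, so $(H,H)$ is non-regular, which gives part (iv)(b). The triple $(H,H,H)$ is regular, and so is every longer tuple. The same holds for $G = G_0.\la\gamma\ra$ with $H = {}^2B_2(q)\times 2$, which is again covered by the base size result in \cite{BGS}.

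The main obstacle is the bookkeeping in (a). There are several classes with different automorphism extensions (for example ${\rm U}_4(3).2_i$ and ${\rm U}_4(3).D_8$), and triples such as $(H,H,K)$ may behave irregularly. Constructing every almost simple $G$ and the correct class representatives in \textsc{Magma}, and running \texttt{RegOrbits} on all triples, is the step we expect to be most delicate.
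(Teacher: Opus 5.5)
Your proposal is correct and is essentially the paper's argument. The paper also reduces via Lemma \ref{l:key} and Theorem \ref{t:hat} to ${\rm U}_4(3)$, ${\rm U}_4(5).2_1$ and ${\rm PGSp}_4(q)$ with $q \in \{5,7\}$, settles these by {\sc Magma} computations, and handles ${\rm Sp}_4(2^f)$ using the uniqueness of the class in $\mathcal{S}$ together with $b(G,H)=3$ from \cite{BGS}.

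Your check of mixed pairs in case (c) is actually slightly more careful than the paper, which asserts a unique class there. By the proof of Lemma \ref{l:t2_4}, ${\rm PGSp}_4(7)$ also has ${\rm PGL}_2(7)$ in $\mathcal{S}$. The pair $(S_7,{\rm PGL}_2(7))$ is regular anyway, since the averaged bound in Lemma \ref{l:key} gives $\what{Q}(G,\tau) \leqs \frac{1}{2}(\frac{293}{196}+\frac{53}{1225})<1$.
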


\begin{proof}
First assume $G_0 = {\rm U}_4(q)$ is a unitary group, in which case $q \in \{3,5\}$. For $q=5$, we can use {\sc Magma} to check that every pair of subgroups in $\mathcal{S}$ is regular, proceeding as in Example \ref{ex:con}(v) to construct $G$ and a set of representatives for the conjugacy classes of subgroups in $\mathcal{S}$.

Now suppose $G_0 = {\rm U}_4(3)$, so $H_0 = A_7$ or ${\rm L}_3(4)$. If $G = G_0$ then the subgroups in $\mathcal{S}$ are isomorphic to $A_7$ ($4$ classes) or ${\rm L}_3(4)$ ($2$ classes). With the aid of {\sc Magma}, we check that every $4$-tuple of subgroups in $\mathcal{S}$ is regular, whereas all pairs are non-regular. In addition, every triple with at least one $A_7$ component is regular, whereas all the triples with three ${\rm L}_3(4)$ components are non-regular, which we can verify by using the function \texttt{RegOrbits}.

Next assume $G = G_0.2_1 < {\rm PGU}_4(3)$. Here there are two classes in $\mathcal{S}$, both of which contain subgroups $H$   isomorphic to ${\rm L}_3(4).2_2$ (the notation indicates that $H$ contains involutory field automorphisms of $H_0$). As above, we find that all $4$-tuples are regular and all triples are non-regular. The case $G = G_0.2_3$ (extended by a graph automorphism of type ${\rm SO}_{4}^{-}(3)$) is similar. Here there are two classes of subgroups in $\mathcal{S}$; one contains subgroups isomorphic to ${\rm L}_3(4).2_1$, whereas the subgroups in the other class are isomorphic to ${\rm L}_3(4).2_3$ (so the former contain involutory graph-field automorphisms of $H_0$, and the latter contain involutory graph automorphisms). Once again, every $4$-tuple is regular and every triple is non-regular.

Next suppose $G = G_0.2_2$, so $G$ contains an involutory graph automorphism $x$ with $C_{G_0}(x)$ of type ${\rm Sp}_4(3)$. Here  there are two classes of subgroups in $\mathcal{S}$ isomorphic to $S_7$; all triples are regular and all pairs are non-regular. 

Finally, suppose $G = G_0.2^2 = G_0.\la \delta^2,\gamma\delta \ra$ in the notation of \cite[Table 8.11]{BHR}, noting that $\mathcal{S}$ is empty for all other almost simple groups with socle $G_0$. (Note that we can also write $G = G_0.(2^2)_{133}$ in this case, since all of the involutory graph automorphisms in $G$ are of type ${\rm SO}_{4}^{-}(3)$.) Here there are two classes of subgroups in $\mathcal{S}$ isomorphic to ${\rm L}_3(4).2^2$ and in the usual manner it is routine to check that every $4$-tuple is regular and all triples are non-regular.

Next assume $G_0 = {\rm PSp}_4(q)$. If $q$ is odd then we may assume $q \in \{5,7\}$, $G = G_0.2$ and $H = S_6$ (for $q = 5$) or $S_7$ (for $q=7$). There is a unique class of subgroups in $\mathcal{S}$ and the main theorem of \cite{BGS} gives $b(G,H) = 2$, so $(H,H)$ is regular.

Finally, suppose $G_0 = {\rm Sp}_4(q)$ with $q = 2^f$ even and $f \geqs 2$. If $f$ is even, then $\mathcal{S}$ is empty, so let us assume $f$ is odd. Then $H_0 = {}^2B_2(q)$ and there is a unique class of subgroups in $\mathcal{S}$. By the main theorem of \cite{BGS} we have $b(G,H) = 3$, so $(H,H,H)$ is regular and $(H,H)$ is non-regular, as recorded in part (iv)(b) of Theorem \ref{t:main}.
\end{proof}

\begin{lem}\label{l:n3}
The conclusion to Theorem \ref{t:main} holds if $n=3$.
\end{lem}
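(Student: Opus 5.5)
By the reduction at the start of Section \ref{s:main}, combining Lemma \ref{l:key} with Theorem \ref{t:hat}, only the groups with $n=3$ appearing in \cite[Table 1]{BGS} need attention, since Table \ref{tab:new} contains no case with $n=3$. Inspecting the proof of Lemma \ref{l:t2_3}, these are the following. First, $G_0 = {\rm L}_3(4)$ with $H_0 = A_6$, where $b(G,H) = 3+\a$ as corrected in Remark \ref{r:main}(d). Second, $G_0 = {\rm U}_3(3)$ with $H_0 = {\rm L}_2(7)$. Third, $G_0 = {\rm U}_3(5)$, where $b(G,H) \geqs 3$ for every $H \in \mathcal{S}$. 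For every other $G$ with $n=3$, each $H \in \mathcal{S}$ satisfies $\what{Q}(G,H)<1$. Lemma \ref{l:key} then shows that every pair is regular, so there is nothing to record. In all three remaining cases the groups are small, so I would work case by case in {\sc Magma}.

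The first step is to determine, for each almost simple $G$ with socle ${\rm L}_3(4)$, ${\rm U}_3(3)$ or ${\rm U}_3(5)$, a set of representatives of the conjugacy classes of maximal subgroups in $\mathcal{S}$, using \cite[Tables 8.3--8.6]{BHR}. I would construct $G$ via \texttt{AutomorphismGroupSimpleGroup} and \texttt{LowIndexSubgroups}, and then read off the relevant maximal subgroups with \texttt{MaximalSubgroups}, as in Example \ref{ex:con}(v). The structure varies with $G$. For ${\rm L}_3(4)$ there are three classes of $A_6$ in $G_0$, and these fuse or extend differently in $G_0.2_i$, $G_0.2^2$, $G_0.3$ and $G_0.6$ and so on. For ${\rm U}_3(5)$ there are classes with socle $A_7$, $A_6$ and $M_{10}$-type groups. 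Then, for each $G$ and each $t$-tuple of class representatives with $t \in \{2,3,4\}$, I would decide regularity in the standard way. Non-regularity follows immediately when $|G| > \prod_i |G:H_i|$. Otherwise I would use \texttt{RegOrbits} from Example \ref{ex:reg}, which is entirely feasible because the indices here are at most a few thousand. Regularity is confirmed by \texttt{RandomReg} from Example \ref{ex:random}. For conjugate tuples, the known values of $b(G,H)$ from \cite{BGS} and Remark \ref{r:main}(d) settle the question directly. For example, $(A_6.2^2,A_6.2^2,A_6.2^2)$ is non-regular in ${\rm L}_3(4).2^2$, and $({\rm L}_2(7).2)^3$ is non-regular in ${\rm U}_3(3).2$.

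It remains to check that every $5$-tuple is regular, which is consistent with \cite[Theorem 3(iii)]{A}. I would also check that every $4$-tuple is regular except in cases that would have to appear in part (ii), and I expect none to occur for $n=3$. The non-regular pairs and triples found in this way are then recorded in Tables \ref{tab:3} and \ref{tab:2}.

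The main obstacle I expect is bookkeeping rather than theory. There are many almost simple extensions of ${\rm L}_3(4)$, whose outer automorphism group is $D_{12}$, and for each one the classes of $A_6$-type subgroups in $\mathcal{S}$ must be identified correctly. In particular, I need to know which extensions contain $\mathcal{S}$-subgroups at all: novelty or non-maximality can remove classes, for instance when a graph automorphism swaps two classes. I also need a consistent labelling of classes, so that the resulting non-regular non-conjugate triples can be stated up to conjugacy and reordering. I would cross-check the class counts against \cite[Table 8.3]{BHR}. I would also verify each non-regular triple independently, both by the order inequality where it applies and by \texttt{RegOrbits}.
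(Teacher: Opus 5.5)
Your proposal is correct and takes essentially the same route as the paper. Lemma \ref{l:key} and Theorem \ref{t:hat} reduce the problem to the socles ${\rm L}_3(4)$, ${\rm U}_3(3)$ and ${\rm U}_3(5)$, since Table \ref{tab:new} has no entries with $n=3$; each remaining tuple is then settled by order comparisons, \texttt{RegOrbits}, random search and the known base sizes, including the corrected value $b(G,H)=4$ for $G={\rm L}_3(4).2^2$. The only slips are bookkeeping ones that your cross-check against \cite{BHR} would catch: $\mathcal{S}$ is empty for every extension of ${\rm L}_3(4)$ other than $G_0$, $G_0.2_i$ and $G_0.2^2$, because the order-$3$ outer automorphism permutes the three $A_6$ classes; and for ${\rm U}_3(5).2$ your list omits the novelty class ${\rm PGL}_2(7)$, which occurs alongside $S_7$ and $A_6.2^2$.
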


\begin{proof}
By inspecting \cite[Table 1]{BGS} and Table \ref{tab:new}, we see that 
\[
G_0 \in \{ {\rm L}_3(4), {\rm U}_3(3), {\rm U}_3(5)\}.
\]

First assume $G_0 = {\rm L}_3(4)$. If $G = G_0$ then there are $3$ classes of subgroups in $\mathcal{S}$, each comprising subgroups isomorphic to $A_6$, and using {\sc Magma} we find that all triples are regular and all pairs are non-regular. Next assume $G = G_0.2 = G_0.\la \theta \ra$. If $\theta$ is an involutory graph-field automorphism then there are $3$ classes of subgroups in $\mathcal{S}$ isomorphic to ${\rm M}_{10} = A_6.2$; every triple is regular and every pair is non-regular. And if $\theta$ is a field or graph automorphism, then there is a unique class of subgroups $H \in \mathcal{S}$ (with $H \cong S_6$ if $\theta$ is a field automorphism, otherwise $H \cong {\rm PGL}_{2}(9)$) and we note that $b(G,H) = 3$ by \cite[Theorem 1]{BGS}, so $(H,H)$ is non-regular and $(H,H,H)$ is regular. Finally, we may assume $G = G_0.2^2$ (in all other cases, $\mathcal{S}$ is empty). Here there is a unique class of subgroups $H \in \mathcal{S}$ with $H = {\rm Aut}(A_6) = A_6.2^2$ and we can use {\sc Magma} to check that $b(G,H) = 4$, so $(H,H)$ and $(H,H,H)$ are non-regular and $(H,H,H,H)$ is regular.

Next assume $G_0 = {\rm U}_3(3)$, so $H_0 = {\rm L}_2(7)$ and there is a unique class of subgroups in $\mathcal{S}$. If $G = G_0$ then $H = H_0$ and $b(G,H) = 3$, so $(H,H)$ is non-regular and $(H,H,H)$ is regular. Similarly, if $G = G_0.2$, then $H = H_0.2$ and $b(G,H) = 4$, so $(H,H)$ and $(H,H,H)$ are non-regular, and $(H,H,H,H)$ is regular. 

Finally, suppose $G_0 = {\rm U}_3(5)$. If $G = G_0$, then there are $6$ conjugacy classes of subgroups comprising $\mathcal{S}$, with representatives $H_i$ and $K_i$ (for $i = 1,2,3$), where $H_i \cong A_6.2 = {\rm M}_{10}$ and $K_i \cong A_7$. Just by considering orders, we see that every pair of subgroups in $\mathcal{S}$ is non-regular. And with the aid of {\sc Magma}, we check that every $4$-tuple is regular. The result for triples is more complicated. Firstly, every triple of $A_7$ subgroups is non-regular since $|G|>|G:K_i|^3$, whereas every triple with two or more ${\rm M}_{10}$ components is regular, so it remains to consider triples of the form $({\rm M}_{10},A_7,A_7)$. Using the function \texttt{RegOrbits} (see Example \ref{ex:reg}), we find that there are precisely $3$ regular triples, up to ordering and conjugacy; we can label the $H_i$ and $K_i$ so that $(H_i,K_i,K_i)$ is regular for $i = 1,2,3$. 

The case $G = {\rm U}_3(5).2$ is similar. Here each subgroup in 
$\mathcal{S}$ is isomorphic to $S_7$, $A_6.2^2$ or ${\rm PGL}_2(7)$, and there is a unique class of each type. In the usual manner, we find that every $4$-tuple is regular and every pair is non-regular. In addition, the only non-regular triples are $(S_7,S_7,S_7)$ and $(S_7,S_7,A_6.2^2)$.
\end{proof}

\begin{lem}\label{l:n2}
The conclusion to Theorem \ref{t:main} holds if $n=2$.
\end{lem}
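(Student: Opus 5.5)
Here $G_0 = {\rm L}_2(q)$ and $H_0 = A_5$. We exclude $q=9$ by Remark \ref{r:S2}(ii), so by \cite[Table 8.2]{BHR} we may assume $q \in \{p, p^2\}$ with $q \equiv \pm 1 \imod{10}$ and $q \geqs 11$. For each such $G$ there are at most two conjugacy classes of subgroups in $\mathcal{S}$. If $G = G_0$ and $q = p \equiv \pm 1 \imod{10}$, there are two classes of $A_5$ subgroups, interchanged by ${\rm PGL}_2(q)$. Otherwise (for example when $G = {\rm PGL}_2(q)$, or when $q = p^2$) the normaliser is unique up to conjugacy. The reduction at the start of Section \ref{s:main} then applies. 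By Lemma \ref{l:key}, any pair in which both components satisfy $\what{Q}(G,H)<1$ is regular, and by Theorem \ref{t:hat} (see Lemma \ref{l:t2_2}) $\what{Q}(G,H) \geqs 1$ only when $q \in \{11,19\}$ (where $b(G,H) = 3$ by \cite{BGS}) or $q \in \{29,31\}$ (Table \ref{tab:new}, where $G = G_0$ and $b(G,H) = 2$). For every other $q$, each pair (and hence each $t$-tuple with $t \geqs 2$) is regular. So only these four fields need attention.

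For $q \in \{11,19\}$ we have $b(G,H) = 3$, so $(H,H)$ is non-regular and every conjugate triple is regular. When $G = G_0$ there are two classes $H_1, H_2$, and we must also decide the non-conjugate pair $(H_1,H_2)$. For $q = 11$ it is non-regular by order: $|H_1||H_2| = 3600 > 660 = |G|$. For $q = 19$ we have $|G| = 3420 < 3600$, so order gives nothing and I would compute directly in {\sc Magma}. Construct $G_0$ as a permutation group of degree $q+1$ and take representatives of the two $A_5$ classes via \texttt{MaximalSubgroups}. Then use \texttt{RegOrbits} (Example \ref{ex:reg}) to count the regular orbits of $H_2$ on $G/H_1$; this index is only $57$, so the check is trivial. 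Triples of the form $(H_1,H_1,H_2)$ are handled the same way, by \texttt{RandomReg} or \texttt{RegOrbits}. Any resulting non-regular pairs or triples are then matched against Tables \ref{tab:3} and \ref{tab:2}. When $G = {\rm PGL}_2(q)$, we have $H \cap G_0 = A_5$ of index $2$ in $H$, which happens only when $q \equiv \pm 1 \imod{10}$ and $A_5$ extends to $S_5$. In that case there is a single class with $b(G,H) = 3$, so the pair is non-regular and the triple regular, directly from \cite[Theorem 1]{BGS}.

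For $q \in \{29,31\}$ we have $G = G_0$ and $b(G,H) = 2$, so the conjugate pairs $(H_i,H_i)$ are regular. The only remaining case is the non-conjugate pair $(H_1,H_2)$. Here $|G| = 12180$ and $14880$ respectively, both exceeding $3600$, so again we need an explicit check: a random search for $x$ with $H_1 \cap H_2^x = 1$, or \texttt{RegOrbits} on $G/H_1$, which has degree $203$ or $248$. I expect these to be regular, but the computation decides. I would also note a possible conceptual shortcut: $H_2$ is the image of $H_1$ under an outer diagonal automorphism, and one could try to exhibit an element of ${\rm PGL}_2(q)$ conjugating $H_1$ to a subgroup meeting $H_1$ trivially. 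In any case, the only real obstacle in this lemma is getting the exact list of classes in $\mathcal{S}$ right for each $(G,q)$, and in particular confirming when two non-conjugate classes exist. Once that bookkeeping is correct, all remaining verifications are small computations, and every $t$-tuple with $t \geqs 4$ (indeed $t \geqs 3$) is regular because $b(G,H) \leqs 3$ throughout.
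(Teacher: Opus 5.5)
\textbf{Gaps in the proposal.} Your reduction to $q\in\{11,19,29,31\}$ via Theorem \ref{t:hat} and Lemma \ref{l:key} matches the paper, but the closing step fails. You assert that every $t$-tuple with $t\geqslant 3$, and hence every $4$-tuple, is regular ``because $b(G,H)\leqslant 3$ throughout''.

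\begin{itemize}
\item[(a)] A base size bound only controls \emph{conjugate} tuples. It says nothing about tuples with components from both classes $H_1,H_2$; this is exactly the phenomenon of Remark \ref{r:key}.
\item[(b)] The conclusion is false. For $q=11$ the triples $(H_1,H_1,H_2)$ and $(H_1,H_2,H_2)$ are non-regular; this is the last row of Table \ref{tab:3}. You can check this by hand. The two classes of $A_5$ in ${\rm L}_2(11)$ are the point and block stabilisers of the $(11,5,2)$ biplane, and the stabiliser of two points $a,b$ is $S_3$. This $S_3$ preserves the partition of the $11$ blocks into sets of sizes $2,3,3,3$: blocks containing both points, only $a$, only $b$, or neither. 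Every orbit therefore has size at most $3<6$, so there is no regular orbit on blocks. Your planned computation for $(H_1,H_1,H_2)$ would detect this, but the proposal then asserts the opposite.
\item[(c)] As a consequence, for $q=11$ the mixed $4$-tuple $(H_1,H_1,H_2,H_2)$ has no regular sub-triple. Its regularity, which part (ii) requires, needs its own check, and your argument does not supply one.
\end{itemize}

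\textbf{Class bookkeeping.} You flag this yourself as the main risk, and it is also wrong. For $q=p\in\{11,19\}$ there is no maximal subgroup of ${\rm PGL}_2(q)$ in $\mathcal{S}$. By Dickson's classification, ${\rm PGL}_2(q)$ contains no $S_5\cong{\rm PGL}_2(5)$ when $p\neq 5$. So $A_5$ is self-normalising there, its two ${\rm L}_2(q)$-classes fuse, and $G=G_0$ is forced, as the paper states. Your claimed single class $H=S_5\leqslant{\rm PGL}_2(q)$ with $b(G,H)=3$ does not exist. If it did, it would give non-regular pairs that are absent from Table \ref{tab:2}, contradicting the statement you are proving. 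Similarly, for $q=p^2$ and $G=G_0$ there are two classes, not one, although this does not matter here.

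\textbf{Minor slip.} For $q=19$ you say the order argument gives nothing, but $|H_1||H_2|=3600>3420=|G|$. So the order argument does settle the non-conjugate pair, exactly as for $q=11$, and no computation is needed.
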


\begin{proof}
Here $G_0 = {\rm L}_2(q)$ and we may assume $q \in \{11,19,29,31\}$, so $G = G_0$ and $H = A_5$ (two classes). If $q \in \{29,31\}$, then we can use {\sc Magma} to check that every pair of subgroups in $\mathcal{S}$ is regular. On the other hand, if $q \in \{11,19\}$ then every pair of subgroups in $\mathcal{S}$ is non-regular. In addition, if $q = 19$ then every triple is regular, whereas $(H,H,K)$ and $(H,K,K)$ are the only non-regular triples when $q =11$ (and every $4$-tuple is regular).
\end{proof}

\vs

This completes the proof of Theorem \ref{t:main}.

\section{The tables}\label{s:tables}

\setcounter{table}{0}
\renewcommand{\thetable}{\Alph{table}}

In this final section, we present Tables \ref{tab:3} and \ref{tab:2}.

\begin{rem}\label{r:tables}
Let us record some comments on the set-up and notation we use in Tables \ref{tab:3} and \ref{tab:2}.
\begin{itemize}\addtolength{\itemsep}{0.2\baselineskip}
\item[{\rm (a)}] Firstly, recall that each tuple $\tau$ is recorded up to re-ordering and conjugacy in $G$.
\item[{\rm (b)}] If $G$ has $\ell$ conjugacy classes of maximal subgroups in $\mathcal{S}$ isomorphic to a fixed almost simple group $H$, then we use the notation $H_1, \ldots, H_{\ell}$ for a set of representatives of these classes.
\item[{\rm (c)}] In the third row of Table \ref{tab:3}, we have $G_0 = {\rm P\O}_8^{+}(3)$ and $G = G_0.2_2$ denotes an extension of $G_0$ by an involutory graph automorphism of type $(-I_1,I_7)$, which is an index-two subgroup of ${\rm PO}_8^{+}(3) = G_0.2^2$. This notation is consistent with \cite{GAPCTL}. 
\item[{\rm (d)}] Similarly, for $H_0 = {\rm U}_4(3)$ we write $H_0.2_1$ for the unique index-two subgroup of ${\rm PGU}_4(3)$, and we use $H_0.2_2$ and $H_0.2_3$ to denote extensions of the form $H_0.\la \gamma\ra$, where $\gamma$ is an involutory graph automorphism of $H_0$. In the first case, the centraliser of $\gamma$ is of type ${\rm Sp}_4(3)$, while in the second it is of type ${\rm SO}_4^{-}(3)$. Similarly, ${\rm Aut}(H_0)$ has two conjugacy classes of subgroups of the form $H_0.2^2$, with representatives denoted by $H_0.(2^2)_{122}$ and $H_0.(2^2)_{133}$; here the former subgroups contain symplectic-type graph automorphisms, while the latter do not.
\item[{\rm (e)}] Let $H_0 = {\rm L}_3(4)$. Up to conjugacy in ${\rm Aut}(H_0)$, there are three almost simple groups of the form $H_0.2$, with representatives denoted by $H_0.2_i$ for $i=1,2,3$. Here $H_0.2_1$ and $H_0.2_2$ contain involutory graph-field and field automorphisms, respectively, while $H_0.2_3$ contains involutory graph automorphisms.
\end{itemize}
\end{rem}

\begin{table}
\[
\begin{array}{lll} \hline
G & \tau  & \mbox{Conditions} \\ \hline
\O_{10}^{-}(2).c & (H,H,H) & H = A_{12}.c,\, c \in \{1,2\} \\
{\rm P\O}_{8}^{+}(3) & (H_i,H_j,H_k) & H = \O_8^{+}(2), \,  i,j,k \in \{1,2,3,4\},\, i \ne j \\
{\rm P\O}_{8}^{+}(3).2_2 & (H_i, H_j, H_k) & H ={\rm O}_8^{+}(2),\, i,j,k \in \{1,2\} \\
{\rm P\O}_{8}^{+}(3).X & (H,H,H) & H = \O_8^{+}(2).X,\, 
X \in \{C_3,S_3\} \\
\O_{8}^{+}(2) & (H_i,H_i,H_i) & H = A_9, \, i \in \{1,2,3\} \\
{\rm O}_8^{+}(2) & (H,H,H) & H = S_9 \\
\O_7(3) & (H_i,H_j,K_k) & H = G_2(3),\, K = {\rm Sp}_6(2), \, i,j,k \in \{1,2\} \\ 
& (H_i,H_i,K_i) & H = G_2(3), \, K = S_9, \, i \in \{1,2\} \\  
& (H_i,K_1,K_2) & H = G_2(3),\, K = {\rm Sp}_6(2), \, i \in \{1,2\} \\
& (H_i,K_i,K_i) & H = G_2(3),\, K = {\rm Sp}_6(2), \, i \in \{1,2\} \\
& (H_i,H_j,H_j) & H = {\rm Sp}_6(2),\, i,j \in \{1,2\},\, i \ne j \\ 
{\rm U}_6(2) & (H_i, H_j, H_k) & H = {\rm U}_4(3).2_2,\,  i,j,k \in \{1,2,3\} \\
& (H_i, H_j, K_i) & H = {\rm U}_4(3).2_2,\, K = {\rm M}_{22},\, (i,j) = (1,2),(2,3),(3,1) \\
{\rm U}_6(2).2 & (H, H, K) & H = {\rm U}_4(3).(2^2)_{122}, \, K \in \{{\rm U}_4(3).(2^2)_{122}, {\rm M}_{22}.2\} \\
{\rm Sp}_6(2) & (H,H,H) & H = {\rm U}_3(3).2 \\
{\rm U}_4(3) & (H_i,H_j,H_k) & H = {\rm L}_3(4), \, i,j,k \in \{1,2\} \\
{\rm U}_4(3).2_1 & (H_i,H_j,H_k) & H = {\rm L}_3(4).2_2,\, i,j,k \in \{1,2\} \\
{\rm U}_4(3).2_3 & (H_i,K_j,L_k) & H,K,L \in \{ {\rm L}_3(4).2_1, {\rm L}_3(4).2_3\},\, i,j,k \in \{1,2\} \\
{\rm U}_4(3).(2^2)_{133} & (H_i,H_j,H_k) & H = {\rm L}_3(4).2^2,\, i,j,k \in \{1,2\}  \\
{\rm L}_3(4).2^2 & (H,H,H) & H = A_6.2^2 \\
{\rm U}_3(5) & (H_i,H_j,H_k) & H = A_7,\,  i,j,k \in \{1,2,3\} \\
& (H_i,H_j,K_k) & H = A_7, \, K = {\rm M}_{10}, \, i,j,k \in \{1,2,3\},\, (i,j) \ne (k,k) \\
{\rm U}_3(5).2 & (H,H,H) & H = S_7 \\
 & (H,H,K) & H = S_7, \, K = A_6.2^2 \\
{\rm U}_3(3).2 & (H,H,H) & H = {\rm PGL}_2(7)  \\
{\rm L}_2(11) & (H_1,H_2,H_i) & H = A_5,\, i \in\{1,2\} \\
\hline
\end{array}
\]
\caption{Non-regular triples}
\label{tab:3}
\end{table}

\begin{table}
\[
\begin{array}{lll} \hline
G & \tau &  \\ \hline
{\rm O}_{14}^{+}(2) & (H,H) & H = S_{16} \\
\O_{12}^{-}(2).c & (H,H) & H = A_{13}.c, \, c \in \{1,2\} \\
\O_{10}^{-}(2) & (H,H) & H = A_{12} \\
{\rm O}_{10}^{-}(2) & (H,K) & H = S_{12}, \, K \in \{S_{12},{\rm M}_{12}.2\} \\
{\rm Sp}_8(2) & (H,H) & H = S_{10} \\
{\rm P\O}_{8}^{+}(3) & (H_i,H_j) & H =\O_8^{+}(2), \, i,j \in \{1,2,3,4\} \\
{\rm P\O}_{8}^{+}(3).2_2 & (H_i,H_j) & H = {\rm O}_8^{+}(2),\, i,j \in \{1,2\} \\
{\rm P\O}_{8}^{+}(3).X & (H,H) & H = \O_8^{+}(2).X,\, 
X \in \{C_3,S_3\} \\
\O_{8}^{+}(2) & (H_i,H_j) & H = A_9, \, i,j \in \{1,2,3\} \\
{\rm O}_8^{+}(2) & (H,H) & H = S_9 \\
\O_7(q) & (H_i,K_j) & H,K \in \{ G_2(3), {\rm Sp}_6(2) \}, \, i,j \in \{1,2\},\, q \in \{5,7\} \\
\O_7(3) & (H_i,K_j) & H,K \in \{ G_2(3), {\rm Sp}_6(2), S_9 \},\,i,j \in \{1,2\}  \\
{\rm U}_6(2) & (H_i,K_j) & H,K \in \{ {\rm U}_4(3).2_2, {\rm M}_{22} \},\, i,j \in \{1,2,3\} \\
{\rm U}_6(2).2 & (H,K) & H,K \in \{ {\rm U}_4(3).(2^2)_{122}, {\rm M}_{22}.2 \} \\
{\rm U}_4(3) & (H_i,H_j) & H = {\rm L}_3(4),\, i,j \in \{1,2\} \\
& (H_i,H_j) & H = A_7,\,  i,j \in \{1,2,3,4\} \\
& (H_i,K_j) & H ={\rm L}_3(4), \, K = A_7, \, i \in \{1,2\},\,j \in \{1,2,3,4\} \\
{\rm U}_4(3).2_1 & (H_i,H_j) & H = {\rm L}_3(4).2_2,\,  i,j \in \{1,2\} \\
{\rm U}_4(3).2_2 & (H_i,H_j) & H = S_7, \, i,j \in \{1,2\} \\
{\rm U}_4(3).2_3 & (H,K) & H,K \in \{ {\rm L}_3(4).2_1, {\rm L}_3(4).2_3 \} \\
{\rm U}_4(3).(2^2)_{133} & (H_i,H_j) &  H = {\rm L}_3(4).2^2, \, i,j \in \{1,2\} \\
{\rm L}_3(4) & (H_i,H_j) & H = A_6,\, i,j \in \{1,2,3\} \\
{\rm L}_3(4).2_1 & (H_i,H_j) & H = {\rm M}_{10},\, i,j \in \{1,2,3\} \\
{\rm L}_3(4).2_2 & (H,H) & H = S_6 \\
{\rm L}_3(4).2_3 & (H,H) & H = {\rm PGL}_2(9) \\
{\rm L}_3(4).2^2 & (H,H) & H = A_6.2^2 \\
{\rm U}_3(5) & (H_i,K_j) & H,K \in \{{\rm M}_{10},A_7\},\, i,j \in \{1,2,3\} \\
{\rm U}_3(5).2 & (H,K) & H,K \in \{A_6.2^2, S_7, {\rm PGL}_2(7)\} \\
{\rm U}_3(3) & (H,H) & H = {\rm L}_2(7) \\
{\rm U}_3(3).2 & (H,H) & H = {\rm PGL}_2(7) \\
{\rm L}_2(q) & (H_i,H_j) & H = A_5, \, i,j \in \{1,2\},\, q \in \{11,19\} \\
\hline
\end{array}
\]
\caption{Non-regular pairs}
\label{tab:2}
\end{table}

\end{document}